\documentclass[11pt,twoside]{amsart}

\usepackage[english]{babel}
\usepackage{csquotes}
\usepackage{amsfonts}
\usepackage{hyperref}
\usepackage{etoolbox}
\usepackage{amsmath}
\usepackage{amssymb}
\usepackage{amsthm}
\usepackage{stmaryrd}
\usepackage{dsfont}
\usepackage{pifont}
\usepackage{caption}
\usepackage{subcaption}
\usepackage{mathrsfs}
\usepackage{graphicx}
\usepackage{enumerate}
\usepackage[all]{xy}
\usepackage{enumitem}
\usepackage{geometry}
\usepackage{amsfonts}
\usepackage{fancyhdr}
\usepackage{calc}
\usepackage{cancel}
\usepackage{accents}
\usepackage{abraces}
\usepackage{amsaddr}
\usepackage[new]{old-arrows}
\usepackage{tikz,tikz-3dplot}
\usetikzlibrary{decorations.markings}
\usetikzlibrary{shapes,arrows,plotmarks}
\usepackage[backend=biber,style=alphabetic,sorting=nty,maxnames=50]{biblatex}

\begin{filecontents*}{bibliography.bib}
@PROCEEDINGS{REU2019,
  INSTITUTION = {Univ. of Chicago},
  BOOKTITLE = {Proceedings of the REU},
  DATE = {2019},
}

@ARTICLE{babson-reiner,
  AUTHOR = {Babson, E. and Reiner, V.},
  DATE = {2004},
  DOI = {10.46298/dmtcs.312},
  JOURNALTITLE = {DMTCS},
  PAGES = {223--252},
  TITLE = {Coxeter-like complexes},
  VOLUME = {6},
}

@ARTICLE{billey2016,
  AUTHOR = {Billey, S. C. and Konvalinka, M. and Petersen, T. K. and Slofstra, W. and Tenner, B. E.},
  DATE = {2018},
  DOI = {10.37236/6741},
  ISSUE = {1},
  JOURNALTITLE = {The Electronic Journal of Combinatorics},
  TITLE = {Parabolic double cosets in {C}oxeter groups},
  VOLUME = {25},
  SHORTHAND = {BKP+16},
}

@BOOK{bourbaki456,
  AUTHOR = {Bourbaki, N.},
  PUBLISHER = {Springer},
  DATE = {2002},
  SERIES = {Elements of Mathematics},
  TITLE = {Lie groups and {L}ie algebras, chapters 4, 5, 6},
}

@BOOK{brown_buildings,
  AUTHOR = {Brown, K. S.},
  PUBLISHER = {Springer},
  DATE = {1989},
  TITLE = {Buildings},
}

@ARTICLE{chapovalov,
  AUTHOR = {Chapovalov, M. and Leites, D. and Stekolshchik, R.},
  PUBLISHER = {Atlantis Press},
  DATE = {2010},
  DOI = {10.1142/S1402925110000842},
  JOURNALTITLE = {J. of nonlinear Math. Physics},
  PAGES = {169--215},
  TITLE = {The {P}oincar\'{e} series of the hyperbolic {C}oxeter groups with finite volume of fundamental domains},
  VOLUME = {17},
}

@ARTICLE{chein,
  AUTHOR = {Chein, M.},
  DATE = {1969},
  JOURNALTITLE = {R. I. R. O},
  PAGES = {3--16},
  SERIES = {S\'{e}rie rouge},
  TITLE = {Recherche des graphes de matrices de {C}oxeter hyperboliques d'ordre au plus 10},
  VOLUME = {3},
}

@ARTICLE{chirivi-garnier-spreafico,
  AUTHOR = {Chiriv\`{i}, R. and Garnier, A. and Spreafico, M.},
  DATE = {2022},
  DOI = {https://doi.org/10.1016/j.exmath.2022.02.001},
  JOURNALTITLE = {Expositiones Mathematicae},
  NUMBER = {3},
  PAGES = {572--604},
  TITLE = {Cellularization for exceptional spherical space forms and the flag manifold of ${SL}_3(\mathbb{R})$},
  VOLUME = {40},
}

@ARTICLE{salvetti,
  AUTHOR = {Concini, C. De and Salvetti, M.},
  DATE = {2000},
  DOI = {10.4310/MRL.2000.v7.n2.a7},
  JOURNALTITLE = {Math. Research Letters},
  PAGES = {213--232},
  TITLE = {Cohomology of {C}oxeter groups and {A}rtin groups},
  VOLUME = {7},
}

@ARTICLE{davis_hyperbolic,
  AUTHOR = {Davis, M. W.},
  DATE = {1985},
  JOURNALTITLE = {Proc. of the AMS},
  TITLE = {A hyperbolic 4-manifold},
  VOLUME = {93},
}

@BOOK{tomdieck_transgroups,
  AUTHOR = {tom Dieck, T.},
  PUBLISHER = {Walter de Gruyter},
  DATE = {1987},
  SERIES = {De Gruyter studies in Mathematics},
  TITLE = {Transformation groups},
}

@ARTICLE{dyer90,
  AUTHOR = {Dyer, M.},
  DATE = {1990},
  DOI = {10.1016/0021-8693(90)90149-I},
  JOURNALTITLE = {Journal of Algebra},
  PAGES = {57--73},
  TITLE = {Reflection subgroups of {C}oxeter groups},
  VOLUME = {135},
}

@MANUAL{GAP4,
  ORGANIZATION = {The GAP~Group},
  URL = {https://www.gap-system.org},
  DATE = {2021},
  TITLE = {{GAP -- Groups, Algorithms, and Programming, Version 4.11.1}},
  SHORTHAND = {GAP21},
}

@BOOK{geck-pfeiffer,
  AUTHOR = {Geck, M. and Pfeiffer, G.},
  PUBLISHER = {Clarendon Press},
  DATE = {2000},
  SERIES = {London Math. Society monographs},
  TITLE = {Characters of finite {C}oxeter groups and {I}wahori--{H}ecke algebras},
  VOLUME = {21},
}

@MISC{hain_lectures,
  AUTHOR = {Hain, R.},
  DATE = {2014},
  EPRINT = {0812.1803},
  EPRINTCLASS = {math.AG},
  EPRINTTYPE = {arXiv},
  TITLE = {Lectures on Moduli Spaces of Elliptic Curves},
}

@BOOK{hatcher,
  AUTHOR = {Hatcher, A.},
  PUBLISHER = {Cambridge University Press},
  DATE = {2002},
  TITLE = {Algebraic Topology},
}

@BOOK{hiller_coxeter,
  AUTHOR = {Hiller, H.},
  PUBLISHER = {Pitman Advanced Publishing Program},
  DATE = {1982},
  TITLE = {Geometry of {C}oxeter groups},
}

@BOOK{humphreys-reflectiongroups,
  AUTHOR = {Humphreys, J. E.},
  PUBLISHER = {Cambridge University Press},
  DATE = {1992},
  SERIES = {Cambridge studies in advanced Mathematics},
  TITLE = {Reflection groups and {C}oxeter groups},
}

@BOOK{jones-wolfart,
  AUTHOR = {Jones, G. A. and Wolfart, J.},
  PUBLISHER = {Springer},
  DATE = {2016},
  SERIES = {Springer Monographs in Mathematics},
  TITLE = {Dessins d'Enfants on {R}iemann surfaces},
}

@BOOK{jost-riemann,
  AUTHOR = {Jost, J.},
  PUBLISHER = {Springer},
  DATE = {2002},
  EDITION = {Third Edition},
  SERIES = {Universitext},
  TITLE = {Compact {R}iemann surfaces},
}

@MISC{kirillovandjunior,
  AUTHOR = {Kirillov, A. and Jr, A. Kirillov},
  DATE = {2005},
  EPRINT = {math/0506118},
  EPRINTCLASS = {math.RT},
  EPRINTTYPE = {arXiv},
  TITLE = {Compact groups and their representations},
}

@INPROCEEDINGS{lin_lefschetz,
  AUTHOR = {Lin, E.},
  URL = {https://math.uchicago.edu/~may/REU2019/REUPapers/Lin,Edgar.pdf},
  CROSSREF = {REU2019},
  TITLE = {An overview and proof of the {L}efschetz fixed-point theorem},
}

@SOFTWARE{maple,
  AUTHOR = {Maplesoft},
  LOCATION = {Waterloo, Ontario},
  URL = {https://www.maplesoft.com/},
  DATE = {2019-10-16},
  TITLE = {Maple},
  VERSION = {17.00},
}

@MISC{martelli_hyperbolic,
  AUTHOR = {Martelli, B.},
  DATE = {2015},
  EPRINT = {1512.03661},
  EPRINTCLASS = {math.GT},
  EPRINTTYPE = {arXiv},
  TITLE = {Hyperbolic four-manifolds},
}

@ARTICLE{patera-twarock,
  AUTHOR = {Patera, J. and Twarock, R.},
  PUBLISHER = {IOP Publishing},
  DATE = {2002},
  DOI = {10.1088/0305-4470/35/7/306},
  JOURNALTITLE = {Journal of Physics A: Mathematical and General},
  NUMBER = {7},
  PAGES = {1551--1574},
  TITLE = {Affine extension of noncrystallographic {C}oxeter groups and quasicrystals},
  VOLUME = {35},
}

@ARTICLE{qi_thesis,
  AUTHOR = {Qi, D.},
  DATE = {2007},
  DOI = {10.4064/fm193-1-5},
  JOURNALTITLE = {Fundamenta Mathematicae},
  PAGES = {79--93},
  TITLE = {On irreducible, infinite, non-affine {C}oxeter groups},
  VOLUME = {193},
}

@BOOK{ratcliffe_hyperbolic,
  AUTHOR = {Ratcliffe, J. G.},
  PUBLISHER = {Springer},
  DATE = {2006},
  EDITION = {Second Edition},
  SERIES = {Graduate Texts in Mathematics},
  TITLE = {Foundations of hyperbolic manifolds},
  VOLUME = {149},
}

@ARTICLE{ratcliffe-tschantz,
  AUTHOR = {Ratcliffe, J. G. and Tschantz, S. T.},
  DATE = {2001},
  DOI = {10.1016/S0166-8641(99)00221-7},
  JOURNALTITLE = {Topology and its Applications},
  PAGES = {327--342},
  TITLE = {On the {D}avis hyperbolic 4-manifold},
  VOLUME = {111},
}

@BOOK{serre_ari,
  AUTHOR = {Serre, J. P.},
  PUBLISHER = {Presses Universitaires de France},
  DATE = {1970},
  TITLE = {Cours d'arithmétique},
}

@BOOK{spanier,
  AUTHOR = {Spanier, E. H.},
  PUBLISHER = {McGraw-Hill},
  DATE = {1981},
  TITLE = {Algebraic topology},
}

@BOOK{zelobenko,
  AUTHOR = {\v{Z}elobenko, D. P.},
  PUBLISHER = {AMS},
  DATE = {1973},
  SERIES = {Translations of Mathematical Monographs},
  TITLE = {Compact {L}ie groups and their representations},
  VOLUME = {40},
}

@ARTICLE{zimmermann_maximally_symmetric,
  AUTHOR = {Zimmermann, B.},
  DATE = {1992},
  DOI = {10.1016/0166-8641(92)90161-R},
  JOURNALTITLE = {Topology and its Applications},
  PAGES = {263--274},
  TITLE = {Finite group actions on handlebodies and equivariant {H}eegaard genus for 3-manifolds},
  VOLUME = {43},
}

@ARTICLE{zimmermann_hyperbolic,
  AUTHOR = {Zimmermann, B.},
  DATE = {1993},
  DOI = {10.1017/S0305004100075782},
  JOURNALTITLE = {Math. Proc. Camb. Phil. Soc.},
  PAGES = {87--90},
  TITLE = {On a hyperbolic 3-manifold with some special properties},
  VOLUME = {113},
}
@ARTICLE{davis-okun,
  AUTHOR = {Davis, M. W. and Dymara, J. and Januszkiewicz, T. and Okun, B.},
  TITLE = {Weighted $L^2$-cohomology of {C}oxeter groups},
  JOURNALTITLE = {Geometry and Topology},
  VOLUME = {11},
  DATE = {2007},
  PAGES = {47--138},
  DOI = {10.2140/gt.2007.11.47},
  SHORTHAND = {DDJO07},
}
\end{filecontents*}
\addbibresource{bibliography.bib}

\usepackage{color}
\definecolor{linkColor}{rgb}{0.0,0.0,0.554}
\definecolor{citeColor}{rgb}{0.0,0.0,0.554}
\definecolor{fileColor}{rgb}{0.0,0.0,0.554}
\definecolor{urlColor}{rgb}{0.0,0.0,0.554}
\definecolor{promptColor}{rgb}{0.0,0.0,0.589}
\definecolor{brkpromptColor}{rgb}{0.589,0.0,0.0}
\definecolor{gapinputColor}{rgb}{0.589,0.0,0.0}
\definecolor{gapoutputColor}{rgb}{0.0,0.0,0.0}
\usepackage{fancyvrb}
\usepackage{adjustbox}
\usepackage{helvet}

\definecolor{cof}{RGB}{219,144,71}
\definecolor{pur}{RGB}{186,146,162}
\definecolor{greeo}{RGB}{91,173,69}
\definecolor{greet}{RGB}{52,111,72}

\tdplotsetmaincoords{70}{165}

\newcommand{\changefont}{%
    \fontsize{8}{8}\selectfont
}
\pagestyle{fancy}
\fancyhead{}
\fancyfoot{}

\fancyhead[CE]{\changefont \leftmark}
\fancyhead[CO]{\changefont \rightmark}
\fancyfoot[LE,RO]{\changefont \thepage}

\newcommand{\actleft}{\rotatebox[origin=c]{-90}{$\circlearrowright$}}

\mathchardef\mhyphen="2D 

\title[Equivariant triangulations of tori and extension to Coxeter groups]{Equivariant triangulations of tori of compact Lie groups and hyperbolic extension to non-crystallographic Coxeter groups}

\author{Arthur Garnier}
\address{LAMFA, Universit\'e de Picardie Jules Verne, CNRS UMR 7352, \\33, rue Saint-Leu, 80000, Amiens, France.}
\email{arthur.garnier@math.cnrs.fr}

\theoremstyle{plain}
\newtheorem{prop}{Proposition}[subsection]
\newtheorem{prop-def}[prop]{Proposition-Definition}

\newtheorem{lem}[prop]{Lemma}
\newtheorem{theo}[prop]{Theorem}
\newtheorem{cor}[prop]{Corollary}
\newtheorem{rem}[prop]{Remark}
\newtheorem{definition}[prop]{Definition}
\newtheorem{exemple}[prop]{Example}

\newtheorem*{prop*}{Proposition}
\newtheorem*{prop-def*}{Proposition-Definition}
\newtheorem*{propri*}{Property}
\newtheorem*{lem*}{Lemma}
\newtheorem*{theo*}{Theorem}
\newtheorem*{cor*}{Corollary}
\newtheorem*{rem*}{Remark}
\newtheorem*{definition*}{Definition}
\newtheorem*{exemple*}{Example}
\newtheorem*{notation*}{Notation}

\newcommand{\lra}{\longrightarrow}

\newcommand{\ra}{\rightarrow}
\newcommand{\sdp}{\times\kern-.2em\vrule height1.1ex depth-.05ex}
\newcommand{\epi}{\lra \kern-.8em\ra}
\newcommand{\C}{{\mathbb C}}

\newcommand{\Q}{{\mathbb Q}}
\newcommand{\N}{{\mathbb N}}
\newcommand{\R}{{\mathbb R}}

\newcommand{\Z}{{\mathbb Z}}

\DeclareMathOperator{\ho}{Hom}

\DeclareMathOperator{\ima}{im}

\DeclareMathOperator{\codim}{codim}

\DeclareMathOperator\conv{conv}
\DeclareMathOperator\vertices{vert}

\DeclareMathOperator{\tr}{tr}

\newcommand{\longto}{\longrightarrow}
\newcommand{\Sph}{\mathbb{S}}

\newcommand{\Sym}{\mathfrak{S}}

\DeclareRobustCommand\longtwoheadrightarrow
     {\relbar\joinrel\twoheadrightarrow}

\setlength{\textwidth}{15.3cm} \setlength{\textheight}{24.2cm}
\setlength{\topmargin}{-1.5cm} \setlength{\oddsidemargin}{-1mm}
\setlength{\evensidemargin}{-1mm}
\setlength{\abovedisplayskip}{3mm}
\setlength{\belowdisplayskip}{3mm}
\setlength{\abovedisplayshortskip}{0mm}
\setlength{\belowdisplayshortskip}{2mm} \normalbaselines
\raggedbottom

\makeatletter
\newlength\@SizeOfCirc%
\newcommand{\CircleArrowRight}[1]{%
    \setlength{\@SizeOfCirc}{\maxof{\widthof{#1}}{\heightof{#1}}}%
    \tikz [x=1.0ex,y=1.0ex,line width=.12ex]%
        \draw [->,anchor=center]%
            node (0,0) {#1}%
            (0,0.8\@SizeOfCirc) arc (85:-240:0.8\@SizeOfCirc);%
}%
\newcommand{\CircleArrowLeft}[1]{%
    \setlength{\@SizeOfCirc}{\maxof{\widthof{#1}}{\heightof{#1}}}%
    \tikz [x=1.0ex,y=1.0ex,line width=.12ex]%
        \draw [<-,anchor=center]%
            node (0,0) {#1}%
            (0,0.8\@SizeOfCirc) arc (85:-240:0.8\@SizeOfCirc);%
}%
\makeatother

\subjclass[2010]{Primary 57R91, 57Q15, 20F55, 20F67; Secondary 22E99, 52B70, 55U10, 57M60}

\date{\today}

\begin{document}

\begin{abstract}
Given a simple connected compact Lie group $K$ and a maximal torus $T$ of $K$, the Weyl group $W=N_K(T)/T$ naturally acts on $T$.

First, we use the combinatorics of the (extended) affine Weyl group to provide an explicit $W$-equivariant triangulation of $T$. We describe the associated $W$-dg-ring.

For a non-crystallographic Coxeter group $W$, using compact hyperbolic extensions rather than affine ones, we construct a compact $W$-manifold $\mathbf{T}(W)$, which is an analogue of a torus for $W$. We exhibit a $W$-equivariant triangulation of $\mathbf{T}(W)$ and compute the associated $W$-dg-ring. Also, we derive its homology representation.
\end{abstract}

\maketitle

\setcounter{section}{-1}
\section{Introduction}
 
\indent Let $K$ be a simple compact Lie group, $T<K$ a maximal torus and $W:=N_K(T)/T$ be the Weyl group. Our first aim is to provide an explicit $W$-equivariant triangulation of $T$ and to describe the associated cellular homology cochain complex, as a $W$-dg-ring. This study is part of a program of construction of $W$-equivariant cellular structures in Lie theory, and more precisely for the flag manifold $K/T$ (see \cite{chirivi-garnier-spreafico} for the case of $SL_3(\R)/B$) and the classifying space $B_T$ of $T$. Then, we will look at a generalization to non-crystallographic Coxeter groups: more precisely, to a non-crystallographic Coxeter group $W$, we associate a compact $W$-manifold $\mathbf{T}(W)$, which is an analogue of a torus for $W$.

Let $\mathfrak{t}$ denote the Lie algebra of $T$. The exponential map $\mathfrak{t} \to T$ induces a $W$-equivariant isomorphism of Lie groups $\mathfrak{t}/\Lambda \to T$, where $\Z\Phi^\vee\subset\Lambda\subset\mathfrak{t}$ is a $W$-lattice. The situation is simpler when $\pi_1(K)=1$ (i.e. $\Lambda=\Z\Phi^\vee$) and we summarize our first result as follows:

\begin{theo*}[\ref{cupproductforT}]
Let $K$ be a simply-connected simple compact Lie group, $T<K$ be a maximal torus and $W=N_K(T)/T$ be the associated Weyl group. If $W_{\rm{a}}$ denotes the affine Weyl group, then the fundamental alcove induces a $W_{\mathrm{a}}$-equivariant triangulation of the Lie algebra $\mathfrak{t}$ of $T$, whose $W_{\mathrm{a}}$-dg-ring $C^*_{\rm{cell}}(\mathfrak{t},W_{\mathrm{a}};\Z)$ is described in terms of parabolic cosets. This induces a $W$-equivariant triangulation of $T$ and the associated $W$-dg-ring is given by
\[C^*_{\rm{cell}}(T,W;\Z)=\mathrm{Def}^{W_{\mathrm{a}}}_W(C^*_{\rm{cell}}(\mathfrak{t},W_{\mathrm{a}};\Z)),\]
where $\mathrm{Def}^{W_{\mathrm{a}}}_W : \Z[W_{\mathrm{a}}]\mhyphen\mathbf{dgRing}\to\Z[W]\mhyphen\mathbf{dgRing}$ is the deflation functor.
\end{theo*}

In particular, if $P$ denotes the weight lattice associated to $(K,T)$, then we retrieve that
\[H^*(C^*_{\rm{cell}}(T,W;\Z))=H^*(T,\Z)=\Lambda^*(P).\]
In the general case, the \emph{extended affine Weyl group} $W_\Lambda:=\Lambda\rtimes W$ is no longer a Coxeter group and the above combinatorics does not hold anymore. This comes from the non-trivial symmetries of the fundamental alcove in the group. However, it is enough to consider the barycentric subdivision of the fundamental alcove (see Theorem \ref{complexforY}). Though heavy in computations, this has the advantage of giving a general statement for all cases at once. Moreover, this construction applied to the simply-connected case gives the same complex as the first one, up to $\Z[W]$-homotopy equivalence.

It is a natural question to ask if there still is any geometric information behind the above complex in the non-crystallographic Coxeter groups. Our main result is the following one:

\begin{theo*}[\ref{defsteinbergtorus}, \ref{T(I2(m))hyperbolicsurface} and \ref{definableoverQbar}]
Let $(W,S)$ be a finite irreducible Coxeter system of rank $n$. For any fixed reflection $r\in W$, we may consider the extended Coxeter system $(\widehat{W},\widehat{S})$, where $\widehat{S}:=S\cup\{\widehat{s}\}$ and $m_{\widehat{s},s}=m_{s,\widehat{s}}$ is the order of $rs\in W$ for $s\in S$. 

\begin{enumerate}
\item If $W$ is crystallographic, fix a semisimple simply-connected root datum $\mathcal{R}$ with Weyl group $W$, consider the reflection $r_W:=r_{\widetilde{\alpha}}$ with $\widetilde{\alpha}$ the highest (long) root of $\mathcal{R}$ and let $G$ be the (simply-connected) compact Lie group with root datum $\mathcal{R}$. The corresponding extension $\widehat{W}$ is affine.
\item Otherwise, choose a reflection $r_W\in W$ such that the extension $\widehat{W}$ is compact hyperbolic (see \ref{cpcthypextsofnoncryst}).
\end{enumerate}

In either case, we let $\pi : \widehat{W}\twoheadrightarrow W$ be the projection map, sending $s\in S$ to itself, and the additional simple reflection $\widehat{s}$ to $r_W$. Let also $\widehat{\Sigma}$ be the Coxeter complex of $\widehat{W}$ and $Q:=\ker\pi$. Then $\mathbf{T}(W):=\widehat{\Sigma}/Q$ is a connected, orientable, compact, $W$-triangulated Riemannian $W$-manifold of dimension $n$. Moreover, in the first case, $\mathbf{T}(W)$ is $W$-isometric to a maximal torus of $G$ and in the second case, the manifold $\mathbf{T}(W)$ is hyperbolic.

In the dihedral case, the surfaces $\mathbf{T}(I_2(2g+1))$, $\mathbf{T}(I_2(4g)$ and $\mathbf{T}(I_2(4g+2))$ are naturally Riemann surfaces of genus $g$, definable over $\overline{\Q}$ and rational elliptic curves if $g=1$.
\end{theo*}

\begin{rem*}\label{unicity}
The extension $\widehat{W}$ thus comes with a torsion-free normal subgroup $Q\unlhd\widehat{W}$ such that $\widehat{W}=Q\rtimes W$. We have $Q=\Z\Phi^\vee$ if $W$ is Weyl and a non-commutative analogue otherwise. A key fact is that the action of $Q$ (under the dual geometric representation of $W$) on the \emph{Coxeter complex} $\Sigma(\widehat{W})$ (seen as a quotient of the Tits cone) is a covering space action and we naturally define $\mathbf{T}(W):=\Sigma(\widehat{W})/Q$.

Regarding the unicity of the possible extensions and suitable associated reflections, we can say the following. First, if $W$ is crystallographic and simply-laced, then there is a unique reflection for which the corresponding extension is affine. In the other crystallographic cases, there is a reflection yielding an affine extension for each choice of root lengths in a root system for $W$, i.e. for each choice of a simply-connected root datum of the same type as $W$. We get two different extensions for $W=B_n=C_n$ with a unique suitable reflection for each and a unique extension for $F_4$ and $G_2$, each one of which being realized by two reflections.

In types $H_3$ and $H_4$, there is a unique reflection giving a compact hyperbolic extension. For the dihedral groups, the situation is more delicate; see the Remark \ref{unicity_dihedrals} for more details.
\end{rem*}

It should be mentioned that the manifolds $\mathbf{T}(H_3)$ and $\mathbf{T}(H_4)$ were already constructed respectively by Zimmermann (\cite{zimmermann_hyperbolic}) and by Davis (\cite{davis_hyperbolic}), using a different method.

It is clear from the construction that $\mathbf{T}(W)$ is equipped with a $W$-triangulation (which yields a \emph{dessin d'enfant} when $W=I_2(m)$), whose associated $W$-dg-ring has the same combinatorics as in the Weyl group case (see Theorem \ref{cochaincomplexforT(W)}). We use the \emph{Hopf trace formula} (see lemma \ref{hopfforW}) to describe the homology of $\mathbf{T}(W)$ as a representation of $W$.

\begin{theo*}[\ref{H0andHn}, \ref{PD} and \ref{euler+poincare=love}]
If $W$ is a finite Coxeter group of rank $n$ and $\mathbf{T}(W)$ is the $W$-manifold from the previous theorem, then we have
\[H_0(\mathbf{T}(W),\Z)=\mathds{1}~~\text{and}~~H_n(\mathbf{T}(W),\Z)=\varepsilon,\]
where $\varepsilon$ is the signature representation of $W$. 

The homology $H_*(\mathbf{T}(W),\Z)$ is torsion-free and the Betti numbers of $\mathbf{T}(W)$ are palindromic, meaning that $b_i=b_{n-i}$ for all $i$.

Moreover, the geometric representation of $W$ is a direct summand of $H_1(\mathbf{T}(W),\Bbbk)$, where $\Bbbk$ is a splitting field for $W$ and $H_1(\mathbf{T}(W),\Bbbk)$ is irreducible if and only if $W$ is crystallographic.

Finally, if $W(q)$ (resp. $\widehat{W}(q)$) is the Poincar\'e series of $W$ (resp. of $\widehat{W}$), then
\[\chi(\mathbf{T}(W))=\left.\frac{W(q)}{\widehat{W}(q)}\right|_{q=1}.\]
\end{theo*}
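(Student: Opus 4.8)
The plan is to prove the four assertions in turn, using throughout the single structural fact that drives the whole construction. Since the Coxeter complex $\Sigma(\widehat{W})$ is contractible --- it is the barycentric model of the Tits cone, homeomorphic to $\R^{n}$ in the Euclidean case and to hyperbolic $n$-space $\mathbb{H}^{n}$ in the hyperbolic one --- and $Q$ acts freely, the quotient $\mathbf{T}(W)=\Sigma(\widehat{W})/Q$ is a closed orientable aspherical $n$-manifold with $\pi_{1}(\mathbf{T}(W))\cong Q$. Hence $\mathbf{T}(W)$ is a $K(Q,1)$, and every homological statement can be routed either through the explicit $W$-equivariant integral chain complex of Theorem \ref{cochaincomplexforT(W)} or through the group $Q$, with $W$ acting by the conjugation coming from $\widehat{W}=Q\rtimes W$.

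For the extremal degrees I would argue directly. Connectedness forces $H_{0}(\mathbf{T}(W),\Z)=\Z$ with trivial action, that is $\mathds{1}$. In top degree, orientability gives $H_{n}(\mathbf{T}(W),\Z)=\Z\cdot[\mathbf{T}(W)]$, and an element $w\in W$ sends the fundamental class to $\det(w)[\mathbf{T}(W)]$, the determinant being taken in the geometric representation; as each simple reflection acts on the tangent space as a hyperplane reflection we get $\det(w)=\varepsilon(w)$, so $H_{n}(\mathbf{T}(W),\Z)=\varepsilon$. This is the content of \ref{H0andHn}.

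I would obtain torsion-freeness from the explicit complex of Theorem \ref{cochaincomplexforT(W)}, whose chain groups are free $\Z$-modules indexed by cosets and whose differentials are the combinatorial incidence maps, by checking that its integral homology is free; the palindromy $b_{i}=b_{n-i}$ then follows from Poincar\'e duality (\ref{PD}), which over the field $\Bbbk$ is the $W$-equivariant isomorphism $H_{i}(\mathbf{T}(W),\Bbbk)\cong H^{n-i}(\mathbf{T}(W),\Bbbk)\otimes\varepsilon\cong H_{n-i}(\mathbf{T}(W),\Bbbk)^{*}\otimes\varepsilon$, the twist by $\varepsilon$ being exactly the orientation character found above. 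For the first homology, asphericity identifies $H_{1}(\mathbf{T}(W),\Bbbk)\cong Q^{\mathrm{ab}}\otimes\Bbbk$ as a $W$-module. The linear holonomy of the flat, respectively hyperbolic, structure exhibits the geometric representation $V$ as a constituent of this module, hence as a direct summand by Maschke's theorem over the characteristic-zero splitting field $\Bbbk$. Since $V$ is irreducible for irreducible $W$, the module $H_{1}$ is irreducible exactly when $\dim H_{1}=n$, that is when $b_{1}=n$: in the crystallographic case $\mathbf{T}(W)=\mathfrak{t}/\Lambda$ is a torus and $H_{1}=\Lambda\otimes\Bbbk=V$, whereas in the non-crystallographic cases one has $b_{1}>n$ and $H_{1}$ is reducible.

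Finally, for the Euler characteristic I would invoke asphericity once more: $\chi(\mathbf{T}(W))=\chi(Q)$, and as $Q\unlhd\widehat{W}$ is torsion-free of index $|W|$, multiplicativity of the virtual Euler characteristic gives $\chi(Q)=|W|\,\chi(\widehat{W})$. The growth-series formula for Coxeter groups identifies $\chi(\widehat{W})$ with the value at $q=1$ of the rational function $1/\widehat{W}(q)$ --- which is $0$ in the Euclidean case, recovering $\chi=0$ for the torus, and nonzero in the hyperbolic one --- and since $W(1)=|W|$ this yields $\chi(\mathbf{T}(W))=W(q)/\widehat{W}(q)\big|_{q=1}$, namely \ref{euler+poincare=love}. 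The main obstacle is the exceptional non-crystallographic cases $H_{3}$ and $H_{4}$: there the genus count used for $I_{2}(m)$ is unavailable (indeed $\chi(\mathbf{T}(H_{3}))=0$, since $\mathbf{T}(H_{3})$ is an odd-dimensional closed manifold), so establishing the strict inequality $b_{1}>n$ --- equivalently, that $H_{1}$ strictly contains the geometric representation --- requires computing $b_{1}$ explicitly, either from the chain complex of Theorem \ref{cochaincomplexforT(W)} via the Hopf trace formula (\ref{hopfforW}), or by identifying $\mathbf{T}(H_{3})$ and $\mathbf{T}(H_{4})$ with the hyperbolic manifolds of Zimmermann and Davis.
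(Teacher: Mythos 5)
The serious gap is in your treatment of $H_1$, i.e.\ of the third assertion. In the crystallographic case your ``linear holonomy'' argument is fine: $Q$ acts by translations, so $H_1=Q\otimes\Bbbk\simeq V$. But in the hyperbolic case there is no linear holonomy to invoke: the holonomy of the hyperbolic structure is the inclusion $Q\hookrightarrow\mathrm{Isom}(\mathbb{H}^n)$, whose target is nonabelian, so it does not factor through $Q^{\mathrm{ab}}$, and hyperbolic isometries have no ``translation part'' that could produce a $W$-equivariant linear map relating $Q^{\mathrm{ab}}\otimes\Bbbk$ to the geometric representation $V$ (the paper stresses in Remark \ref{Qisnotabelian} that $Q$ is nonabelian precisely in these cases, by Qi's theorem). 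Since your proof of ``irreducible iff crystallographic'' is exactly ``$V$ is a summand and $b_1>n$'', both the summand claim and the equivalence collapse onto this unproved step. Your fallback --- compute via the Hopf trace formula, Lemma \ref{hopfforW} --- is indeed what the paper does (Theorems \ref{homologyI2}, \ref{homologyH3}, \ref{homologyH4}, using Frobenius reciprocity and, for $H_4$, an elimination among four candidate decompositions), but it is needed for more than the inequality $b_1>n$ to which you assign it: one must identify actual irreducible constituents of $H_1$. In particular a dimension count cannot substitute for this in type $H_4$: there $b_1=24$ and $H_4$ possesses irreducible characters of degree $24$, so even the reducibility of $H_1(\mathbf{T}(H_4),\Bbbk)$ is invisible from Betti numbers alone.

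A second, smaller soft spot is torsion-freeness: ``checking that the integral homology of the explicit complex is free'' restates the claim rather than proving it, and it cannot be a case-by-case verification for the infinite family $I_2(m)$. The paper's route (Proposition \ref{PD}) is to use Poincar\'e duality and universal coefficients to reduce everything to $\mathrm{Tors}(H_1)=0$ --- enough because $n\le 4$ outside the torus case --- and then to prove that $H_1\simeq Q^{\mathrm{ab}}$ is free abelian from the explicit presentation of $Q$ coming from Poincar\'e's fundamental polyhedron theorem (Theorem \ref{presentationQ}, Corollary \ref{abelianizationofQ}), the $I_2(m)$ case being the classical homology of a closed orientable surface (Corollary \ref{T(I2(m))hyperbolicsurface}). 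On the positive side, two of your arguments are correct and genuinely different from the paper's: for $H_n(\mathbf{T}(W),\Z)=\varepsilon$ you use the orientation character ($w$ acts on the fundamental class by $\det\widehat{\sigma}^*(w)=\varepsilon(w)$), where the paper instead computes $\ker(\partial_n)\subset\Z[W]$ by hand in Proposition \ref{H0andHn}; and for the Euler characteristic you combine Wall multiplicativity $\chi(Q)=[\widehat{W}:Q]\,\chi(\widehat{W})$ with Serre's formula $\chi(\widehat{W})=1/\widehat{W}(1)$, which actually explains the identity of Corollary \ref{euler+poincare=love}, whereas the paper only verifies it by comparing both sides computed case by case from Chapovalov's tables.
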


Finally, a GAP4 package\footnote{\url{https://github.com/arthur-garnier/Salvetti-and-tori-complexes}} is provided to compute these complexes, along with the DeConcini-Salvetti complex of a finite Coxeter group (see \cite{salvetti}).

\part{Weyl-equivariant triangulations of tori of compact Lie groups and related $W$-dg-rings}

\section{Prerequisites and notation}

Let $K$ be a simple compact Lie group, $T$ a maximal torus of $K$, we denote by $\mathfrak{k}$ and $\mathfrak{t}$ their respective Lie algebras. The complexification of $\mathfrak{k}$ is the reductive complex Lie algebra $\mathfrak{g}:=\mathfrak{k}+i\mathfrak{k}$ and $\mathfrak{h}:=\mathfrak{t}+i\mathfrak{t}$ is a Cartan subalgebra of $\mathfrak{g}$. Let $\Phi\subset\mathfrak{h}^*$ be the root system of $(\mathfrak{g},\mathfrak{h})$, with simple system $\Pi$, root lattice $Q=\Z\Phi$ and weight lattice $P\supset Q$. Since $\mathfrak{t}=\mathrm{span}_\R(i\alpha^\vee)_{\alpha\in\Pi}$, we have $\Phi\subset i\mathfrak{t}^*=:V$ (see \cite[\S 3.2]{kirillovandjunior} or \cite[\S 103]{zelobenko}) and we consider the \emph{character lattice} of $T$ given by
\[X(T)=\{d\lambda : \mathfrak{t}\to i\R~;~\lambda\in\ho(T,\Sph^1)\}\subset i\mathfrak{t}^*=V\]
and its \emph{cocharacter lattice} is $Y(T):=X(T)^\wedge\subset V^*$, so that $(X(T),\Phi,Y(T),\Phi^\vee)$ is a root datum. Since $T$ is abelian, the elements of the Weyl group $W=W(\Phi)\simeq N_K(T)/T$ act on $T$ by conjugation by a representative element in $N_K(T)$. By \cite[Lemma 1]{kirillovandjunior}, the normalized exponential map defines a $W$-isomorphism of Lie groups
\begin{equation}\label{T=V/Y}\tag{$\dagger$}
V^*/Y(T) \stackrel{\tiny{\sim}}\longrightarrow T.
\end{equation}
Moreover, we have the following isomorphisms
\[P/X(T)\simeq\pi_1(K)~~\text{and}~~X(T)/Q\simeq Z(K).\]

\begin{notation*}
Throughout the first part of the paper we fix an irreducible root datum $(X,\Phi,Y,\Phi^\vee)$ and rank $n$, with ambient space $V=\Z\Phi\otimes \R$, simple roots $\Pi\subset\Phi^+$, Weyl group $W=\left<s_\alpha,~\alpha\in\Pi\right>$, fundamental (co)weights $(\varpi_\alpha)_{\alpha\in\Pi}$ and $(\varpi^\vee_\alpha)_{\alpha\in\Pi}$, (co)root lattices $Q$ and $Q^\vee$ and (co)weight lattices $P$ and $P^\vee$. We index the set $\Pi$ of simple roots by $\{\alpha_1,\dotsc,\alpha_n\}$ and the sets of fundamental (co)weights accordingly. Let also $\alpha_0=\sum_{i=1}^nn_i\alpha_i\in\Phi^+$ be the highest (long) root of $\Phi$.
\end{notation*}

Consider the affine transformation
\[s_0:=\mathrm{t}_{\alpha_0^\vee}s_{\alpha_0} : \lambda \longmapsto s_{\alpha_0}(\lambda)+\alpha_0^\vee=\lambda-(\left<\lambda,\alpha_0\right>-1)\alpha_0^\vee,\]
where $\mathrm{t}_{\alpha_0^\vee} : V^*\to V^*$ is the translation by $\alpha_0^\vee$. The group $W_\mathrm{a}:=\left<s_0,s_1,\dotsc,s_n\right>\le\mathrm{Aff}(V^*)$ is then a Coxeter group, called the \emph{affine Weyl group}. It splits as $W_\mathrm{a}=Q^\vee\rtimes W$. For $\alpha\in\Phi$ and $k\in\Z$, we consider the affine hyperplanes $H_{\alpha,k}:=\{\lambda\in V^*~;~\left<\lambda,\alpha\right>=k\}$ and we call \emph{alcove} any connected component of $V^*\setminus\bigcup_{\alpha,k}H_{\alpha,k}$. The \emph{fundamental alcove} is
\[\mathcal{A}_0:=\{\lambda\in V^*~;~\forall \alpha\in\Phi^+,~0<\left<\lambda,\alpha\right><1\}=\{\lambda\in V^*~;~\forall 1\le i \le n,~\left<\lambda,\alpha_i\right>>0,~\left<\lambda,\alpha_0\right><1\}.\]
Then, by \cite[V, \S 2.2, Corollary]{bourbaki456}, its closure is a standard simplex
\[\overline{\mathcal{A}_0}=\mathrm{conv}\left(\{0\}\cup\left\{{\varpi_i^\vee}/{n_i}\right\}_{1\le i \le n}\right)\simeq\Delta^n\]
and by \cite[\S 4.5 and 4.8]{humphreys-reflectiongroups}, $\overline{\mathcal{A}_0}$ is a fundamental domain for $W_\mathrm{a}$ in $V^*$ and moreover, $W_\mathrm{a}$ acts simply transitively on the set of open alcoves.

Before going any further into our study, we give some reminders and notation on equivariant CW-complexes. A detailed treatment can be found in \cite[II, \S 1]{tomdieck_transgroups}.

Recall that, for a discrete group $G$, a $G$-space $X$ is said to be a $G$\emph{-CW-complex} if it has a CW-complex structure such that $G$ acts on the $k$-cells of $X$ for all $k$ and, for any cell $e$ of $X$ and any $g\in G$, if $ge=e$ then $gx=x$ for every $x\in e$.

Given a CW-complex $X$, we can consider its \emph{cellular homology chain complex} $C^{\text{cell}}_*(X,\Z)$, where $C^{\text{cell}}_n(X,Z)=\bigoplus_{i\in I}\Z e_i$ with $e_i,~i\in I,$ the $n$-cells of $X$. If $X$ is a $G$-CW-complex, then $C^{\text{cell}}_*(X,\Z)$ is a chain complex of $\Z[G]$-modules, which we denote by $C^\mathrm{cell}_*(X,G;\Z)$, or simply $C^\mathrm{cell}_*(X,\Z)$ if the group $G$ is clear by the context. Moreover, if $\mathcal{E}_n$ is the (possibly infinite) set of $n$-cells of $X$, then $G$ acts on $\mathcal{E}_n$ and the $\Z$-module $C_n^\mathrm{cell}(X,\Z)$ is free with basis $\mathcal{E}_n$, so that $C_n^\mathrm{cell}(X,G;\Z)$ is a \emph{permutation module} and decomposing $\mathcal{E}_n=\bigsqcup_i G/H_i$ into orbits yields
\[C_n^\mathrm{cell}(X,G;\Z)\simeq\bigoplus_i \Z[G/H_i],\]
where $H_i$ runs through a representative set of stabilizers of $n$-cells of $X$.

We may describe the dual complex $C^*_{\rm{cell}}(X,G;\Z)$ in a similar way. For an arbitrary set $S$, we denote by $\Z[[S]]$ the set of families $x=(x_s)_{s\in S}$ of integers, indexed by $S$; it is just the $\Z$-module $\Z^S$. It will be convenient to prefer the formal notation $x=\sum_{s\in S}x_ss$. Notice that, for an arbitrary group $G$ and $H\le G$, we have a canonical isomorphism of right $\Z[G]$-modules
\[\begin{array}{ccc}
\Z[G/H]^\vee\stackrel{\tiny{\text{df}}}=\ho(\Z[G/H],\Z) & \longrightarrow & \Z[[H\backslash G]] \\ (gH)^* & \longmapsto & Hg^{-1}\end{array}\]
and this yields an isomorphism $\Z[G/H]^\vee\to\Z[H\backslash G]$ in case $H$ is of finite index. We get
\[C^n_{\rm{cell}}(X,G;\Z)=\prod_i\Z[[H_i\backslash G]]\]
where the $H_i$'s are as above.

If $G$ acts on a set $X$ and if $N\unlhd G$, then we may consider the \emph{deflation} $\mathrm{Def}^G_{G/N}(X):=X/N$ of $X$, with the induced action of $G/N$. On another hand, if $\pi : G\twoheadrightarrow G/N$ is the projection, then we have a canonical isomorphism of $G/N$-sets $\mathrm{Def}^G_{G/N}(G/H)\simeq \pi(G)/\pi(H)$, for every subgroup $H$ of $G$. This gives a functor $\mathrm{Def}^G_{G/N} : G\mhyphen\mathbf{Set}\to G/N\mhyphen\mathbf{Set}$ and linearizing it gives the usual linear deflation
\[\begin{array}{ccccc}
\mathrm{Def}_{G/N}^G & : & \Z[G]\mhyphen\mathbf{Mod} & \longrightarrow & \Z[G/N]\mhyphen\mathbf{Mod} \\ & & U & \longmapsto & U_N\end{array}\]
where $U_N:=U/\left<nu-u~|~n\in N,~u\in U\right>$, which we extend to (co)chain complex categories. We have the following easy result:

\begin{lem}\label{GCWandSDP}
Let $G$ be a discrete group, written as a semi-direct product $G=N\rtimes H$, $X$ be a $G$-CW-complex and let $p : X\twoheadrightarrow X/N$ be the natural projection. If the quotient space $X/N$ is Hausdorff, then it is an $H$-CW-complex such that $\mathcal{E}_k(X/N)=\{p(e),~e\in\mathcal{E}_k(X)\}$ for all $k\in\N$ and the projection $G\twoheadrightarrow H$ induces a natural isomorphism
\[C^\mathrm{cell}_*(X/N,H;\Z)\simeq \mathrm{Def}^G_H\left(C^\mathrm{cell}_*(X,G;\Z)\right).\]
\end{lem}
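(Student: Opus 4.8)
The plan is to reduce to the cited result of tom Dieck by first restricting the $G$-action to $N$, then to equip the quotient with the residual $H$-action, and finally to match the cellular chain complexes both degreewise and as chain complexes.

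First I would restrict the $G$-action on $X$ to the normal subgroup $N\unlhd G$, making $X$ an $N$-CW-complex. Since $X/N$ is assumed Hausdorff, I may invoke \cite[II, \S 1.17, Exercise 2]{tomdieck_transgroups} for the group $N$: the orbit space $X/N$ is a CW-complex with $k$-skeleton $(X/N)_k=X_k/N$, and its $k$-cells are exactly the images $p(e)$, $e\in\mathcal{E}_k(X)$, two cells having the same image precisely when they lie in a single $N$-orbit. This yields both the skeleton description and the asserted identity $\mathcal{E}_k(X/N)=\{p(e),~e\in\mathcal{E}_k(X)\}=\mathcal{E}_k(X)/N$.

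Next, since $N$ is normal, the quotient $G/N\simeq H$ acts on $X/N$ by $(gN)\cdot[x]:=[gx]$, which is well-defined, and via the splitting $H\hookrightarrow G$ this is an $H$-action permuting cells. The content here is to verify the two axioms of Definition \ref{GCW}. The key lemma, used twice, is that the rigidity axiom for $G$ forces $p$ to be injective on each open cell: if $x,y\in e$ satisfy $y=nx$ with $n\in N$, then $ne$ meets $e$, so $ne=e$ as cells, whence $n$ fixes $e$ pointwise and $y=x$; thus $p|_e$ is a homeomorphism onto the open cell $p(e)$. For the rigidity axiom of the $H$-action: if $h\cdot p(e)=p(e)$ then $he=ne$ for some $n\in N$, so $n^{-1}h\in G$ stabilizes the cell $e$ and hence fixes it pointwise by rigidity for $G$; therefore $hx=nx$ for $x\in e$, giving $h\cdot p(x)=p(x)$ on all of $p(e)$. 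This establishes that $X/N$ is an $H$-CW-complex.

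It remains to identify the chain complexes. Degreewise, the deflation of a permutation module is the permutation module on the orbit set, so $\mathrm{Def}^G_H(C^\mathrm{cell}_k(X,G;\Z))=\Z\langle\mathcal{E}_k(X)\rangle/\langle ne-e\rangle\simeq\Z\langle\mathcal{E}_k(X)/N\rangle=C^\mathrm{cell}_k(X/N,H;\Z)$, the last equality by the first step; the canonical isomorphism $\mathrm{Def}^G_{G/N}(G/G_e)\simeq\pi(G)/\pi(G_e)$ recalled before the statement makes this $H$-equivariant on stabilizer cosets. I would realize this isomorphism geometrically through the map induced by the cellular quotient $p:X\to X/N$: because $p|_e$ is a homeomorphism onto $p(e)$, the induced chain map $p_*$ carries the generator $e$ to $p(e)$ (with compatible orientation), hence annihilates $ne-e$ and factors through the deflation. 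The main obstacle is precisely this chain-level step: one must confirm that the geometric map $p_*$ coincides with the algebraic deflation map not merely on generators but compatibly with the boundary operators. This is where I would be most careful, fixing orientations of the cells consistently so that $p_*(e)=p(e)$ with the correct sign, and then observing that $p_*$ is automatically a chain map, being induced by a continuous cellular map, so that the resulting bijection on generators intertwines the two differentials. Naturality and $H$-equivariance then follow formally, completing the isomorphism $C^\mathrm{cell}_*(X/N,H;\Z)\simeq\mathrm{Def}^G_H(C^\mathrm{cell}_*(X,G;\Z))$.
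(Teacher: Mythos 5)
Your proof is correct and follows essentially the same route as the paper: both rest on the standard fact that a Hausdorff quotient of a CW-complex by a cellular group action is again a CW-complex (the paper cites Lundell--Weingram's cellular-relation theorem where you apply the tom Dieck exercise to $N$, which the paper itself invokes just before the lemma), then check $H$-cellularity, and finally identify $C^\mathrm{cell}_*(X/N,H;\Z)$ with the deflation via the orbit bijection $\mathcal{E}_k(X)/G\approx\mathcal{E}_k(X/N)/H$ and the stabilizer comparison $H_{p(e)}\simeq\pi(G_e)$. Your additional care about the rigidity axiom and about compatibility of $p_*$ with orientations and boundary operators fills in details the paper dismisses as immediate, but it does not change the underlying argument.
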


\section{The simply-connected case}

\subsection{The $W_\mathrm{a}$-triangulation of $V^*$ associated to the fundamental alcove}
\hfill

In order to obtain a $W_{\rm{a}}$-triangulation of $V^*$, it suffices to have a triangulation of $\overline{\mathcal{A}_0}$, which is compatible with the action of $W_{\mathrm{a}}$ in the sense that if a face is fixed globally by some $w\in W_{\mathrm{a}}$, then $w$ induces the identity on this face.

Given a polytope $\mathcal{P}\subset\R^n$ and an integer $-1\le k\le \dim(\mathcal{P})$, we denote by $F_k(\mathcal{P})$ the set of $k$-dimensional faces of $\mathcal{P}$, in particular, $F_{-1}(\mathcal{P})=\{\emptyset\}$ and $F_{\dim(\mathcal{P})}(\mathcal{P})=\{\mathcal{P}\}$. We let $F(\mathcal{P}):=\bigcup_kF_k(\mathcal{P})$ be the \emph{face lattice} of $\mathcal{P}$. It is indeed a lattice for the inclusion relation.

Resuming to root data, for each $i\in S:=\{1,\dotsc,n\}$ we consider the hyperplane
\[H_i:=H_{\alpha_i,0}=\{\lambda\in V^*~;~\left<\lambda,\alpha_i\right>=0\}~~\text{and}~~H_0:=H_{\alpha_0,1}=\{\lambda\in V^*~;~\left<\lambda,\alpha_0\right>=1\}.\]
We also take the following notation for the vertices of $\overline{\mathcal{A}_0}$, 
\[v_i:=\frac{\varpi_i^\vee}{n_i}~~\text{and}~~v_0:=0~~\text{so that}~~\vertices(\overline{\mathcal{A}_0})=\{v_0,v_1,\dotsc,v_n\}.\]
The hyperplanes $H_i$ for $i\in S_0:=S\cup\{0\}$ form a complete set of bounding hyperplanes for the $n$-simplex $\overline{\mathcal{A}_0}$ and for every face $f\in F_k(\overline{\mathcal{A}_0})$ there is a subset $I\subseteq S_0$ of cardinality $|I|=\codim_{\overline{\mathcal{A}_0}}(f)=n-k$ such that 
\[f=f_I:=\overline{\mathcal{A}_0}\cap\bigcap_{i\in I}H_i\]
and we readily have $\vertices(f_I)=\{v_i~;~i\in S_0\setminus I\}$.

For $I\subseteq S_0$, we may consider the \emph{(standard) parabolic subgroup} $(W_\mathrm{a})_I$ of $W_\mathrm{a}$ generated by the subset $\{s_i,~i\in I\}$. If $0\notin I$, then $(W_\mathrm{a})_I$ is viewed as a parabolic subgroup of $W$.

\begin{lem}\label{stabfaceinA0}
Let $0\le k \le n$ and $I\subseteq S_0$ with $|I|=n-k$. Then the stabilizer $(W_\mathrm{a})_{f_I}$ of the face $f_I\in F_k(\overline{\mathcal{A}_0})$ is the parabolic subgroup of $W_\mathrm{a}$ associated to $I$. In other words,
\[(W_\mathrm{a})_{f_I}=(W_\mathrm{a})_I.\]
\end{lem}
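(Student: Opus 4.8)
The plan is to prove the two inclusions separately, the nontrivial one resting on the fact that $\overline{\mathcal{A}_0}$ is a strict fundamental domain for $W_\mathrm{a}$.

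First I would record the easy inclusion $(W_\mathrm{a})_I\subseteq(W_\mathrm{a})_{f_I}$. For each $i\in I$ the face $f_I$ is contained in the hyperplane $H_i$ by its very definition $f_I=\overline{\mathcal{A}_0}\cap\bigcap_{i\in I}H_i$, and the generator $s_i$ fixes $H_i$ pointwise. Hence each $s_i$ with $i\in I$ fixes $f_I$ pointwise, so the subgroup $(W_\mathrm{a})_I=\langle s_i : i\in I\rangle$ they generate fixes $f_I$ pointwise and in particular stabilizes it.

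For the reverse inclusion I would pick a point $\lambda$ in the relative interior of $f_I$ and pin down exactly which generators fix it. Using the description of $\overline{\mathcal{A}_0}$ by the constraints $\langle\lambda,\alpha_i\rangle\ge 0$ (for $i\in S$) and $\langle\lambda,\alpha_0\rangle\le 1$, a relative interior point of $f_I$ satisfies the constraints indexed by $I$ with equality and all the others strictly; consequently $\lambda\in H_j$ if and only if $j\in I$, for every $j\in S_0$, that is, $s_j\lambda=\lambda$ iff $j\in I$. Now let $\widetilde{w}\in(W_\mathrm{a})_{f_I}$. Since $\widetilde{w}$ stabilizes $f_I$ setwise, the point $\widetilde{w}\lambda$ again lies in $f_I\subseteq\overline{\mathcal{A}_0}$, so both $\lambda$ and $\widetilde{w}\lambda$ belong to the fundamental domain $\overline{\mathcal{A}_0}$. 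The strict fundamental domain property of $\overline{\mathcal{A}_0}$ for $W_\mathrm{a}$ (\cite[\S 4.5 and 4.8]{humphreys-reflectiongroups}) then forces $\widetilde{w}\lambda=\lambda$ and, more precisely, expresses $\widetilde{w}$ as a product of those simple reflections $s_j$ (with $j\in S_0$) that fix $\lambda$. By the previous sentence these are exactly the $s_j$ with $j\in I$, whence $\widetilde{w}\in(W_\mathrm{a})_I$. Combining the two inclusions yields $(W_\mathrm{a})_{f_I}=(W_\mathrm{a})_I$.

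The main obstacle — or rather the one point that must be invoked with care — is the passage from \enquote{stabilizes $f_I$} to \enquote{fixes the relative interior point $\lambda$}, which is precisely where the fundamental domain theorem enters: a priori an element could permute the vertices of $f_I$, and it is the strict fundamental domain property that rules this out and simultaneously guarantees that the element decomposes into generators from $S_0$ rather than arbitrary affine reflections of $W_\mathrm{a}$. This is also what prevents additional reflections, coming from hyperplanes $H_{\alpha,k}$ other than the walls $H_j$ with $j\in S_0$, from contributing to the stabilizer of $\lambda$. A minor verification is that such a relative interior point exists and realizes exactly the claimed incidences, which is immediate from the simplex structure of $\overline{\mathcal{A}_0}$ together with the identification $\vertices(f_I)=\{v_i : i\in S_0\setminus I\}$.
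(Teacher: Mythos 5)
Your proof is correct. Both your argument and the paper's rest on the same key input, namely the Theorem of \cite[\S 4.8]{humphreys-reflectiongroups} (more precisely its proof): if $\lambda,\mu\in\overline{\mathcal{A}_0}$ and $\widetilde{w}\lambda=\mu$, then $\lambda=\mu$ and $\widetilde{w}$ is a product of reflections $s_j$, $j\in S_0$, each fixing $\lambda$. But you deploy it differently. The paper first observes that $\vertices(f_I)$ is stable under $(W_\mathrm{a})_{f_I}$ and that distinct vertices lie in distinct orbits, so the stabilizer of the face is the intersection $\bigcap_{i\in S_0\setminus I}(W_\mathrm{a})_{v_i}$ of the vertex stabilizers; it then identifies each $(W_\mathrm{a})_{v_j}$ with the standard parabolic $(W_\mathrm{a})_{S_0\setminus\{j\}}$ and concludes via the fact that an intersection of standard parabolic subgroups is the standard parabolic subgroup of the intersection of the index sets. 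You instead apply the fundamental-domain theorem once, to a single point in the relative interior of $f_I$: since such a point lies on $H_j$ exactly for $j\in I$, the decomposition of $\widetilde{w}$ into simple reflections fixing it lands directly in $(W_\mathrm{a})_I$. Your route is shorter and avoids two auxiliary facts the paper needs (that the setwise stabilizer fixes each vertex, and the parabolic-intersection formula $\bigcap_i (W_\mathrm{a})_{J_i}=(W_\mathrm{a})_{\bigcap_i J_i}$), at the cost of having to justify the elementary observation that relative-interior points of $f_I$ realize exactly the incidences indexed by $I$; the paper's vertex-based route, on the other hand, generalizes more readily to situations where one only controls the action on vertices.
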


\begin{proof}
As $\vertices(f_I)=\{v_i,~i\notin I\}$ is $(W_\mathrm{a})_{f_I}$-stable, the Theorem from \cite[\S 4.8]{humphreys-reflectiongroups} yields
\[(W_\mathrm{a})_{f_I}=\bigcap_{i\in S_0\setminus I}(W_\mathrm{a})_{v_i}=\bigcap_{i\in S_0\setminus I}(W_\mathrm{a})_{S_0\setminus\{i\}}=(W_\mathrm{a})_{\bigcap_{i\notin I}S_0\setminus\{i\}}=(W_\mathrm{a})_I.\]
\end{proof}

Therefore, we have a triangulation
\[V^*=\coprod_{\substack{f\in F(\overline{\mathcal{A}_0}) \\ \widetilde{w}\in W_\mathrm{a}/(W_{\mathrm{a}})_f}}\widetilde{w}\cdot f\]
which is $W_\mathrm{a}$-equivariant and following the notation from the first section, we have $\mathcal{E}_k(V^*)/W_\mathrm{a}=F_k(\overline{\mathcal{A}_0})$ for all $k$. Therefore, we get isomorphisms of $\Z[W_\mathrm{a}]$-modules
\[C^\mathrm{cell}_k(V^*,W_\mathrm{a};\Z)\simeq\bigoplus_{f\in F_k(\overline{\mathcal{A}_0})}\Z[W_\mathrm{a}/(W_\mathrm{a})_f]=\bigoplus_{\substack{I\subset S_0 \\ |I|=n-k}}\Z[W_\mathrm{a}/(W_\mathrm{a})_I].\]

We have to fix an orientation of the cells in $V^*$ and determine their boundary. But each one of them is a simplex, so its orientation is determined by an orientation on its vertices. We choose to orient them as the index set $(S_0,\le)$. For $I\subseteq S_0$ with corresponding $k$-face $f_I=\conv(\{v_i~;~i\in S_0\setminus I\})$, we write
\[f_I=[v_{j_1},\dotsc,v_{j_{k+1}}]~~\text{with}~~\{j_1<j_2<\dotsc<j_{k+1}\}=S_0\setminus I\]
to make its orientation explicit. The oriented boundary of $f_I$ is then simply given by
\[\partial_k(f_I)=\sum_{u=1}^{k+1}(-1)^u\underbrace{[v_{j_1},\dotsc,\widehat{v_{j_u}},\dotsc,v_{j_{k+1}}]}_{=\conv(\{v_j~;~j_u\ne j\in S_0\setminus I\})}=\sum_{u=1}^{k+1}(-1)^uf_{I\cup\{j_u\}}\]

We have thus obtained the following result:

\begin{theo}\label{complexforWa}
The face lattice of the $n$-simplex $\overline{\mathcal{A}_0}$ induces a $W_\mathrm{a}$-equivariant triangulation of $V^*$, whose cellular complex $C^\mathrm{cell}_*(V^*,W_\mathrm{a};\Z)$ is given (in degrees $k$ and $k-1$) by
\[\xymatrix{\cdots \ar[r] & \displaystyle{\bigoplus_{\substack{I\subset S_0 \\ |I|=n-k}}\Z\left[W_\mathrm{a}^I\right]} \ar^<<<<<{\partial_k}[r] & \displaystyle{\bigoplus_{\substack{I\subset S_0 \\ |I|=n-k+1}}\Z\left[W_\mathrm{a}^I\right]} \ar[r] & \cdots}\]
where $W_\mathrm{a}^I\approx W_\mathrm{a}/(W_\mathrm{a})_I$ is the $W_\mathrm{a}$-set of minimal length left coset representatives and the boundaries are defined as follows: for $k\in\N$ and $I\subset S_0$, letting $\{j_1<\cdots <j_{k+1}\}:=S_0\setminus I$,
\[{(\partial_k)}_{\mid\Z\left[W_\mathrm{a}^I\right]}=\sum_{u=1}^{k+1}(-1)^u p^I_{I\cup\{j_u\}},\]
where, for $I\subset J$, $p^{I}_J$ denotes the projection
\[p^{I}_J : W_\mathrm{a}^{I}=W_\mathrm{a}/(W_\mathrm{a})_{I} \longtwoheadrightarrow W_\mathrm{a}/(W_\mathrm{a})_J=W_\mathrm{a}^J.\]
\end{theo}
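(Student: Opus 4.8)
The plan is to assemble the statement from the material already in place: the decomposition $V^*=\coprod_{\widetilde{w},f}\widetilde{w}\cdot f$, the computation of orbits $\mathcal{E}_k(V^*)/W_\mathrm{a}=F_k(\overline{\mathcal{A}_0})$, the stabilizer computation of Lemma \ref{stabfaceinA0}, and the explicit boundary formula $\partial_k(f_I)=\sum_u(-1)^u f_{I\cup\{j_u\}}$ derived just above the statement. There are then only three things left to check: that the tiling genuinely yields a $W_\mathrm{a}$-CW-complex in the sense of Definition \ref{GCW} (so that the chain groups are permutation modules); that each $\Z[W_\mathrm{a}/(W_\mathrm{a})_I]$ may be rewritten using the minimal-length coset system $W_\mathrm{a}^I$; and that the equivariant boundary is the signed sum of the projections $p^I_{I\cup\{j_u\}}$.

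First I would record that the hyperplane arrangement $\{H_{\alpha,k}\}$ is locally finite, so that the $W_\mathrm{a}$-translates of the faces of $\overline{\mathcal{A}_0}$ form a locally finite simplicial complex on $V^*$ on which $W_\mathrm{a}$ acts by simplicial automorphisms. Since $\overline{\mathcal{A}_0}$ is a closed fundamental domain by \cite[\S 4.5]{humphreys-reflectiongroups} and the theorem of \cite[\S 4.8]{humphreys-reflectiongroups} shows that the stabilizer of any face fixes it pointwise, the condition of Definition \ref{GCW} holds and $V^*$ is a $W_\mathrm{a}$-CW-complex with $\mathcal{E}_k(V^*)/W_\mathrm{a}=F_k(\overline{\mathcal{A}_0})$. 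Combining this with Lemma \ref{stabfaceinA0} gives the isomorphism $C^\mathrm{cell}_k\simeq\bigoplus_{|I|=n-k}\Z[W_\mathrm{a}/(W_\mathrm{a})_I]$ of permutation modules already displayed (note that $I=\emptyset$ recovers the free orbit of $n$-cells, consistent with the simply transitive action on open alcoves). The passage to $W_\mathrm{a}^I$ is then the standard fact from Coxeter theory that every left coset $\widetilde{w}(W_\mathrm{a})_I$ contains a unique element of minimal length, giving a $W_\mathrm{a}$-equivariant bijection $W_\mathrm{a}/(W_\mathrm{a})_I\approx W_\mathrm{a}^I$.

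For the boundary, I would fix the orientation of each representative $f_I$ by the increasing vertex order on $(S_0,\le)$ and transport it to the whole orbit via the $W_\mathrm{a}$-action; this is well defined exactly because stabilizers fix faces pointwise, hence act trivially on orientations. Because deleting a vertex from an increasing list leaves an increasing list, the induced orientation on each facet $f_{I\cup\{j_u\}}$ of $f_I$ agrees with the chosen orientation of that face as an orbit representative, so the simplicial formula $\partial_k(f_I)=\sum_u(-1)^u f_{I\cup\{j_u\}}$ holds on the nose for representatives. Extending $W_\mathrm{a}$-equivariantly, the generator $f_I\leftrightarrow 1\cdot(W_\mathrm{a})_I$ maps to $\sum_u(-1)^u\,1\cdot(W_\mathrm{a})_{I\cup\{j_u\}}$, which is precisely $\sum_u(-1)^u p^I_{I\cup\{j_u\}}$, as claimed.

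The main obstacle is the orientation bookkeeping: one must be sure that the vertex-order orientations chosen on the faces of the single simplex $\overline{\mathcal{A}_0}$ can be spread $W_\mathrm{a}$-equivariantly over all of $V^*$ without sign clashes, i.e. that the complex really is a complex of permutation modules rather than of twisted (orientation-character) modules. This hinges entirely on the pointwise-fixing property of face stabilizers supplied by \cite[\S 4.8]{humphreys-reflectiongroups} and encoded in Definition \ref{GCW}; once that is in hand, the remaining verifications are routine.
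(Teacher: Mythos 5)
Your proof is correct and follows essentially the same route as the paper: the $W_\mathrm{a}$-equivariance of the face structure via \cite[\S 4.8]{humphreys-reflectiongroups}, the stabilizer computation of Lemma \ref{stabfaceinA0}, the orbit decomposition $\mathcal{E}_k(V^*)/W_\mathrm{a}=F_k(\overline{\mathcal{A}_0})$, the vertex-order orientation on $(S_0,\le)$, and the simplicial boundary formula extended equivariantly. Your explicit remarks on local finiteness of the arrangement and on why pointwise fixing of faces by stabilizers makes the orientation transport well defined (ruling out orientation-character twisting) are details the paper leaves implicit, but they do not change the argument.
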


\begin{exemple}\label{ex_with_A2}
We look at the case of the group $SU(3)$ in type $A_2$. We denote by $\Phi=\{\pm\alpha,\pm\beta,\pm(\alpha+\beta)\}$ a root system of type $A_2$, with simple system $\Pi=\{\alpha,\beta\}$. The chain complex $C^{\rm{cell}}_*(V^*,W_{\mathrm{a}};\Z)$ is readily given by
\[\footnotesize{\xymatrix{\Z[W_{\mathrm{a}}] \ar^>>>>>{\partial_2}[r] & \Z[W_{\mathrm{a}}/\left<s_\beta\right>]\oplus\Z[W_{\mathrm{a}}/\left<s_0\right>]\oplus\Z[W_{\mathrm{a}}/\left<s_\alpha\right>]\ar^>>>>>{\partial_1}[r] & \Z[W_{\mathrm{a}}/\left<s_\alpha,s_\beta\right>]\oplus\Z[W_{\mathrm{a}}/\left<s_\beta,s_0\right>]\oplus\Z[W_{\mathrm{a}}/\left<s_\alpha,s_0\right>]}},\]
where the boundaries are
\[\partial_2=\left(\begin{smallmatrix}1 & 1 & -1\end{smallmatrix}\right),~\partial_1=\left(\begin{smallmatrix}-1 & 1 & 0 \\ 0 & -1 & 1 \\ -1 & 0 & 1\end{smallmatrix}\right).\]
Applying the functor $\mathrm{Def}_W^{W_{\mathrm{a}}}$, we obtain the complex $C^\mathrm{cell}_*(T,W;\Z)$ for $T=S(U(1)^3)$ (see Figure \ref{unsubdivided_alcoveA2}) as
\[\xymatrix{\Z[W] \ar^>>>>>{\partial_2}[r] & \Z[W/\left<s_\beta\right>]\oplus\Z[W/\left<s_\alpha s_\beta s_\alpha\right>]\oplus\Z[W/\left<s_\alpha\right>]\ar^<<<<<{\partial_1}[r] & \Z^3}.\]

\begin{center}
\begin{figure}[h!]
\begin{subfigure}[c]{0.3\textwidth}
\begin{tikzpicture}[scale=1.3,rotate=45]  
  \coordinate (z) at (0,0);
  \coordinate (a) at (1,-1);
  \coordinate (b) at (0.3660254040,1.366025404);
  \coordinate (ab) at (1+0.3660254040,-1+1.366025404);
  \coordinate (ma) at (-1,1);
  \coordinate (mb) at (-0.3660254040,-1.366025404);
  \coordinate (mab) at (-1-0.3660254040,1-1.366025404);
  
  \coordinate (la) at (2/3+1/3*0.3660254040,-2/3+1/3*1.366025404);
  \coordinate (lb) at (1/3+2/3*0.3660254040,-1/3+2/3*1.366025404);
  
  \coordinate (lad) at (1/3+1/6*0.3660254040,-1/3+1/6*1.3660254040);
  \coordinate (lbd) at (1/6+1/3*0.3660254040,-1/6+1/3*1.3660254040);
  \coordinate (barlab) at (1/3+1/3*0.3660254040,-1/3+1/3*1.3660254040);
  \coordinate (labd) at ($(la)!0.5!(lb)$);
  
  \coordinate (lamx) at (-2/3+1/3*1.3660254040,2/3+1/3*0.366025404);
  \coordinate (lamy) at (2/3-1/3*1.3660254040,-2/3-1/3*0.366025404);
  \coordinate (mla) at (-2/3-1/3*0.3660254040,2/3-1/3*1.366025404);
  \coordinate (mlb) at (1/3-2/3*1.3660254040,-1/3-2/3*0.366025404);
  \coordinate (lblamxd) at ($(lb)!0.5!(lamx)$);
  \coordinate (lamxmlad) at ($(lamx)!0.5!(mla)$);
  \coordinate (mlamlbd) at ($(mla)!0.5!(mlb)$);
  \coordinate (mlblamyd) at ($(mlb)!0.5!(lamy)$);
  \coordinate (lamylad) at ($(lamy)!0.5!(la)$);
  
  \fill[fill=black] (a) circle (1pt);
  \fill[fill=black] (b) circle (1pt);
  \fill[fill=black] (ab) circle (1pt);
  \fill[fill=black] (ma) circle (1pt);
  \fill[fill=black] (mb) circle (1pt);
  \fill[fill=black] (mab) circle (1pt);
  
  \fill[fill=red] (la) circle (1pt);
  \fill[fill=red] (lb) circle (1pt);
  \fill[fill=red] (z) circle (1pt);
  \fill[fill=red] (lamx) circle (1pt);
  \fill[fill=red] (lamy) circle (1pt);
  \fill[fill=red] (mla) circle (1pt);
  \fill[fill=red] (mlb) circle (1pt);
  
  \draw[dotted] (z)--(labd) (z)--(lblamxd) (z)--(lamxmlad) (z)--(mlamlbd) (z)--(mlblamyd) (z)--(lamylad);
  \draw (labd)--(ab) (lblamxd)--(b) (lamylad)--(a);
  \draw[dashed] (lamxmlad)--(ma) (mlblamyd)--(mb) (mlamlbd)--(mab);
  
  \draw (a) node[right]{\small{$\alpha^\vee$}};
  \draw (b) node[left]{\small{$\beta^\vee$}};
  \draw (ab) node[right]{\small{$\alpha_0^\vee=\alpha^\vee+\beta^\vee$}};
  \draw (la) node[right]{\small{$\varpi_\alpha^\vee$}};
  \draw (lb) node[above]{\small{$\varpi_\beta^\vee$}};
  
  \fill[fill=blue,opacity=0.55] (z)--(la)--(lb);
  \fill[fill=red,opacity=0.55] (z)--(lb)--(lamx);
  \fill[fill=green,opacity=0.55] (z)--(lamx)--(mla);
  \fill[fill=pink,opacity=0.55] (z)--(mla)--(mlb);
  \fill[fill=black,opacity=0.55] (z)--(mlb)--(lamy);
  \fill[fill=orange,opacity=0.55] (z)--(lamy)--(la);
  
  \draw (z)--(la) (z)--(lb) (z)--(lamx) (z)--(lamy) (z)--(mla) (z)--(mlb);
  \draw (lb)--(la) (la)--(lamy) (lamy)--(mlb) (mlb)--(mla) (mla)--(lamx) (lamx)--(lb);
  
  \coordinate (x) at (2+0.3660254040,-2+1.3660254040);
  \coordinate (y) at (1+2*0.3660254040,-1+2*1.3660254040);

\end{tikzpicture}
\caption{The fundamental alcove $\overline{\mathcal{A}_0}$ (in blue) in type $A_2$, and its $\Sym_3$-translates.}
\end{subfigure}
\hfill
\begin{subfigure}[c]{0.3\textwidth}
\includegraphics[scale=0.08]{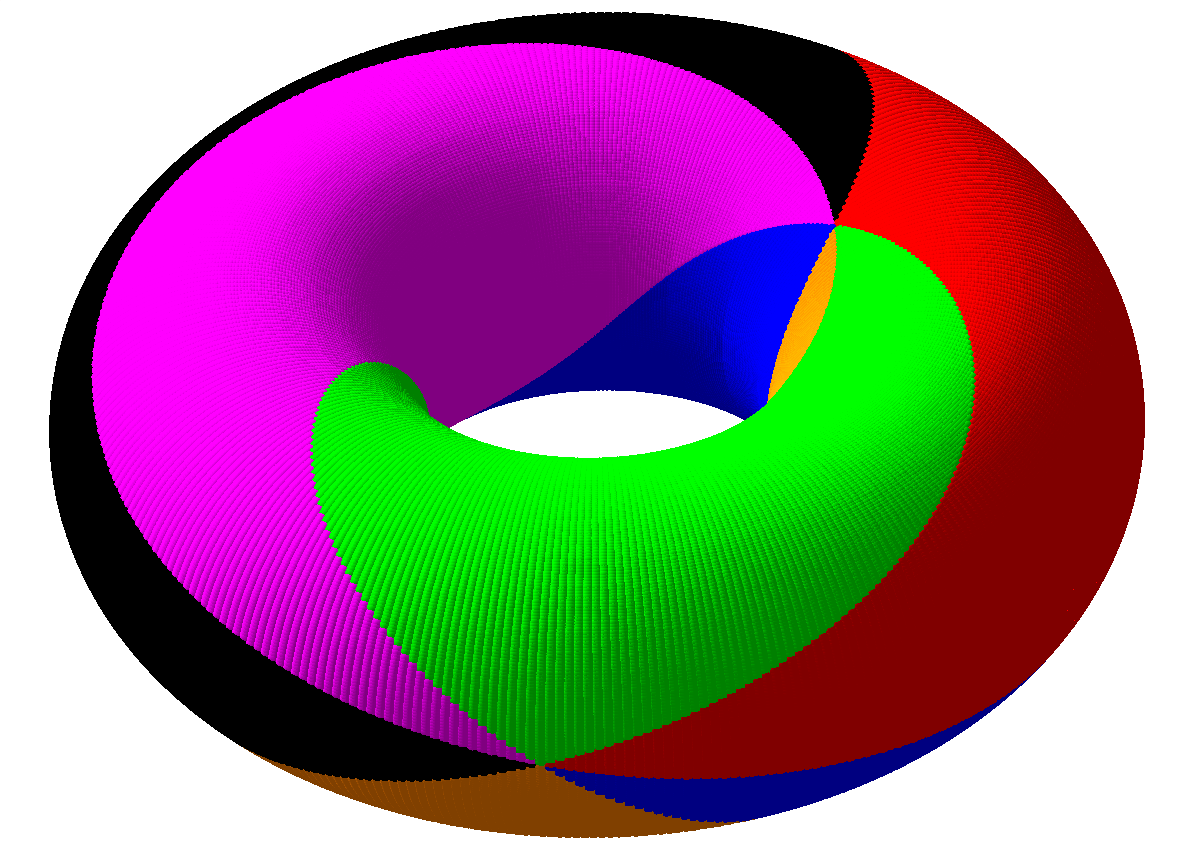}
\caption{The resulting $\Sym_3$-equivariant triangulation of $S(U(1)^3)\simeq (\Sph^1)^2$.}
\end{subfigure}
\caption{Triangulation of the torus $S(U(1)^3)$ of $SU(3)$.}
\label{unsubdivided_alcoveA2}
\end{figure}
\end{center}
\end{exemple}

\subsection{The $W$-dg-ring structure}
\hfill

We now make the cup product on $C^\mathrm{cell}_*(V^*,W_\mathrm{a};\Z)$ explicit. We write
\[C_\mathrm{cell}^k(V^*,W_\mathrm{a};\Z)=\prod_{\substack{I\subset S_0 \\ |I|=n-k}}\Z\left[W_\mathrm{a}/(W_\mathrm{a})_I\right]^\vee\simeq\bigoplus_{\substack{I\subset S_0 \\ |I|=n-k}}\Z\left[\left[{}^I{W_\mathrm{a}}\right]\right],\]
where 
\[{}^I{W_\mathrm{a}}\stackrel{\tiny{\text{df}}}=\{w\in W_\mathrm{a}~;~\ell(s_iw)>\ell(w),~\forall i\in I\}\approx (W_\mathrm{a})_I\backslash W_\mathrm{a}.\]

\begin{lem}[{\cite[\S 3, Proposition 2.7 and Corollary 2.8]{billey2016}}]\label{DCD}
Let $(W,S)$ be a Coxeter system and $I,J\subset S$. Denote as usual
\[W^I:=\{w\in W~;~\ell(ws)>\ell(w),~\forall s\in I\}\approx W/W_I,\]
\[{}^IW:=\{w\in W~;~\ell(sw)>\ell(w),~\forall s\in I\}\approx W_I\backslash W\]
and for $J\subset I$,
\[W^J_I:=\{w\in W_I~;~\ell(ws)>\ell(w),~\forall s\in J\}\approx W_I/W_J.\]
\begin{enumerate}
\item Each double coset in $W_I\backslash W/W_J$ has a unique element of minimal length.
\item An element $w\in W$ is of minimal length in its double coset if and only if $w\in {}^IW\cap W^J$. In particular, we have a bijection 
\[W_I\backslash W/W_J\approx {}^IW\cap W^J.\]
\item As a consequence, if $w\in {}^IW\cap W^J$ and $x\in W_I$, then $xw\in W^J$ if and only if $x\in W^{I\cap {}^{w\!}{J}}_I$, where we let ${}^{w\!}{J}:=wJw^{-1}$. Hence, we have the following property:
\[\forall x\in W_IwW_J,~\exists!(u,v)\in W^{I\cap{}^{w\!}{J}}_I\times W_J~;~\left\{\begin{array}{ll}x=uwv, \\ 
\ell(x)=\ell(u)+\ell(w)+\ell(v). \end{array}\right.\]
\end{enumerate}
\end{lem}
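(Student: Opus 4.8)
The plan is to reduce everything to two standard single-coset decomposition facts together with Kilmoyer's lemma on intersections of parabolics. Throughout I would use the two basic facts (Bourbaki, Humphreys): every $w\in W$ factors uniquely as $w=w_I\cdot{}^Iw$ with $w_I\in W_I$, ${}^Iw\in{}^IW$ and $\ell(w)=\ell(w_I)+\ell({}^Iw)$, and dually as $w=w^J\cdot w_J$ with $w^J\in W^J$, $w_J\in W_J$ and $\ell(w)=\ell(w^J)+\ell(w_J)$. These are proved by induction on $\ell(w)$ via the exchange condition, and I would simply cite them and apply the same statements inside the parabolic subgroup $W_I$ in place of $W$.

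Parts (1) and (2) then come out quickly. If $w$ has minimal length in its double coset $D=W_IwW_J$ and $w\notin W^J$, then $\ell(ws)<\ell(w)$ for some $s\in J$ while $ws\in D$, contradicting minimality; hence $w\in W^J$, and symmetrically $w\in{}^IW$. Conversely, once the factorization of (3) is available, every $x\in D$ satisfies $\ell(x)=\ell(u)+\ell(w)+\ell(v)\ge\ell(w)$ with equality only for $u=v=e$, i.e. $x=w$; this yields simultaneously that the minimal element is unique and that it is exactly the element of ${}^IW\cap W^J$ lying in $D$, giving the bijection $W_I\backslash W/W_J\approx{}^IW\cap W^J$.

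The real content is (3). First I would establish Kilmoyer's lemma: for $w\in{}^IW\cap W^J$ the intersection $W_I\cap wW_Jw^{-1}$ is the \emph{standard} parabolic subgroup $W_{I\cap{}^wJ}$ of $W_I$, where $I\cap{}^wJ\subseteq I$ is the indicated subset. Granting this, take $x=awb$ with $a\in W_I$, $b\in W_J$ and decompose $a$ inside $W_I$ relative to $W_{I\cap{}^wJ}$ as $a=uc$ with $u\in W_I^{I\cap{}^wJ}$, $c\in W_{I\cap{}^wJ}=W_I\cap wW_Jw^{-1}$ and $\ell(a)=\ell(u)+\ell(c)$. Writing $c=wb'w^{-1}$ with $b'\in W_J$ gives $x=uw(b'b)=uwv$ with $v=b'b\in W_J$, which is the asserted existence. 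For length-additivity I would splice the two single-coset facts: $\ell(uw)=\ell(u)+\ell(w)$ since $u\in W_I$ and $w\in{}^IW$, and $\ell(wv)=\ell(w)+\ell(v)$ since $w\in W^J$ and $v\in W_J$, the minimality $u\in W_I^{I\cap{}^wJ}$ being exactly what prevents cancellation across $w$ and yielding $\ell(uwv)=\ell(u)+\ell(w)+\ell(v)$. The characterization $xw\in W^J\Leftrightarrow x\in W_I^{I\cap{}^wJ}$ follows by the same bookkeeping: $xw\notin W^J$ means $\ell(xws)<\ell(xw)$ for some $s\in J$, and since $w\in W^J$ this decrease must originate from $W_{I\cap{}^wJ}$, i.e. from $x$ failing to be minimal in its $W_{I\cap{}^wJ}$-coset. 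Finally, uniqueness: if $uwv=u'wv'$ then $u'^{-1}u=w(v'v^{-1})w^{-1}\in W_I\cap wW_Jw^{-1}=W_{I\cap{}^wJ}$, and as $u,u'$ are minimal representatives of $W_I/W_{I\cap{}^wJ}$ this forces $u=u'$ and then $v=v'$.

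I expect the main obstacle to be precisely the pairing of Kilmoyer's lemma with the no-cancellation length estimate: verifying that $W_I\cap wW_Jw^{-1}$ is a standard parabolic of $W_I$, and that concatenating the two reduced factorizations on either side of $w$ produces no shortening. Both hinge delicately on $w$ being the minimal double-coset representative (equivalently $w\in{}^IW\cap W^J$), and this is where I would concentrate the bulk of the argument.
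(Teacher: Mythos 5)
You should first know that the paper contains no proof of this lemma at all: it is imported verbatim from Billey--Konvalinka--Petersen--Slofstra--Tenner \cite{billey2016}, so there is no in-paper argument to compare yours against, and I can only judge the proposal on its own merits. Your overall architecture is the standard one and is logically sound: the forward half of (2) (minimal $\Rightarrow w\in{}^IW\cap W^J$) is correct and elementary; deducing (1) and the converse half of (2) from the factorization of (3) involves no circularity, since (3) is formulated for an arbitrary $w\in{}^IW\cap W^J$ and every double coset contains such an element by the forward half; your existence step via Kilmoyer's lemma is correct (and invoking Kilmoyer without proof is defensible, as it is a citable named result whose usual proofs do not presuppose the present lemma, though you should then cite it rather than promise to ``establish'' it); and your uniqueness argument, via $u'^{-1}u=w(v'v^{-1})w^{-1}\in W_I\cap wW_Jw^{-1}=W_{I\cap{}^wJ}$ plus minimality of $u,u'$, is complete and, pleasantly, needs no length estimates at all.

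The genuine gap is exactly where you locate it, but flagging it does not fill it: the length additivity $\ell(uwv)=\ell(u)+\ell(w)+\ell(v)$, equivalently the claim that $u\in W_I^{I\cap{}^wJ}$ forces $uw\in W^J$, is the actual content of item (3), and it is never argued. Splicing $\ell(uw)=\ell(u)+\ell(w)$ with $\ell(wv)=\ell(w)+\ell(v)$ does not give additivity of the triple product (length additivity is not transitive in this way), and ``minimality of $u$ prevents cancellation across $w$'' is a restatement of the claim, not a proof; likewise ``follows by the same bookkeeping'' for the equivalence $xw\in W^J\Leftrightarrow x\in W_I^{I\cap{}^wJ}$ is circular, because that equivalence \emph{is} the bookkeeping to be established. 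The missing step has a short proof that you should supply: suppose $u\in W_I^{I\cap{}^wJ}$ and $\ell(uws)<\ell(uw)$ for some $s\in J$; applying the exchange condition to the reduced expression of $uw$ obtained by concatenating reduced expressions of $u$ and $w$, the element $uws$ is obtained by deleting one letter, and this letter lies either in $w$, giving $\ell(ws)<\ell(w)$ and contradicting $w\in W^J$, or in $u$, giving $wsw^{-1}=u^{-1}\hat{u}\in W_I\cap wW_Jw^{-1}=W_{I\cap{}^wJ}$ (by Kilmoyer) with $\ell(\hat{u})<\ell(u)$, contradicting $u\in W_I^{I\cap{}^wJ}$. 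Hence $uw\in W^J$, and then $\ell(uwv)=\ell(uw)+\ell(v)=\ell(u)+\ell(w)+\ell(v)$ is immediate. Without this deletion argument, not only (3) but also (1) and the converse half of (2) --- which you derive from the length formula --- remain unproven.
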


We can now formulate the main result:

\begin{theo}\label{cupprodparabolics}
The $\Z[W_\mathrm{a}]$-cochain complex $C_\mathrm{cell}^*(V^*,W_\mathrm{a};\Z)$ associated to the $W_\mathrm{a}$-triangulation of $V^*$ is a $W_\mathrm{a}$-dg-ring with homogeneous components given, for $0\le k \le n$, by
\[C^k_\mathrm{cell}(V^*,W_\mathrm{a};\Z)=\bigoplus_{\substack{I\subset S_0 \\ |I|=n-k}}\Z\left[\left[(W_\mathrm{a})_I\backslash W_\mathrm{a}\right]\right]\simeq\bigoplus_{\substack{I\subset S_0 \\ |I|=n-k}}\Z\left[\left[{}^I{W_\mathrm{a}}\right]\right]\]
and differentials defined, for any $I\subset S_0$ and $w\in W_{\mathrm{a}}$, by
\[d^k({}^I{w})=\sum_{\substack{0\le u \le k+1 \\ j_{u-1}<j<j_u}}(-1)^u\epsilon^{I\setminus\{j\}}_Iw\]
where $\{j_0<\cdots<j_k\}:=S_0\setminus I$ and, by convention, $j_{-1}=-1$, $j_{k+1}=n+1$ and for $J\subset I$,
\[{}^J_I{W_\mathrm{a}}:=\{w\in (W_\mathrm{a})_I~;~\ell(s_jw)>\ell(w),~\forall j\in J\}~~\text{and}~~{\epsilon}^J_I:=\sum_{x\in{}^J_I{W_\mathrm{a}}}x\in\Z\left[{}^J{W_\mathrm{a}}\right].\]

Moreover, the cup product
\[C^p_{\mathrm{cell}}(V^*,W_\mathrm{a};\Z) \otimes C^q_{\mathrm{cell}}(V^*,W_\mathrm{a};\Z) \stackrel{\tiny{\cup}}\longrightarrow C^{p+q}_{\mathrm{cell}}(V^*,W_\mathrm{a};\Z)\]
is induced by the unique map
\[\Z\left[\left[{}^I{W_\mathrm{a}}\right]\right] \otimes \Z\left[\left[{}^J{W_\mathrm{a}}\right]\right] \longrightarrow \Z\left[\left[{}^{I\cap J}{W_\mathrm{a}}\right]\right].\]
satisfying the formula
\[{}^Ix\cup{}^Jy=\delta_{\max(I^\complement),\min(J^\complement)}\times\left\{\begin{array}{cc}
{}^{I\cap J}{((xy^{-1})_Jy)} & \text{if }xy^{-1}\in (W_\mathrm{a})_I(W_\mathrm{a})_J, \\ 0 & \text{otherwise},\end{array}\right.\]
where $\complement$ denotes the complementary in $S_0$ and, given $w\in W_\mathrm{a}$, we denote by ${}^{I\cap J}w\in{}^{I\cap J}{W_\mathrm{a}}$ its minimal length right coset representative and if $(W_\mathrm{a})_Iw(W_\mathrm{a})_J=(W_\mathrm{a})_I(W_\mathrm{a})_J$, we let $w_J$ be the unique element $v\in (W_\mathrm{a})_J$ such that $w=uv$, with $u\in (W_\mathrm{a})_I^{I\cap J}$ and $\ell(w)=\ell(u)+\ell(v)$.
\end{theo}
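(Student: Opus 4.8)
The plan is to deduce the whole statement from the $W_\mathrm{a}$-equivariant triangulation of Theorem \ref{complexforWa} by dualizing, and then to transport the classical simplicial cup product through the dictionary ``$k$-cell $\leftrightarrow$ coset of a parabolic''. For the homogeneous components I would apply the canonical isomorphism $\Z[G/H]^\vee\simeq\Z[H\backslash G]$, $(gH)^*\mapsto Hg^{-1}$, recalled before the statement, to the modules $\Z[W_\mathrm{a}/(W_\mathrm{a})_I]$ of Theorem \ref{complexforWa}; this identifies $C^k_\mathrm{cell}$ with $\bigoplus_{|I|=n-k}\Z[(W_\mathrm{a})_I\backslash W_\mathrm{a}]\simeq\bigoplus\Z[{}^IW_\mathrm{a}]$. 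The differential $d^k$ is by definition the transpose of $\partial_{k+1}$, so it suffices to dualize each coset projection. A direct computation shows that, under the duality above, the transpose of $p^{I\setminus\{j\}}_I\colon W_\mathrm{a}/(W_\mathrm{a})_{I\setminus\{j\}}\twoheadrightarrow W_\mathrm{a}/(W_\mathrm{a})_I$ sends $(W_\mathrm{a})_Iw$ to $\sum_s(W_\mathrm{a})_{I\setminus\{j\}}sw$, the sum being over minimal-length representatives $s$ of $(W_\mathrm{a})_{I\setminus\{j\}}\backslash(W_\mathrm{a})_I$, i.e. precisely to $\epsilon^{I\setminus\{j\}}_Iw$. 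The sign $(-1)^u$ and the range $j_{u-1}<j<j_u$ are then read off from the position at which $j$ is inserted into the ordered set $S_0\setminus I$, exactly as in the simplicial boundary of Theorem \ref{complexforWa}.

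For the cup product I would start from the simplicial formula $[u_0,\dots,u_k]^*\cup[v_0,\dots,v_l]^*=\delta_{u_k,v_0}[u_0,\dots,u_k,v_1,\dots,v_l]^*$, which is legitimate here because the ``type'' map $\mathcal{E}_0(V^*)\to S_0$ sending $\widetilde w\,v_i\mapsto i$ is $W_\mathrm{a}$-invariant and injective on the vertices of each simplex, endowing the triangulation with a coherent vertex ordering and making the product $W_\mathrm{a}$-equivariant. Under the dictionary, the dual basis element ${}^Ix$ (a $p$-cochain, $|I|=n-p$, $S_0\setminus I=\{j_0<\dots<j_p\}$) corresponds to the cell $x^{-1}f_I=[x^{-1}v_{j_0},\dots,x^{-1}v_{j_p}]$. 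A $(p+q)$-simplex $\rho=z^{-1}f_{I\cap J}$ contributes to ${}^Ix\cup{}^Jy$ exactly when its front $p$-face is $x^{-1}f_I$ and its back $q$-face is $y^{-1}f_J$; using that $S_0\setminus(I\cap J)$ is the ordered union of $S_0\setminus I$ and $S_0\setminus J$, one checks that the front and back faces of $z^{-1}f_{I\cap J}$ are $z^{-1}f_I$ and $z^{-1}f_J$.

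This imposes two conditions. First, the shared middle vertex must have equal type, i.e. $\max(I^\complement)=\min(J^\complement)$ (the Kronecker delta); this simultaneously forces $(S_0\setminus I)\cap(S_0\setminus J)=\{\max(I^\complement)\}$ and hence that $I\cap J$ has the correct codimension $p+q$. Second, by Lemma \ref{stabfaceinA0} the face equalities become $z\in(W_\mathrm{a})_Ix\cap(W_\mathrm{a})_Jy$. Since $(W_\mathrm{a})_I\cap(W_\mathrm{a})_J=(W_\mathrm{a})_{I\cap J}$, this intersection is empty or a single left coset of $(W_\mathrm{a})_{I\cap J}$, so it defines at most one cell $\rho$, non-empty precisely when $xy^{-1}\in(W_\mathrm{a})_I(W_\mathrm{a})_J$. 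Assuming this, I would apply Lemma \ref{DCD}(3) to the trivial double coset ($w=1$) to write $xy^{-1}=uv$ with $u\in(W_\mathrm{a})_I^{I\cap J}$, $v=(xy^{-1})_J\in(W_\mathrm{a})_J$ and $\ell(xy^{-1})=\ell(u)+\ell(v)$; then $z:=vy$ lies in $(W_\mathrm{a})_Jy$ trivially and in $(W_\mathrm{a})_Ix$ because $z=u^{-1}x$, whence $\rho^*={}^{I\cap J}z={}^{I\cap J}\!\big((xy^{-1})_Jy\big)$, which is the stated formula (with coefficient $+1$, the orderings being consistent).

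Finally, graded associativity and the Leibniz rule need not be verified by hand: they are inherited from the natural, hence $W_\mathrm{a}$-equivariant, Alexander--Whitney dg-algebra structure on simplicial cochains, so the genuine content is entirely in the explicit formulas above. The main obstacle I anticipate is the bookkeeping of the cup-product step---reconciling the front/back-face combinatorics and the induced vertex orderings with the double-coset algebra, and confirming that no spurious sign enters---since this is exactly where the geometry of the triangulation and the Coxeter-combinatorial identities of Lemma \ref{DCD} must be made to agree.
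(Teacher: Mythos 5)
Your proposal is correct and follows essentially the same route as the paper's proof: dualize the complex of Theorem \ref{complexforWa}, use the ordered (type-preserving) simplicial cup product, translate the face conditions into parabolic-coset conditions via Lemma \ref{stabfaceinA0}, and identify the resulting coset with ${}^{I\cap J}\bigl((xy^{-1})_Jy\bigr)$ by the double-coset factorization of Lemma \ref{DCD}. The only organizational difference is that the paper first reduces to $y=1$ by equivariance and then invokes a geometric claim about simplices containing a face of the alcove, whereas you keep general $x,y$ and characterize the unique contributing $(p+q)$-simplex directly through the intersection $(W_\mathrm{a})_Ix\cap(W_\mathrm{a})_Jy$; the two reductions are equivalent in content.
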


\begin{proof}
Take a $k$-simplex $\sigma=[j_0,\dotsc,j_k]\subset\overline{\mathcal{A}_0}$ with $j_u\in S_0$ and let $I:=S_0\setminus\{j_0,\dotsc,j_k\}$ so that $(W_{\rm{a}})_{\sigma^*}=(W_{\rm{a}})_I$ and set $j_{-1}:=-1$ and $j_{k+1}:=n+1$. By definition of $d^k$, we have
\[d^k((W_{\rm{a}})_I\cdot1)_{|\overline{\mathcal{A}_0}}=d^k(\sigma^*)_{|\overline{\mathcal{A}_0}}=\sum_{\substack{0\le u \le k+1 \\ j_{u-1}<j<j_u}}(-1)^u[j_0,\dotsc,j_{u-1},j,j_u,\dotsc,j_k]^*=\sum_{\substack{0\le u \le n+1-|I| \\ j_{u-1}<j<j_u}}(-1)^u((W_{\rm{a}})_{I\setminus\{j\}}\cdot 1)\]
Thus, as $\overline{\mathcal{A}_0}$ is a fundamental domain for $W_{\rm{a}}$ in $V^*$, this yields
\[d^k((W_{\rm{a}})_I\cdot 1)=\sum_{\substack{0\le u\le n+1-|I| \\ j_{u-1}<j<j_u}}\left(\sum_{w\in{}^{I\setminus\{j\}}_I{W_{\rm{a}}}}(-1)^u((W_{\rm{a}})_{I\setminus\{j\}}\cdot w)\right).\]

Next, we establish the cup product formula. Let $\sigma\in F_a(\overline{\mathcal{A}_0})$, $\tau\in F_b(\overline{\mathcal{A}_0})$ be two simplices written as $\sigma=[i_0,\dotsc,i_a]$ and $\tau=[j_0,\dotsc,j_b]$ and let $I:=S_0\setminus\{i_0,\dotsc,i_a\}$ as well as $J:=S_0\setminus\{j_0,\dotsc,j_b\}$. Let also $x,y\in W_{\rm{a}}$. As the action of $W_{\rm{a}}$ on $\overline{\mathcal{A}_0}$ is simplicial, we have 
\[(W_{\rm{a}})_Ix\cup(W_{\rm{a}})_Jy=(\sigma^*x)\cup(\tau^*y)=((\sigma^*xy^{-1})\cup\tau^*)y\]
so we may assume that $y=1$. If $\sigma^*x\cup\tau^*\ne0$, then $x^{-1}\sigma$ and $\tau$ are included in some simplex of $V^*$ and we may choose $w_\tau\in(W_{\rm{a}})_\tau=(W_{\rm{a}})_J$ such that $x^{-1}\sigma\in w_\tau(\overline{\mathcal{A}_0})$, so that $\sigma\subset\overline{\mathcal{A}_0}\cap xw_\tau(\overline{\mathcal{A}_0})$ and thus $w_\tau\sigma=x^{-1}\sigma$. This implies that $\sigma^*x\cup\tau^*=(\sigma^*\cup\tau^*)w_\tau^{-1}$ and
\[0\ne\sigma^*\cup\tau^*=\delta_{i_a,j_0}[i_0,\dotsc,i_a,j_1,\dotsc,j_b]^*\]
and hence $\max(I^\complement)=i_a=j_0=\min(J^\complement)$. In this case, we get $\sigma^*x\cup\tau^*=(W_{\rm{a}})_{I\cap J}\cdot w_\tau^{-1}$ and, because $\sigma^*x\cup\tau^*\ne0$, we also have $xw_\tau\in(W_{\rm{a}})_I$, so $x\in(W_{\rm{a}})_I(W_{\rm{a}})_J$. The double coset decomposition from Lemma \ref{DCD} applied to the trivial double coset $(W_{\rm{a}})_Ix(W_{\rm{a}})_J$ allows us to write uniquely $x$ as $x=ux_J$ with $u\in(W_{\rm{a}})_I^{I\cap J}$ and $x_J\in(W_{\rm{a}})_J$ such that $\ell(x)=\ell(u)+\ell(x_J)$. We have $(W_{\rm{a}})_J\ni x_Jw_\tau=u^{-1}xw_\tau\in(W_{\rm{a}})_I$ so $x_Jw_\tau\in(W_{\rm{a}})_I\cap(W_{\rm{a}})_J=(W_{\rm{a}})_{I\cap J}$ and hence
\[\sigma^*x\cup\tau^*=(W_{\rm{a}})_{I\cap J}\cdot w_\tau^{-1}=(W_{\rm{a}})_{I\cap J}\cdot x_J.\]

It remains to prove that the stated formula indeed induces a well-defined map 
\[\Z\left[\left[{}^I{W_{\mathrm{a}}}\right]\right]\otimes\Z\left[\left[{}^J{W_{\mathrm{a}}}\right]\right]\to\Z\left[\left[{}^{I\cap J}{W_{\mathrm{a}}}\right]\right].\]
To see this, fix $x,y\in W_{\rm{a}}$. If $x',y'\in W_{\rm{a}}$ are such that ${}^Ix\cup{}^Jy={}^Ix'\cup{}^Jy'$, then $(xy^{-1})_Jy=(x'y^{'-1})_Jy'$ are in the same coset modulo $(W_{\rm{a}})_{I\cap J}$ and in particular in the same coset modulo $(W_{\rm{a}})_J$ and therefore ${}^Jy={}^Jy'$. As $(W_{\rm{a}})_J$ is finite, there are only finitely many such elements $y'$ and the same goes for $x'\in(W_{\rm{a}})_I(W_{\rm{a}})_Jy'$. This proves that for any $z\in{}^{I\cap J}W_{\rm{a}}$, there are only finitely many pairs $({}^Ix,{}^Jy)$ such that $z={}^Ix\cup{}^Jy$. Thus, if $a=\sum_{x\in{}^IW_{\rm{a}}}a_xx\in \Z\left[\left[{}^IW_{\rm{a}}\right]\right]$ and $b=\sum_{y\in{}^JW_{\rm{a}}}b_yy\in\Z\left[\left[{}^JW_{\rm{a}}\right]\right]$ with $a_x,b_y\in\Z$, then we can define
\[a\cup b:=\sum_{z\in{}^{I\cap J}W_{\rm{a}}}\left(\sum_{\substack{(x,y)\in{}^IW_{\rm{a}}\times{}^JW_{\rm{a}} \\ x\cup y=z}}a_xb_y\right)z\in\Z\left[\left[{}^{I\cap J}W_{\rm{a}}\right]\right]\]
and this is clearly the only way to define a map satisfying the stated formula.
\end{proof}

\begin{cor}\label{cupproductforT}
The $\Z[W]$-cochain complex $C^*_{\mathrm{cell}}(T,W;\Z)$ associated to the $W$-triangulation of $T=V^*/Q^\vee$ induced by the $W_\mathrm{a}$-triangulation of $V^*$ is given by
\[C^*_\mathrm{cell}(T,W;\Z)=\mathrm{Def}^{W_\mathrm{a}}_W\left(C^*_\mathrm{cell}(V^*,W_\mathrm{a};\Z)\right).\]
In other words, if $\pi : W_\mathrm{a} \twoheadrightarrow W$ is the projection, then
\[C^k_\mathrm{cell}(T,W;\Z)=\bigoplus_{\substack{I\subset S_0 \\ |I|=n-k}}\Z\left[\pi\left({}^I{W_\mathrm{a}}\right)\right]\simeq\bigoplus_{\substack{I\subset S_0 \\|I|=n-k}}\Z\left[\pi((W_\mathrm{a})_I)\backslash W\right],\]
with differentials given, for any $I\subset S_0$ and $w\in W_{\mathrm{a}}$, by
\[d^k(\pi({}^I{w}))=\sum_{\substack{0\le u \le k+1 \\ j_{u-1}<j<j_u}}(-1)^u\pi\left(\epsilon^{I\setminus\{j\}}_Iw\right),~\epsilon_I^J=\sum_{x\in{}^J_I{W_\mathrm{a}}}x\]
where $\{j_0<\cdots j_k\}:=S_0\setminus I$. The cup product in cohomology is induced by the formula
\[\pi({}^Ix)\cup\pi({}^Jy)=\delta_{\max(I^\complement),\min(J^\complement)}\times\left\{\begin{array}{cc}
\pi\left({}^{I\cap J}((xy^{-1})_Jy)\right) & \text{if }xy^{-1}\in(W_\mathrm{a})_I(W_\mathrm{a})_J \\ 0 & \text{otherwise}.\end{array}\right.\]
\end{cor}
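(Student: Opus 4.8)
The plan is to derive the whole statement by applying the deflation machinery of Lemma \ref{GCWandSDP} to the decomposition $W_\mathrm{a} = Q^\vee \rtimes W$, and then to obtain the differentials and the cup product from Theorems \ref{complexforWa} and \ref{cupprodparabolics} by pushing them through the projection $\pi : W_\mathrm{a} \twoheadrightarrow W$. First I would apply Lemma \ref{GCWandSDP} with $G = W_\mathrm{a}$, $N = Q^\vee$, $H = W$ and $X = V^*$. Since $X/N = V^*/Q^\vee = T$ is a torus, hence Hausdorff, the lemma yields a $W$-equivariant triangulation of $T$ together with the chain-level identification
\[ C^\mathrm{cell}_*(T,W;\Z) \simeq \mathrm{Def}^{W_\mathrm{a}}_W\big( C^\mathrm{cell}_*(V^*,W_\mathrm{a};\Z) \big). \]
Using the formula for deflation on transitive sets recalled just before that lemma, each component $\Z[W_\mathrm{a}/(W_\mathrm{a})_I]$ becomes $\Z[\pi(W_\mathrm{a})/\pi((W_\mathrm{a})_I)] = \Z[W/\pi((W_\mathrm{a})_I)]$, which identifies the homogeneous components.

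Next I would dualize. As $T$ has finitely many cells, $C^\mathrm{cell}_*(T,W;\Z)$ is a finite complex of finite-rank free $\Z$-modules, so $C_\mathrm{cell}^*(T,W;\Z)$ is its termwise $\Z$-dual; dualizing the permutation modules sends $\Z[W/\pi((W_\mathrm{a})_I)]$ to $\Z[\pi((W_\mathrm{a})_I)\backslash W]$, which is exactly $\mathrm{Def}^{W_\mathrm{a}}_W \Z[{}^I W_\mathrm{a}]$. Since $\pi$ is a ring homomorphism $\Z[W_\mathrm{a}] \to \Z[W]$ compatible with the module structures, taking $Q^\vee$-coinvariants amounts to applying $\pi$, so the cochain differentials are the $\pi$-images of those in Theorem \ref{cupprodparabolics}, giving the stated formula. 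The one point deserving care is that $W_\mathrm{a}$ is infinite, so I would dualize over the finite $W$-complex on $T$ rather than over $V^*$ first; this is what makes dualization and deflation commute here.

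For the cup product I would argue geometrically. The covering map $p : V^* \to T$ is $W$-equivariant and cellular, and the simplicial cup product is computed locally on simplices; hence the product of two cochains on $T$ is obtained by lifting their supporting cells into the fundamental domain $\overline{\mathcal{A}_0}$, applying the formula of Theorem \ref{cupprodparabolics} upstairs in $W_\mathrm{a}$, and projecting by $\pi$. Because that formula is $W_\mathrm{a}$-equivariant, its $\pi$-image is well defined and yields the stated product. This transfer of the dg-algebra structure through $p$ is the step I expect to be the main obstacle: one must check that the double-coset datum $(xy^{-1})_J$ and the condition $xy^{-1} \in (W_\mathrm{a})_I (W_\mathrm{a})_J$ are resolved before, not after, applying the non-injective map $\pi$, so that projecting the genuinely $W_\mathrm{a}$-level expression produces a well-defined and associative product on $C^*_\mathrm{cell}(T,W;\Z)$.

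Finally, for the cohomology statement, the cellular cochain complex computes singular cohomology as a graded ring, so $H^\bullet(C^*_\mathrm{cell}(T,W;\R)) = H^\bullet(T,\R)$. As $T$ is an $n$-dimensional torus, the Künneth theorem (equivalently, the standard computation for $T = V^*/Q^\vee$) gives $H^\bullet(T,\R) = \Lambda^\bullet H^1(T,\R)$, and $H^1(T,\R) \simeq \ho(Q^\vee,\R)$ is $W$-equivariantly isomorphic to $V$, and hence to $V^*$, the reflection representation being self-dual for a Weyl group. Over $\Z$, $H^\bullet(T,\Z) = \Lambda^\bullet_\Z H^1(T,\Z)$ is a full lattice in $\Lambda^\bullet V^*$, i.e. the asserted integral form.
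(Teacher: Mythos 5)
Your proposal is correct and follows essentially the same route as the paper, which states this corollary without a separate proof precisely because it is the immediate application of Lemma \ref{GCWandSDP} (with $G=W_\mathrm{a}=Q^\vee\rtimes W$, $N=Q^\vee$, $H=W$, $X=V^*$) to Theorems \ref{complexforWa} and \ref{cupprodparabolics}, pushing all formulas through $\pi$ — exactly what you do. Your extra care points (dualizing the finite complex on $T$ so that dualization and deflation commute, resolving the double-coset datum $xy^{-1}\in(W_\mathrm{a})_I(W_\mathrm{a})_J$ in $W_\mathrm{a}$ \emph{before} applying the non-injective $\pi$, and invoking self-duality of the reflection representation to identify $\Lambda^\bullet V$ with $\Lambda^\bullet(V^*)$) are all sound and, if anything, slightly more explicit than the paper itself.
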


\begin{rem}\label{lambda_homo}
There are $W$-equivariant isomorphisms of graded algebras
\[H^*(C^*_{\rm{cell}}(T,W;\Z))\simeq H^*(T;\Z)\simeq \Lambda^*(P).\]
The first isomorphism just results from the fact that $C^*_{\rm{cell}}(T,W;\Z)$ is the dg-ring associated to a $W$-equivariant simplicial structure on $T$. Next, the K\"{u}nneth formula provides a natural isomorphism of graded algebras 
\[\Lambda^*(H^1(T;\Z))\stackrel{\tiny{\sim}}\longto H^*(T;\Z).\]
More precisely, let $\mathbf{Tori}$ be the category of tori (the full subcategory of the category of Lie groups consisting of compact connected abelian Lie groups)  and $\mathbf{C}\subset\mathbf{Tori}$ be the full subcategory consisting of powers of the circle $\Sph^1$. Let $n\in\N^*$ and $S_n:=(\Sph^1)^n\in\mathbf{C}$.  If $\alpha\in H^1(\Sph^1)$ denotes the Poincar\'e dual of the fundamental class of a point in $\Sph^1$, then we have an isomorphism $H^*(\Sph^1)\simeq \Z[\alpha]/(\alpha^2)$. For $1\le i \le n$, let $\pi_i : S_n\twoheadrightarrow\Sph^1$ denote the canonical projection and $\alpha_i:=\pi_i^*(\alpha)\in H^1(S_n)$. Then $H^1(S_n)\simeq\bigoplus_i \Z\alpha_i\simeq\Z^n$ and letting $\iota_i : \Sph^1\hookrightarrow S_n$ denote the natural inclusion, we have
\[\Lambda^*(H^1(S_n))=\Z[\alpha_1,\dotsc,\alpha_n]/(\alpha_i\alpha_j+\alpha_j\alpha_i)\simeq\bigotimes_{i=1}^n\Z[\alpha_i]/(\alpha_i^2)\simeq\bigotimes_i H^*(\iota_i(\Sph^1)).\]
Now, post-composing this with the K\"{u}nneth isomorphism $\bigotimes_i H^*(\iota_i(\Sph^1))\stackrel{\tiny{\sim}}\to H^*(S_n)$ yields an isomorphism
\[\gamma_{S_n} : \Lambda^*(H^1(S_n))\stackrel{\tiny{\sim}}\longto H^*(S_n).\]
Since the cross product is a multi-functor, the collection $\gamma:=(\gamma_{S_n})_{n\in\N}$ yields a natural isomorphism 
\[\gamma : (\Lambda^*\circ H^1)_{|\mathbf{C}}\stackrel{\tiny{\sim}}\longto H^*_{|\mathbf{C}}\]
of functors from $\mathbf{C}$ to the category $\mathbf{gr\mhyphen Rng}$ of graded rings. Now, the inclusion $j : \mathbf{C}\subset\mathbf{Tori}$ is an equivalence. If $r : \mathbf{Tori}\to\mathbf{C}$ denotes its inverse and if $\tau : {\rm Id}_{\mathbf{Tori}}\stackrel{\tiny{\sim}}\longto j\circ r$, then for a torus $S$, we let
\[\eta_S:=\tau_S^*\circ \gamma_{r(S)}\circ\Lambda^*(\tau_S^1)^{-1}.\]
It is straightforward to check that the collection $\eta:=(\eta_S)_S$ defines a natural isomorphism
\[\eta : \Lambda^*\circ H^1\stackrel{\tiny{\sim}}\longto H^*\]
of functors $\mathbf{Tori}\longto\mathbf{gr\mhyphen Rng}$.

The first homology group $H_1(T;\Z)$ is naturally isomorphic to the abelianization $\pi_1(T)^{\rm{ab}}$ of the fundamental group $\pi_1(T)$, which in turn is naturally isomorphic to $Y(T)=Q^\vee$, invoking the homotopy long exact sequence associated to the covering map $V^*\twoheadrightarrow V^*/Y(T)\simeq T$. By the universal coefficient theorem, we finally obtain natural isomorphisms
\[H^1(T;\Z)\simeq\ho(H_1(T;\Z),\Z)\simeq\ho(Q^\vee,\Z)\simeq P.\]

Applying the preceding argument to the case where the Lie group $K$ is no longer simply-connected, yields a natural (in particular, $W$-equivariant) isomorphism of graded algebras
\[\Lambda^*(X(T))\stackrel{\tiny{\sim}}\longto H^*(T;\Z).\]
\end{rem}

\section{The general case}

\subsection{The fundamental group as symmetries of an alcove}
\hfill

The \emph{extended affine Weyl group} $\widehat{W_\mathrm{a}}:=P^\vee\rtimes W$ acts on alcoves (transitively since $W_\mathrm{a}\unlhd\widehat{W_\mathrm{a}}$ does) but not simply-transitively. We introduce the stabilizer
\[\Omega:=\{\widehat{w}\in\widehat{W_\mathrm{a}}~;~\widehat{w}(\mathcal{A}_0)=\mathcal{A}_0\}\simeq \widehat{W_\mathrm{a}}/W_\mathrm{a}\simeq P^\vee/Q^\vee\simeq P/Q,\]
a finite abelian group and we see that we have a decomposition $\widehat{W}_{\rm{a}}\simeq W_{\rm{a}}\rtimes\Omega$. The Table \ref{extendeddynkindiagrams} depicts the fundamental groups of irreducible root systems. Recall that a fundamental weight $\varpi_i$ is called \emph{minuscule} if $n_i=1$ and that minuscule weights form a set of representatives of the non-trivial classes in $P/Q$ (see \cite[VI, \S 2.3, Corollary]{bourbaki456}). Dually, we have the same notion and result for \emph{minuscule coweights}.

\begin{prop-def}[\emph{\cite[VI, \S 2.3, Proposition 6]{bourbaki456}}]\label{descriptionOmega}
Define $M:=\{i \in S~;~n_i=1\}$ and let $w_0\in W$ be the longest element. For $i\in M$, let $w_0^i$ be the longest element of the Weyl group of the subsystem of $\Phi$ generated by $\{\alpha_j\}_{j\ne i}\subset\Pi$ and let $w_i:=w^i_0w_0$. Then $\mathrm{t}_{\varpi_i^\vee}w_i\in\Omega$ and the following map is a bijection:
\[\begin{array}{ccc}
M & \longrightarrow & \Omega\setminus\{1\} \\ i & \longmapsto & \omega_i:=\mathrm{t}_{\varpi_i^\vee}w_i \end{array}\]
\end{prop-def}

\indent We now have to see what happens if the cocharacter lattice $Y=X(T)^\wedge$ is such that $Q^\vee \subsetneq Y \subsetneq P^\vee$. To simplify the notation, we identify a lattice $\Lambda\subset V^*$ with its translation group $\mathrm{t}(\Lambda)\subset \mathrm{Aff}(V^*)$ and we define the \emph{intermediate affine Weyl group} $W_\Lambda:=\Lambda\rtimes W$. There is a correspondence between $W$-lattices $Q^\vee\subseteq\Lambda\subseteq P^\vee$ and the subgroups of $\Omega$.

\begin{lem}\label{identlatticessubgroupsofOmega}
Recall that $\widehat{W_\mathrm{a}}\simeq W_\mathrm{a}\rtimes\Omega$ and denote by 
\[\pi : \widehat{W_\mathrm{a}}\longtwoheadrightarrow \Omega\]
the natural projection. We have a bijective correspondence
\[\begin{array}{ccc}
\left\{\Lambda~;~Q^\vee\subseteq\Lambda\subseteq P^\vee~\text{is a } W\text{-lattice}\right\} & \stackrel{\tiny{\mathrm{1-1}}}\longleftrightarrow & \left\{H\le \Omega\right\} \\
\Lambda & \longmapsto & \pi(W_\Lambda) \\ \pi^{-1}(H)\cap P^\vee & \longmapsfrom & H \end{array}\]
\end{lem}

\subsection{A $\widehat{W_\mathrm{a}}$-triangulation of $V^*$ from the barycentric subdivision of an alcove}
\hfill

In order to obtain a $W_Y$-triangulation of the torus $V^*/Y$, we just have to exhibit an $\Omega_Y$-triangulation of the alcove $\overline{\mathcal{A}_0}$. As the group $\Omega_Y$ acts by affine automorphisms of $\overline{\mathcal{A}_0}$, the construction follows from the next easy result about simplicial subdivisions. 

Recall that, given a polytope $\mathcal{P}$, its \emph{barycentric subdivision} is the simplicial complex $\mathrm{Sd}(\mathcal{P})$ whose $k$-simplices are increasing chains of non-empty faces of $\mathcal{P}$ of length $k+1$. A $k$-simplex $(f_0,f_1,\dotsc,f_k)$ of $\mathrm{Sd}(\mathcal{P})$ may be geometrically realized as $\mathrm{conv}(\mathrm{bar}(f_0),\dotsc,\mathrm{bar}(f_k))$, where $\mathrm{bar}(f_i)$ stands for the barycenter of the face $f$.

\begin{lem}\label{barysubdivisequiv}
If $\mathcal{P}$ is a polytope, then $\mathrm{Sd}(\mathcal{P})$ is an $\mathrm{Aut}(\mathcal{P})$-triangulation of $\mathcal{P}$.
\end{lem}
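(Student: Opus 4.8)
The plan is to verify directly the two defining conditions of a $\mathrm{Aut}(\mathcal{P})$-CW-complex from Definition \ref{GCW}, after noting that $\mathrm{Sd}(\mathcal{P})$ is a genuine triangulation of $\mathcal{P}$. The latter is a standard fact about order complexes of face posets: the chains of non-empty faces of $\mathcal{P}$, realized via barycenters, assemble into a simplicial complex whose geometric realization is $\mathcal{P}$ itself. So the real content is that the affine symmetries act compatibly with this simplicial structure.

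First I would record that every $g\in\mathrm{Aut}(\mathcal{P})$, being an affine automorphism of $\mathcal{P}$, permutes its faces while preserving both dimension and the inclusion relation; hence $g$ acts on the face lattice $F(\mathcal{P})$ as an order-automorphism. Combined with the affine-equivariance of barycenters, namely $g\cdot\mathrm{bar}(f)=\mathrm{bar}(g\cdot f)$ (the barycenter being an average of vertices, which affine maps preserve), this shows that $g$ sends a chain $f_0\subsetneq\cdots\subsetneq f_k$ to the chain $gf_0\subsetneq\cdots\subsetneq gf_k$, and correspondingly the simplex $\mathrm{conv}(\mathrm{bar}(f_0),\dotsc,\mathrm{bar}(f_k))$ to $\mathrm{conv}(\mathrm{bar}(gf_0),\dotsc,\mathrm{bar}(gf_k))$, again a simplex of $\mathrm{Sd}(\mathcal{P})$. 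Thus $\mathrm{Aut}(\mathcal{P})$ permutes the $k$-simplices of $\mathrm{Sd}(\mathcal{P})$, which furnishes the required action on $\mathcal{E}_k(\mathrm{Sd}(\mathcal{P}))$ for every $k$.

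For the second condition, I would suppose that $g$ stabilizes a simplex $\sigma=(f_0,\dotsc,f_k)$ setwise. Since $g$ preserves inclusion, it carries the strictly increasing chain to a strictly increasing chain with the same underlying set; a strictly monotone self-bijection of a finite totally ordered set is the identity, so $gf_i=f_i$ for all $i$. By barycenter-equivariance, $g$ then fixes each vertex $\mathrm{bar}(f_i)$ of $\sigma$. As these vertices are affinely independent and $g$ is affine, $g$ restricts to the identity on their affine span, hence fixes $\sigma$ pointwise. This is precisely the condition that $g\sigma=\sigma$ forces $gx=x$ for all $x\in\sigma$.

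The only genuinely nontrivial point—and what I would flag as the crux—is this second condition: a symmetry stabilizing a simplex must not permute its vertices nontrivially. This rests on the observation that the vertices of each simplex of $\mathrm{Sd}(\mathcal{P})$ are barycenters of faces of \emph{pairwise distinct} dimensions, so that dimension-preservation of $g$ rules out any nontrivial permutation of them. This is exactly the structural advantage of passing to the barycentric subdivision, and it is why $\mathrm{Sd}(\mathcal{P})$—rather than the original polytopal cell structure—yields an equivariant triangulation for the full automorphism group.
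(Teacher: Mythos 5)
Your proof is correct and follows essentially the same route as the paper's: both reduce to showing that an automorphism stabilizing a simplex of $\mathrm{Sd}(\mathcal{P})$ must fix its vertices, using the fact that the vertices are barycenters of faces of pairwise distinct dimensions so no nontrivial permutation is possible, and then conclude pointwise fixing by affinity. The only cosmetic difference is that the paper identifies $\gamma(f_i)=f_j$ via disjointness of relative interiors of faces, whereas you invoke affine equivariance of barycenters together with injectivity of the barycenter map; these are interchangeable justifications of the same step.
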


\begin{proof}
It is well-known that $\mathrm{Sd}(\mathcal{P})$ triangulates $\mathcal{P}$ and it is clear that $\Gamma:=\mathrm{Aut}(\mathcal{P})$ permutes the simplices of $\mathrm{Sd}(\mathcal{P})$. We have to prove that, for a simplex $\sigma=(f_0,\dotsc,f_k)$ of $\mathrm{Sd}(\mathcal{P})$ and $\gamma\in\Gamma$, if $\gamma\sigma=\sigma$, then $\gamma x=x$ for each $x\in|\sigma|$ in the realization $|\sigma|$ of $\sigma$. Take $0\le i \le k$. The point $\mathrm{bar}(f_i)$ is taken by $\gamma$ to some $\mathrm{bar}(f_j)$ and since the barycenter of a polytope lies in its relative interior, we have $\gamma(\mathring{f_i})\cap\mathring{f_j}\ne\emptyset$ and as $\gamma$ acts as an automorphism of $\mathcal{P}$, this forces $\gamma(f_i)=f_j$ and $\dim(f_i)=\dim(\gamma(f_i))=\dim(f_j)$. But the sequence $(\dim f_0,\dotsc,\dim f_k)$ is increasing, so $f_i=f_j$ and $\mathrm{bar}(f_i)=\mathrm{bar}(f_j)=\gamma(\mathrm{bar}(f_i))$. The conclusion now follows from the equality $|\sigma|=\mathrm{conv}(\mathrm{bar}(f_0),\dotsc,\mathrm{bar}(f_k))$.
\end{proof}

From this we deduce that $W_\mathrm{a}\cdot\mathrm{Sd}(\overline{\mathcal{A}_0})$ is a $W_Y$-triangulation of $V^*$ for all $Q^\vee\subset Y\subset P^\vee$ at once. We have $\mathrm{vert}(\overline{\mathcal{A}_0})\approx S_0=\{0,\dotsc,n\}$ and $\overline{\mathcal{A}_0}\simeq\Delta^n$, so that the face lattice of $\overline{\mathcal{A}_0}$ is $F(\overline{\mathcal{A}_0})\simeq(\mathscr{P}(S_0),\subset)$. This gives a description of $F(\mathrm{Sd}(\overline{\mathcal{A}_0}))$: for $0\le d \le n$, we have
\[F_d(\mathrm{Sd}(\overline{\mathcal{A}_0}))=\{Z_\bullet=(Z_0,Z_1,\dotsc,Z_d)~;~\forall i,~\emptyset\ne Z_i\subset S_0,~Z_i\subsetneq Z_{i+1}\}\]
and $Z_\bullet\subset Z_\bullet'$ if $Z_\bullet$ is a subsequence of $Z_\bullet'$.

\begin{lem}\label{decofstabs}
The group $\Omega_Y$ acts on $\overline{\mathcal{A}_0}$ and this induces an action on $S_0$. The resulting action on $F(\mathrm{Sd}(\overline{\mathcal{A}_0}))$ corresponds to the action of $\Omega_Y$ on $|\mathrm{Sd}(\overline{\mathcal{A}_0})|=\overline{\mathcal{A}_0}$. Moreover, for $Z_\bullet\in F_d(\mathrm{Sd}(\overline{\mathcal{A}_0}))$, the stabilizer of $Z_\bullet$ in $W_Y$ decomposes as
\[(W_Y)_{Z_\bullet}=(W_\mathrm{a})_{Z_\bullet}\rtimes (\Omega_Y)_{Z_\bullet}=(W_\mathrm{a})_{S_0\setminus Z_d}\rtimes (\Omega_Y)_{Z_\bullet}~~\text{and}~~(\Omega_Y)_{Z_\bullet}=\bigcap_{i=0}^d\Omega_{Z_i}.\]
\end{lem}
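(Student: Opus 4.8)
\emph{Plan.} The plan is to reduce both assertions to the fundamental-domain property of $\overline{\mathcal{A}_0}$ for $W_\mathrm{a}$ (each $W_\mathrm{a}$-orbit meets $\overline{\mathcal{A}_0}$ in at most one point, see \cite[\S 4.5]{humphreys-reflectiongroups}), combined with Lemma \ref{barysubdivisequiv} and Lemma \ref{stabfaceinA0}, and to exploit the splitting $W_Y\simeq W_\mathrm{a}\rtimes\Omega_Y$ from Proposition \ref{identlatticessubgroupsofOmega}.

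For the first assertion and the formula for $(\Omega_Y)_{Z_\bullet}$, I would first note that $\Omega_Y\le\Omega$ fixes $\mathcal{A}_0$, hence acts on $\overline{\mathcal{A}_0}$ by affine automorphisms and permutes $\vertices(\overline{\mathcal{A}_0})\approx S_0$. Such automorphisms preserve dimensions of faces and their inclusion order, and being affine they commute with taking barycenters, i.e. $\omega\cdot\mathrm{bar}(f)=\mathrm{bar}(\omega\cdot f)$; under the identification of a face with its vertex set and of a $d$-simplex of $\mathrm{Sd}(\overline{\mathcal{A}_0})$ with a chain $Z_\bullet$, this shows that the combinatorial action $\omega\cdot Z_\bullet=(\omega Z_0,\dotsc,\omega Z_d)$ is the one induced by the geometric action on $|\mathrm{Sd}(\overline{\mathcal{A}_0})|=\overline{\mathcal{A}_0}$. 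Since the cardinalities $|Z_0|<\dotsb<|Z_d|$ are strictly increasing and $\omega$ preserves them, the equality $\omega Z_\bullet=Z_\bullet$ is equivalent to $\omega Z_i=Z_i$ for every $i$, which yields $(\Omega_Y)_{Z_\bullet}=\bigcap_{i=0}^d\Omega_{Z_i}$.

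For the $W_\mathrm{a}$-part, I would use that $|Z_\bullet|$ meets the relative interior of the face $f$ with $\vertices(f)=Z_d$ (any convex combination giving positive weight to $\mathrm{bar}(f)$ lies in the relative interior of $f$). If $w\in W_\mathrm{a}$ stabilizes $|Z_\bullet|$, then for $x\in|Z_\bullet|$ both $x$ and $w(x)$ lie in $\overline{\mathcal{A}_0}$, so the fundamental-domain property forces $w(x)=x$; thus $w$ fixes $|Z_\bullet|$ pointwise, and in particular fixes an interior point $p$ of $f$. By Lemma \ref{stabfaceinA0} and \cite[\S 4.8]{humphreys-reflectiongroups}, the stabilizer of such a $p$ equals the stabilizer of the face $f$, namely $(W_\mathrm{a})_{S_0\setminus Z_d}$; the reverse inclusion is clear since $(W_\mathrm{a})_{S_0\setminus Z_d}$ fixes $f\supseteq|Z_\bullet|$ pointwise. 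This gives $(W_\mathrm{a})_{Z_\bullet}=(W_\mathrm{a})_{S_0\setminus Z_d}$.

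The main obstacle is the semidirect-product factorization $(W_Y)_{Z_\bullet}=(W_\mathrm{a})_{Z_\bullet}\rtimes(\Omega_Y)_{Z_\bullet}$, for which I must split a stabilizing element into components that each stabilize $Z_\bullet$. Writing $g=w\omega\in(W_Y)_{Z_\bullet}$ with $w\in W_\mathrm{a}$ and $\omega\in\Omega_Y$, the key point is that for every $x\in|Z_\bullet|$ the point $\omega(x)$ again lies in $\overline{\mathcal{A}_0}$ (as $\omega$ preserves it), while $w(\omega(x))=g(x)\in|Z_\bullet|\subseteq\overline{\mathcal{A}_0}$; the fundamental-domain property then forces $w(\omega(x))=\omega(x)$, i.e. $g(x)=\omega(x)$. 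Hence $\omega$ and $g$ agree on $|Z_\bullet|$, so $\omega(|Z_\bullet|)=g(|Z_\bullet|)=|Z_\bullet|$ and $\omega\in(\Omega_Y)_{Z_\bullet}$; consequently $w=g\omega^{-1}\in(W_\mathrm{a})_{Z_\bullet}$. This proves $(W_Y)_{Z_\bullet}=(W_\mathrm{a})_{Z_\bullet}\cdot(\Omega_Y)_{Z_\bullet}$, and uniqueness of the factorization together with $W_\mathrm{a}\unlhd W_Y$ and $W_\mathrm{a}\cap\Omega_Y=1$ upgrades this product to the claimed internal semidirect product.
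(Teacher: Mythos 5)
Your proof is correct and follows essentially the same route as the paper: decompose an element of $(W_Y)_{Z_\bullet}$ as $w\omega$ via $W_Y\simeq W_\mathrm{a}\rtimes\Omega_Y$, use that $\Omega_Y$ preserves $\overline{\mathcal{A}_0}$ together with the fundamental-domain property of $\overline{\mathcal{A}_0}$ for $W_\mathrm{a}$ to force each factor to stabilize $Z_\bullet$ separately, and identify $(W_\mathrm{a})_{Z_\bullet}$ with the parabolic $(W_\mathrm{a})_{S_0\setminus Z_d}$ via Lemma \ref{stabfaceinA0}. The only difference is that you spell out details the paper treats as immediate (the barycenter/dimension argument for the $\Omega_Y$-action and the identity $(\Omega_Y)_{Z_\bullet}=\bigcap_i\Omega_{Z_i}$, and deriving pointwise fixing from setwise stabilization rather than invoking the $G$-CW structure), which is a refinement, not a different method.
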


\begin{proof}
The first statement is obvious. Write $Z_\bullet=(Z_0\subsetneq\cdots\subsetneq Z_d)$ and let $\widehat{w}:=w\omega_j\in(W_Y)_{Z_\bullet}$ with $w\in W_\mathrm{a}$ and $\omega_j\in\Omega_Y$. Then, for every $x\in|Z_{\bullet}|$, we have $\widehat{w}(x)=w(\omega_j(x))=x$ and $\omega_j(x)\in\overline{\mathcal{A}_0}$ so $x=\omega_j(x)$ and $\omega_j\in(\Omega_Y)_{Z_\bullet}$. On another hand we get $w(x)=x$ so $w\in(W_\mathrm{a})_{Z_\bullet}$. Now, an element $w\in W_\mathrm{a}$ fixes $Z_\bullet$ if and only if it fixes the maximal face $Z_d$ of $Z_\bullet$. This is indeed the parabolic subgroup $(W_\mathrm{a})_{S_0\setminus Z_d}$.
\end{proof}

\begin{center}
\resizebox{0.8\textwidth}{!}{
\begin{tabular}{|c|c|c|c|}
\hline
Type & Extended Dynkin diagram & Fundamental group $\Omega\simeq P/Q$ & Non-trivial elements of $\Omega\le\mathrm{Aut}({\mathcal{D}ynkin_0})$ \\
\hline
\hline
$\widetilde{A_1}$ & \begin{minipage}{0.5\textwidth}\centering \begin{tikzpicture}
	\coordinate (a) at (0,0);
	\coordinate (b) at (1,0);
	
	\draw (a) node[below]{$1$};
	\draw (b) node[below]{$0$};
	
	\draw (a)--(b);
	
	\fill[fill=white] (a) circle (2.5pt);
	\fill[fill=white] (b) circle (2.5pt);
	\node[mark size=2.5pt] at (b) {\pgfuseplotmark{otimes}};
	\draw (a) circle (2.5pt);
	
	\draw (1/2,0) node[above]{$\infty$};
\end{tikzpicture}\end{minipage} & $\Z/2\Z$ & $\omega_1=(0,1)$ \\
\hline
 $\widetilde{A_n}~(n\ge2)$ & \begin{minipage}{0.5\textwidth}\centering \begin{tikzpicture}[scale=1]
	\coordinate (a) at (-2,0);
	\coordinate (b) at (-1,0);
	\coordinate (c) at (0,0);
	\coordinate (d) at (1,0);
	\coordinate (e) at (2,0);
	\coordinate (f) at (0,1);
	\coordinate (bc) at (-1/2,0);
	\coordinate (cd) at (1/2,0);
	
	\draw (a) node[below]{$1$};
	\draw (b) node[below]{$2$};
	\draw (c) node{$\cdots$};
	\draw (d) node[below]{${n-1}$};
	\draw (e) node[below]{$n$};
	\draw (f) node[above]{$0$};
	
	\draw (a)--(b)
	(b)--(bc)
	(cd)--(d)
	(d)--(e)
	(e)--(f)
	(f)--(a);
	
	\fill[fill=white] (a) circle (2.5pt);
	\fill[fill=white] (b) circle (2.5pt);
	\fill[fill=white] (d) circle (2.5pt);
	\fill[fill=white] (e) circle (2.5pt);
	\fill[fill=white] (f) circle (2.5pt);
	
	\draw (a) circle (2.5pt);
	\draw (b) circle (2.5pt);
	\draw (d) circle (2.5pt);
	\draw (e) circle (2.5pt);
	\node[mark size=2.5pt] at (f) {\pgfuseplotmark{otimes}};
\end{tikzpicture}\end{minipage} & $\Z/(n+1)\Z$ & $\begin{array}{ll} \omega_{1}=(0,1,2,\cdots,n) \\ \\ \omega_{i}=(\omega_{1})^i,~1\le i \le n\end{array}$ \\
\hline
 & & & \\
$\widetilde{B_2}=\widetilde{C_2}$ & \begin{minipage}{0.5\textwidth}\centering \begin{tikzpicture}[scale=1]
	\coordinate (a) at (0,0);
	\coordinate (e) at (1,0);
	\coordinate (f) at (2,0);
	
	\draw (a) node[below]{$0$};
	\draw (e) node[below]{$1$};
	\draw (f) node[below]{$2$};
	
	\draw[decoration={markings, mark=at position 0.75 with {\arrow{angle 60}}},postaction={decorate},double distance=2.5pt] (a)--(e);
	\draw[decoration={markings, mark=at position 0.75 with {\arrow{angle 60}}},postaction={decorate},double distance=2.5pt] (f)--(e);
	
	\fill[fill=white] (a) circle (2.5pt);
	\node[mark size=2.5pt] at (a) {\pgfuseplotmark{otimes}};
	\fill[fill=black] (e) circle (2.5pt);
	\fill[fill=white] (f) circle (2.5pt);
    \draw (f) circle (2.5pt);
\end{tikzpicture}\end{minipage} & $\Z/2\Z$ & $\omega_{1}=(0,2)$ \\
 & & & \\
\hline
$\widetilde{B_n}~(n\ge3)$ & \begin{minipage}{0.5\textwidth}\centering \begin{tikzpicture}[scale=1]
	\coordinate (a) at (-1/2,1.732/2);
	\coordinate (b) at (-1/2,-1.732/2);
	\coordinate (c) at (1/2,0);
	\coordinate (d) at (1+1/2,0);
	\coordinate (e) at (2+1/2,0);
	\coordinate (f) at (3+1/2,0);
	\coordinate (g) at (4+1/2,0);
	
	\draw (a) node[left]{$1$};
	\draw (b) node[left]{$0$};
	\draw (c) node[below]{$2$};
	\draw (d) node[below]{$3$};
	\draw (e) node{$\cdots$};
	\draw (f) node[below]{${n-1}$};
	\draw (g) node[below]{$n$};
	
	\draw (a)--(c)
	(b)--(c)
	(c)--(d)
	(d)--(2,0)
	(3,0)--(f);
	\draw[decoration={markings, mark=at position 0.75 with {\arrow{angle 60}}},postaction={decorate},double distance=2.5pt] (f)--(g);
	
	\fill[fill=white] (a) circle (2.5pt);
	\fill[fill=black] (c) circle (2.5pt);
	\fill[fill=black] (d) circle (2.5pt);
	\fill[fill=black] (f) circle (2.5pt);
	\fill[fill=black] (g) circle (2.5pt);
	\fill[fill=white] (b) circle (2.5pt);
	\node[mark size=2.5pt] at (b) {\pgfuseplotmark{otimes}};
	\draw (a) circle (2.5pt);
\end{tikzpicture}\end{minipage} & $\Z/2\Z$ & $\omega_{1}=(0,1)$ \\
\hline
 & & & \\
$\widetilde{C_n}~(n\ge3)$ & \begin{minipage}{0.5\textwidth}\centering \begin{tikzpicture}[scale=1]
	\coordinate (a) at (-3,0);
	\coordinate (b) at (-2,0);
	\coordinate (c) at (-1,0);
	\coordinate (d) at (0,0);
	\coordinate (e) at (1,0);
	\coordinate (f) at (2,0);
	
	\draw (a) node[below]{$0$};
	\draw (b) node[below]{$1$};
	\draw (c) node[below]{$2$};
	\draw (d) node{$\cdots$};
	\draw (e) node[below]{${n-1}$};
	\draw (f) node[below]{$n$};
	
	\draw[decoration={markings, mark=at position 0.75 with {\arrow{angle 60}}},postaction={decorate},double distance=2.5pt] (a)--(b);
	\draw[decoration={markings, mark=at position 0.75 with {\arrow{angle 60}}},postaction={decorate},double distance=2.5pt] (f)--(e);
	\draw (b)--(c)
	(c)--(-1/2,0)
	(1/2,0)--(e);
	
	\fill[fill=white] (a) circle (2.5pt);
	\node[mark size=2.5pt] at (a) {\pgfuseplotmark{otimes}};
	\fill[fill=black] (b) circle (2.5pt);
	\fill[fill=black] (c) circle (2.5pt);
	\fill[fill=black] (e) circle (2.5pt);
	\fill[fill=white] (f) circle (2.5pt);
	
    \draw (f) circle (2.5pt);
\end{tikzpicture}\end{minipage} & $\Z/2\Z$ & $\displaystyle{\omega_{n}=(0,n)\prod_{i=1}^{\left\lfloor \frac{n-1}{2}\right\rfloor}(i,{n-i})}$ \\
 & & & \\
\hline
 & & & \\
$\widetilde{D_{2n}}~(n\ge2)$ & \begin{minipage}{0.5\textwidth}\centering \begin{tikzpicture}[scale=1]
	\coordinate (a) at (-1/2,1.732/2);
	\coordinate (b) at (-1/2,-1.732/2);
	\coordinate (c) at (1/2,0);
	\coordinate (d) at (1+1/2,0);
	\coordinate (e) at (2+1/2,0);
	\coordinate (f) at (3+1/2,0);
	\coordinate (g) at (4+1/2,1.732/2);
	\coordinate (h) at (4+1/2,-1.732/2);
	
	\draw (a) node[left]{$1$};
	\draw (b) node[left]{$0$};
	\draw (c) node[below]{$2$};
	\draw (d) node[below]{$3$};
	\draw (e) node{$\cdots$};
	\draw (f) node[right]{${2n-2}$};
	\draw (g) node[right]{${2n}$};
	\draw (h) node[right]{${2n-1}$};
	
	\draw (a)--(c)
	(b)--(c)
	(c)--(d)
	(d)--(2,0)
	(3,0)--(f)
	(f)--(g)
	(f)--(h);
	
	\fill[fill=white] (a) circle (2.5pt);
	\fill[fill=black] (c) circle (2.5pt);
	\fill[fill=black] (d) circle (2.5pt);
	\fill[fill=black] (f) circle (2.5pt);
	\fill[fill=white] (g) circle (2.5pt);
	\fill[fill=white] (b) circle (2.5pt);
	\fill[fill=white] (h) circle (2.5pt);
	\node[mark size=2.5pt] at (b) {\pgfuseplotmark{otimes}};
	\draw (a) circle (2.5pt);
	\draw (g) circle (2.5pt);
	\draw (h) circle (2.5pt);
\end{tikzpicture}\end{minipage} & $\Z/2\Z\oplus\Z/2\Z$ & $\displaystyle{\begin{array}{llll} \omega_{1}=(0,1)({2n-1},{2n}) \\ \\ \omega_{{2n-1}}=(0,{2n-1})(1,{2n})\prod_{i=2}^{n-1}(i,{2n-i}) \\ \\ \omega_{{2n}}=(0,{2n})(1,{2n-1})\prod_{i=2}^{n-1}(i,{2n-i})=\omega_{1}\omega_{{2n-1}}\end{array}}$ \\
 & & & \\
\hline
 & & & \\
$\widetilde{D_{2n+1}}~(n\ge2)$ & \begin{minipage}{0.5\textwidth}\centering \begin{tikzpicture}[scale=1]
	\coordinate (a) at (-1/2,1.732/2);
	\coordinate (b) at (-1/2,-1.732/2);
	\coordinate (c) at (1/2,0);
	\coordinate (d) at (1+1/2,0);
	\coordinate (e) at (2+1/2,0);
	\coordinate (f) at (3+1/2,0);
	\coordinate (g) at (4+1/2,1.732/2);
	\coordinate (h) at (4+1/2,-1.732/2);
	
	\draw (a) node[left]{$1$};
	\draw (b) node[left]{$0$};
	\draw (c) node[below]{$2$};
	\draw (d) node[below]{$3$};
	\draw (e) node{$\cdots$};
	\draw (f) node[right]{${2n-1}$};
	\draw (g) node[right]{${2n+1}$};
	\draw (h) node[right]{${2n}$};
	
	\draw (a)--(c)
	(b)--(c)
	(c)--(d)
	(d)--(2,0)
	(3,0)--(f)
	(f)--(g)
	(f)--(h);
	
	\fill[fill=white] (a) circle (2.5pt);
	\fill[fill=black] (c) circle (2.5pt);
	\fill[fill=black] (d) circle (2.5pt);
	\fill[fill=black] (f) circle (2.5pt);
	\fill[fill=white] (g) circle (2.5pt);
	\fill[fill=white] (b) circle (2.5pt);
	\fill[fill=white] (h) circle (2.5pt);
	\node[mark size=2.5pt] at (b) {\pgfuseplotmark{otimes}};
	\draw (a) circle (2.5pt);
	\draw (g) circle (2.5pt);
	\draw (h) circle (2.5pt);
\end{tikzpicture}\end{minipage} & $\Z/4\Z$ & $\displaystyle{\begin{array}{llll} \omega_{1}=(0,1)({2n},{2n+1}) \\ \\ \omega_{{2n}}=(0,{2n},1,{2n+1})\prod_{i=2}^{n}(i,{2n+1-i}) \\ \\ \omega_{{2n+1}}=(0,{2n+1},1,{2n})\prod_{i=2}^{n}(i,{2n+1-i})\end{array}}$ \\
 & & & \\
\hline
$\widetilde{E_6}$ & \begin{minipage}{0.5\textwidth}\centering \begin{tikzpicture}
	\coordinate (a) at (-2,0);
	\coordinate (b) at (-1,0);
	\coordinate (c) at (0,0);
	\coordinate (d) at (1,0);
	\coordinate (e) at (2,0);
	\coordinate (f) at (0,1);
	\coordinate (g) at (0,2);
	
	\draw (a) node[below]{$1$};
	\draw (b) node[below]{$3$};
	\draw (c) node[below]{$4$};
	\draw (d) node[below]{$5$};
	\draw (e) node[below]{$6$};
	\draw (f) node[right]{$2$};
	\draw (g) node[right]{$0$};
	
	\draw (a)--(b)
	(b)--(c)
	(c)--(d)
	(d)--(e)
	(c)--(f)
	(f)--(g);
	
	\fill[fill=white] (a) circle (2.5pt);
	\fill[fill=white] (e) circle (2.5pt);
	\fill[fill=white] (g) circle (2.5pt);
	\fill[fill=black] (b) circle (2.5pt);
	\fill[fill=black] (c) circle (2.5pt);
	\fill[fill=black] (d) circle (2.5pt);
	\fill[fill=black] (f) circle (2.5pt);
	\draw (a) circle (2.5pt);
	\draw (e) circle (2.5pt);
	\node[mark size=2.5pt] at (g) {\pgfuseplotmark{otimes}};
\end{tikzpicture}\end{minipage} & $\Z/3\Z$ & $\begin{array}{ll} \omega_{1}=(0,1,6)(2,3,5) \\ \\ \omega_{6}=(1,0,6)(3,2,5)=\omega_{1}^{-1}\end{array}$ \\
\hline
$\widetilde{E_7}$ & \begin{minipage}{0.5\textwidth}\centering \begin{tikzpicture}
	\coordinate (a) at (-2,0);
	\coordinate (b) at (-1,0);
	\coordinate (c) at (0,0);
	\coordinate (d) at (1,0);
	\coordinate (e) at (2,0);
	\coordinate (f) at (0,1);
	\coordinate (g) at (3,0);
	\coordinate (h) at (-3,0);
	
	\draw (a) node[below]{$1$};
	\draw (b) node[below]{$3$};
	\draw (c) node[below]{$4$};
	\draw (d) node[below]{$5$};
	\draw (e) node[below]{$6$};
	\draw (f) node[right]{$2$};
	\draw (g) node[below]{$7$};
	\draw (h) node[below]{$0$};
	
	\draw (a)--(b)
	(b)--(c)
	(c)--(d)
	(d)--(e)
	(c)--(f)
	(e)--(g)
	(h)--(a);
	
	\fill[fill=black] (a) circle (2.5pt);
	\fill[fill=black] (e) circle (2.5pt);
	\fill[fill=white] (g) circle (2.5pt);
	\fill[fill=white] (h) circle (2.5pt);
	\fill[fill=black] (b) circle (2.5pt);
	\fill[fill=black] (c) circle (2.5pt);
	\fill[fill=black] (d) circle (2.5pt);
	\fill[fill=black] (f) circle (2.5pt);
	\draw (g) circle (2.5pt);
	\node[mark size=2.5pt] at (h) {\pgfuseplotmark{otimes}};
\end{tikzpicture}\end{minipage} & $\Z/2\Z$ & $\omega_{7}=(0,7)(1,6)(3,5)$ \\
\hline
$\widetilde{E_8}$ & \begin{minipage}{0.5\textwidth}\centering \begin{tikzpicture}
	\coordinate (a) at (-2,0);
	\coordinate (b) at (-1,0);
	\coordinate (c) at (0,0);
	\coordinate (d) at (1,0);
	\coordinate (e) at (2,0);
	\coordinate (f) at (0,1);
	\coordinate (g) at (3,0);
	\coordinate (h) at (5,0);
	\coordinate (i) at (4,0);
	
	\draw (a) node[below]{$1$};
	\draw (b) node[below]{$3$};
	\draw (c) node[below]{$4$};
	\draw (d) node[below]{$5$};
	\draw (e) node[below]{$6$};
	\draw (f) node[right]{$2$};
	\draw (g) node[below]{$7$};
	\draw (h) node[below]{$0$};
	\draw (i) node[below]{$8$};
	
	\draw (a)--(b)
	(b)--(c)
	(c)--(d)
	(d)--(e)
	(c)--(f)
	(e)--(g)
	(h)--(i)
	(g)--(i);
	
	\fill[fill=black] (a) circle (2.5pt);
	\fill[fill=black] (e) circle (2.5pt);
	\fill[fill=black] (g) circle (2.5pt);
	\fill[fill=white] (h) circle (2.5pt);
	\fill[fill=black] (b) circle (2.5pt);
	\fill[fill=black] (c) circle (2.5pt);
	\fill[fill=black] (d) circle (2.5pt);
	\fill[fill=black] (f) circle (2.5pt);
	\fill[fill=black] (i) circle (2.5pt);
	\node[mark size=2.5pt] at (h) {\pgfuseplotmark{otimes}};
\end{tikzpicture}\end{minipage} & $1$ & $\varnothing$ \\
\hline
 & & & \\
\centering{$\widetilde{F_4}$} & \begin{minipage}{0.5\textwidth}\centering \begin{tikzpicture}
	\coordinate (a) at (-2,0);
	\coordinate (b) at (-1,0);
	\coordinate (c) at (0,0);
	\coordinate (d) at (1,0);
	\coordinate (e) at (2,0);
	
	\draw (a) node[below]{$0$};
	\draw (b) node[below]{$1$};
	\draw (c) node[below]{$2$};
	\draw (d) node[below]{$3$};
	\draw (e) node[below]{$4$};
	
	\draw (a)--(b)
	(b)--(c)
	(d)--(e);	
	\draw[decoration={markings, mark=at position 0.75 with {\arrow{angle 60}}},postaction={decorate},double distance=2.5pt] (c)--(d);
	
	\fill[fill=white] (a) circle (2.5pt);
	\node[mark size=2.5pt] at (a) {\pgfuseplotmark{otimes}};
	\fill[fill=black] (b) circle (2.5pt);
	\fill[fill=black] (c) circle (2.5pt);
	\fill[fill=black] (d) circle (2.5pt);
	\fill[fill=black] (e) circle (2.5pt);
\end{tikzpicture}\end{minipage} & $1$ & $\varnothing$ \\
\hline
 & & & \\
$\widetilde{G_2}$ & \begin{minipage}{0.5\textwidth}\centering \begin{tikzpicture}
	\coordinate (a) at (-1,0);
	\coordinate (b) at (0,0);
	\coordinate (c) at (1,0);
	
	\draw (a) node[below]{$1$};
	\draw (b) node[below]{$2$};
	\draw (c) node[below]{$0$};
	
	\draw (b)--(c);
	\draw[decoration={markings, mark=at position 0.75 with {\arrow{angle 60}}},postaction={decorate},double distance=2.5pt] (b)--(a);
	\draw (b)--(a);
	
	\fill[fill=white] (c) circle (2.5pt);
	\node[mark size=2.5pt] at (c) {\pgfuseplotmark{otimes}};
	\fill[fill=black] (a) circle (2.5pt);
	\fill[fill=black] (b) circle (2.5pt);
\end{tikzpicture}\end{minipage} & $1$ & $\varnothing$ \\
\hline
\end{tabular}}
\captionof{table}{Extended Dynkin diagrams and fundamental groups elements, represented as permutations of the nodes.}
\label{extendeddynkindiagrams}
\end{center}

To avoid too many choices, we fix a total ordering $\prec$ on $F(\mathrm{Sd}(\overline{\mathcal{A}_0}))$. For instance, the lexicographical order $<_{lex}$ induced by the order on $\mathscr{P}(S_0)=2^{S_0}$ inherited from the natural order on $S_0$. As the barycentric subdivision of $\overline{\mathcal{A}_0}$ is simplicial, the boundaries of the complex and the cup product are easily determined and lead to the following result:

\begin{theo}\label{complexforY}
For $0\le d\le n$, decompose the $\Omega_Y$-set $F_d(\mathrm{Sd}(\overline{\mathcal{A}_0}))$ into orbits
\[F_d(\mathrm{Sd}(\overline{\mathcal{A}_0}))/\Omega_Y\approx\{Z_{d,1}\prec\cdots\prec Z_{d,k_d}\},~~\text{where}~~Z_{d,i}=\min_{\prec}(\Omega_Y\cdot Z_{d,i}).\]
Denote further, for $0\le p \le d$ and $1\le i \le k_d$,
\[Z_{d,i}^{(p)}:=((Z_{d,i})_0,\dotsc,\widehat{(Z_{d,i})_p},\dotsc,(Z_{d,i})_d).\]
Then the complex $C^\mathrm{cell}_*(V^*,W_Y;\Z)$ is given by
\[C^\mathrm{cell}_d(V^*,W_Y;\Z)=\bigoplus_{i=1}^{k_d}\Z\left[W_Y/(W_Y)_{Z_{d,i}}\right],~~\text{with}~~(W_Y)_{Z_{d,i}}=(W_\mathrm{a})_{(Z_{d,i})_d^\complement}\rtimes \bigcap_{j=0}^d(\Omega_Y)_{(Z_{d,i})_j}.\]
The boundaries are given by
\[\partial_d(Z_{d,i})=\sum_{p=0}^d(-1)^p\omega_{p,i}(Z_{d-1,u_i}),~\text{where }u_i\in S_0~;~Z_{d-1,u_i}=\min_\prec(\Omega_Y\cdot Z_{d,i}^{(p)})~\text{and}~\omega_{p,i}(Z_{d-1,u_i})=Z_{d,i}^{(p)}.\]

Moreover, the dual complex $C^*_\mathrm{cell}(V^*,\widehat{W_\mathrm{a}};\Z)$ is a $W_Y$-dg-ring with product
\[Z_{d,i}^*\cup Z_{e,j}^*=\delta_{(Z_{d,i})_d,(Z_{e,j})_0}\omega(Z_{d+e,k})^*,\]
where
\[Z_{d+e,k}=\min_\prec(\Omega_Y\cdot((Z_{d,i})_0,\dotsc,(Z_{d,i})_d,(Z_{e,j})_0,\dotsc,(Z_{e,j})_e))~~\text{and}~~\omega(Z_{d+e,k})=((Z_{d,i})_0,\dotsc,(Z_{e,j})_e).\]

Finally, the complex for the torus $V^*/Y$ is given by
\[C^\mathrm{cell}_*(V^*/Y,W;\Z)=\mathrm{Def}_W^{W_Y}(C_*^\mathrm{cell}(V^*,W_Y;\Z)).\]
\end{theo}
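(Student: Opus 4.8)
The plan is to extract all four assertions directly from the $W_Y$-equivariant triangulation $W_\mathrm{a}\cdot\mathrm{Sd}(\overline{\mathcal{A}_0})$ of $V^*$ just constructed, leaning on the decomposition $W_Y\simeq W_\mathrm{a}\rtimes\Omega_Y$ of Proposition \ref{identlatticessubgroupsofOmega} together with the stabilizer computation of Lemma \ref{decofstabs}.

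\emph{Chain groups.} Since $\overline{\mathcal{A}_0}$ is a strict fundamental domain for $W_\mathrm{a}$ acting on $V^*$ and $\mathrm{Sd}(\overline{\mathcal{A}_0})$ triangulates it $\Omega_Y$-equivariantly by Lemma \ref{barysubdivisequiv}, every $d$-cell of $V^*$ is $W_\mathrm{a}$-equivalent to a unique $d$-face of $\mathrm{Sd}(\overline{\mathcal{A}_0})$. Two such faces are $W_Y$-equivalent if and only if they are $\Omega_Y$-equivalent, because the $W_\mathrm{a}$-component of any element of $W_Y$ carrying one into the other must fix $\overline{\mathcal{A}_0}$. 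Hence $\mathcal{E}_d(V^*)/W_Y\approx F_d(\mathrm{Sd}(\overline{\mathcal{A}_0}))/\Omega_Y$; choosing the $\prec$-minimal representatives $Z_{d,i}$ and feeding them into the general identification $C^\mathrm{cell}_d(X,G;\Z)\simeq\bigoplus_{[e]\in\mathcal{E}_d(X)/G}\Z[G/G_e]$ recalled above gives the stated direct-sum decomposition, with the stabilizers supplied verbatim by Lemma \ref{decofstabs}.

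\emph{Boundaries.} As $\mathrm{Sd}(\overline{\mathcal{A}_0})$ is a genuine simplicial complex, whose vertices are the non-empty faces of $\overline{\mathcal{A}_0}$ totally ordered inside each chain by inclusion, the cellular boundary is the usual alternating sum of vertex-deletions $\partial_d(Z_\bullet)=\sum_{p}(-1)^p Z_\bullet^{(p)}$. Each face $Z_{d,i}^{(p)}$ is $\Omega_Y$-equivalent to a unique representative $Z_{d-1,u_i}$, and recording the element $\omega_{p,i}\in\Omega_Y$ with $\omega_{p,i}(Z_{d-1,u_i})=Z_{d,i}^{(p)}$ yields the displayed formula.

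\emph{Cup product.} This is the delicate point, and I expect it to be the main obstacle. The ordering of the vertices of $\mathrm{Sd}(\overline{\mathcal{A}_0})$ by dimension is $\Omega_Y$-invariant, since $\Omega_Y$ acts by simplicial automorphisms preserving $\dim$; this is precisely what permits the simplicial cup-product formula $[u_0,\dots,u_k]^*\cup[v_0,\dots,v_l]^*=\delta_{u_k,v_0}[u_0,\dots,u_k,v_1,\dots,v_l]^*$ to be applied equivariantly on the barycentric subdivision. Transporting the concatenated chain back to its $\prec$-minimal representative via a suitable $\omega\in\Omega_Y$ produces the stated product, after which one must verify that it is independent of the chosen representatives, associative, and satisfies the graded Leibniz rule with the codifferential, so that the dual complex becomes a $\Z[W_Y]$-dg-algebra. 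The heart of the matter is this descent-and-compatibility check: one has to ensure that the $\Omega_Y$-twists appearing in both the differential and the product interact coherently, which is exactly where the dimension-respecting order and the simple transitivity of $W_\mathrm{a}$ on alcoves (used via the Claim in the proof of Theorem \ref{cupprodparabolics}) are essential.

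\emph{Torus.} Finally, apply Lemma \ref{GCWandSDP} to the semidirect product $W_Y=Y\rtimes W$, with normal subgroup $N=Y$ and complement $H=W$; since $V^*/Y\cong T$ is Hausdorff, the lemma gives $C^\mathrm{cell}_*(V^*/Y,W;\Z)=\mathrm{Def}_W^{W_Y}(C^\mathrm{cell}_*(V^*,W_Y;\Z))$, as claimed.
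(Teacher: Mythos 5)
Your proposal is correct and follows essentially the same route as the paper, which in fact offers no separate proof of Theorem \ref{complexforY} beyond the remark that the barycentric subdivision is simplicial: the intended argument is exactly your assembly of Lemma \ref{barysubdivisequiv} for the $\Omega_Y$-equivariance, the strict fundamental domain property to identify $\mathcal{E}_d(V^*)/W_Y$ with $F_d(\mathrm{Sd}(\overline{\mathcal{A}_0}))/\Omega_Y$, Lemma \ref{decofstabs} for the stabilizers, the standard ordered-simplicial boundary and cup-product formulas with $\prec$-minimal-representative bookkeeping, and Lemma \ref{GCWandSDP} for the deflation to $V^*/Y$. The only slip is a wording one: the $W_\mathrm{a}$-component of an element of $W_Y$ carrying one face of $\mathrm{Sd}(\overline{\mathcal{A}_0})$ to another need not fix $\overline{\mathcal{A}_0}$ (which would force it to be trivial by simple transitivity on alcoves); rather, by the fundamental-domain property it fixes the translated face pointwise, which already yields the desired $\Omega_Y$-equivalence of the two faces.
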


\begin{exemple}\label{A2example}
Continuing the Example \ref{ex_with_A2}, we treat the extended type $A_2$, which is fairly computable by hand. We have $S_0=\{0,1,2\}=J$ and 
\[\Omega=\Omega_{P^\vee}=\{1,\underbrace{t_{\varpi_\alpha^\vee}s_\alpha s_\beta}_{\omega_\alpha},\underbrace{t_{\varpi_\beta^\vee}s_\beta s_\alpha}_{\omega_\beta}\}\simeq \Z/3\Z.\]
In this case, $W_{P^\vee}=\widehat{W_\mathrm{a}}$ is the classical extended affine Weyl group. Geometrically, the element $\omega_\alpha$ acts as the rotation with angle $2\pi/3$ around the barycenter of $\overline{\mathcal{A}_0}=\mathrm{conv}(0,\varpi_\alpha^\vee,\varpi_\beta^\vee)=:[0,1,2]\simeq\Delta^2$. The situation can be visualized in Figure \ref{alcoveA2}.

\begin{figure}[h!]
\centering
\begin{tikzpicture}[scale=2.2,rotate=45]  
  \coordinate (z) at (0,0);
  \coordinate (a) at (1,-1);
  \coordinate (b) at (0.3660254040,1.366025404);
  \coordinate (ab) at (1+0.3660254040,-1+1.366025404);
  \coordinate (ma) at (-1,1);
  \coordinate (mb) at (-0.3660254040,-1.366025404);
  \coordinate (mab) at (-1-0.3660254040,1-1.366025404);
  
  \coordinate (la) at (2/3+1/3*0.3660254040,-2/3+1/3*1.366025404);
  \coordinate (lb) at (1/3+2/3*0.3660254040,-1/3+2/3*1.366025404);
  
  \coordinate (lad) at (1/3+1/6*0.3660254040,-1/3+1/6*1.3660254040);
  \coordinate (lbd) at (1/6+1/3*0.3660254040,-1/6+1/3*1.3660254040);
  \coordinate (barlab) at (1/3+1/3*0.3660254040,-1/3+1/3*1.3660254040);
  
  \fill[fill=red] (la) circle (1pt);
  \fill[fill=red] (lb) circle (1pt);
  \fill[fill=red] (z) circle (1pt);
  
  \draw (la) node[right]{\scriptsize{$\varpi_\alpha^\vee$}};
  \draw (lb) node[left]{\scriptsize{$\varpi_\beta^\vee$}};
  \draw (0,0.05) node[left]{\scriptsize{$0$}};
  
  \fill[fill=blue,opacity=0.4] (z)--(la)--(lb);
  
  \draw (z)--(la);
  \draw (z)--(lb);
  \draw (la)--(lb);
  \draw (la)--(lbd);
  \draw (lb)--(lad);
  \draw (z)--($(z)!0.5!(ab)$);
  
  \coordinate (x) at (2+0.3660254040,-2+1.3660254040);
  \coordinate (y) at (1+2*0.3660254040,-1+2*1.3660254040);
  
  \draw ($(z)!0.82!(barlab)$) node[left]{\scriptsize{$e^2_2$}};
  \draw ($(z)!0.57!(barlab)$) node[right]{\scriptsize{$e^2_1$}};

\end{tikzpicture}
\caption{Barycentric subdivision $|\mathrm{Sd}(\overline{\mathcal{A}_0})|$ of the fundamental alcove $\overline{\mathcal{A}_0}$.}
\label{alcoveA2}
\end{figure}
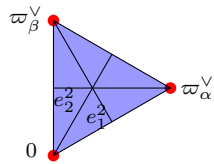

Therefore, the complex $C^\mathrm{cell}_*(V^*,\widehat{W_\mathrm{a}};\Z)$ is given by
\[\xymatrix{\Z[\widehat{W_\mathrm{a}}]^2 \ar^<<<<<{\partial_2}[r] & \Z[\widehat{W_\mathrm{a}}/\left<s_\beta\right>]\oplus\Z[\widehat{W_\mathrm{a}}/\left<s_\alpha\right>]\oplus\Z[\widehat{W_\mathrm{a}}]^2 \ar^{\partial_1}[r] & \Z[\widehat{W_\mathrm{a}}/W]\oplus\Z[\widehat{W_\mathrm{a}}/\left<s_\beta\right>]\oplus\Z[\widehat{W_\mathrm{a}}/\Omega]},\]
with
\[\partial_2=\left(\begin{smallmatrix}1 & 0 & -1 & 1 \\ 0 & -1 & 1 & -\omega_\beta\end{smallmatrix}\right),~~\partial_1=\left(\begin{smallmatrix}-1 & 1 & 0 \\ -1 & \omega_\beta & 0 \\ -1 & 0 & 1 \\ 0 & -1 & 1\end{smallmatrix}\right).\]
Moreover, the root datum $(P,\Phi,P^\vee,\Phi^\vee)$ may be realized by the Lie group $PSU(3)=SU(3)/\mu_3$ with torus $T=T_0/\mu_3\simeq V^*/P^\vee$, where $T_0=S(U(1)^3)$ is the standard torus of diagonal matrices of $SU(3)$. The complex $C^\mathrm{cell}_*(T,W;\Z)=\mathrm{Def}^{\widehat{W_\mathrm{a}}}_W(C^\mathrm{cell}_*(V^*,\widehat{W_\mathrm{a}};\Z))$ then becomes
\[\xymatrix{\Z[W]^2 \ar^<<<<<{\overline{\partial_2}}[r] & \Z[W/\left<s_\beta\right>]\oplus\Z[W/\left<s_\alpha\right>]\oplus\Z[W]^2 \ar^<<<<<{\overline{\partial_1}}[r] & \Z\oplus\Z[W/\left<s_\beta\right>]\oplus\Z[W/\left<s_\alpha s_\beta\right>]},\] 
with
\[\overline{\partial_2}=\left(\begin{smallmatrix}1 & 0 & -1 & 1 \\ 0 & -1 & 1 & -s_\beta s_\alpha \end{smallmatrix}\right),~~\overline{\partial_1}=\left(\begin{smallmatrix}-1 & 1 & 0 \\ -1 & s_\beta s_\alpha & 0 \\ -1 & 0 & 1 \\ 0 & -1 & 1\end{smallmatrix}\right).\]

The complexes $C^\mathrm{cell}_*(T_0,W;\Z)$ and $C^\mathrm{cell}_*(T,W;\Z)$ may be obtained using the commands 
\newline
\emph{\texttt{ComplexForFiniteCoxeterGroup("A",2)}}and \emph{\texttt{CellularComplexT("A",2,[0,1,2])}} provided by the package \emph{\texttt{Salvetti-and-tori-complexes}}\footnote{\url{https://github.com/arthur-garnier/Salvetti-and-tori-complexes}}.
\end{exemple}

\part{Hyperbolic tori for non-crystallographic Coxeter groups}

The goal of this part is to construct a smooth manifold affording a dg-algebra with a similar combinatorics as the one in Theorem \ref{cupprodparabolics} and playing the role of a torus for non-crystallographic Coxeter groups. First, we will define \emph{compact hyperbolic extensions} of non-crystallographic finite Coxeter groups and the desired manifold will then be constructed as an orbit space of the \emph{Coxeter complex} of the hyperbolic extension.

\section{Construction of the hyperbolic extensions and the hyperbolic torus}

Throughout the following three subsections we fix, once and for all, a finite irreducible Coxeter system $(W,S)$ of rank $n>1$.

\subsection{Affine and compact hyperbolic one-node extensions of Coxeter groups}
\hfill

Let us first recall some basic terminology concerning Coxeter groups; see \cite{bourbaki456} and \cite{humphreys-reflectiongroups}. We write
\[W=\left<s_1,\dotsc,s_n~|~(s_is_j)^{m_{i,j}}=1\right>,\]
with $M=(m_{i,j})_{1\le i,j \le n}$ the Coxeter matrix of $(W,S)$. Recall that we may define a symmetric bilinear form $B$ on the formal vector space $V:=\mathrm{span}_\R(\alpha_i,~1\le i \le n)$ by
\[B(\alpha_i,\alpha_j):=-\cos\left({\pi}/{m_{i,j}}\right)\]
as well as linear mappings
\[\forall 1 \le i \le n,~\sigma_i:=(v\mapsto v-2B(\alpha_i,v)\alpha_i).\]
Then the assignment $s_i \mapsto \sigma_i$ extends uniquely to a faithful irreducible representation
\[\sigma : W \longrightarrow O(V,B)\le GL(V),\]
known as the \emph{geometric representation} of $W$. Moreover, $W$ is finite (resp. affine) if and only if the form $B$ is positive definite (resp. positive semidefinite).

\begin{prop-def}[{\cite[\S 6.8]{humphreys-reflectiongroups}}]\label{hyperbolics}
The followings are equivalent
\begin{enumerate}[label=(\roman*)]
\item The form $B$ has signature $(n-1,1)$ and $B(\lambda,\lambda)<0$ for $\lambda\in \widehat{V}^*$ in the \emph{fundamental chamber} $C$ (i.e. such that $\left<\lambda,\alpha_s\right>>0$ for each $s\in\widehat{S}$),
\item The form $B$ is non-degenerate but not positive and the graph obtained by removing any vertex from the graph of $W$ is of non-negative type (i.e. is finite or affine).
\end{enumerate}

If these conditions occur, then $W$ is said to be \emph{hyperbolic}. If the second condition is enhanced by requiring that any such sub-graph is of positive definite type, then $W$ is said to be \emph{compact hyperbolic}.
\end{prop-def}

\begin{prop-def}\label{cpcthypextsofnoncryst}
If $W$ is a Weyl group, we denote by $r_W$ the reflection associated to the highest root of the root system of $W$. In the other cases we choose $r_W\in W$ to be the following reflection in $W$:
\[r_W:=\left\{\begin{array}{ccc}
(s_1s_2)^{\left\lfloor\frac{m-1}{2}\right\rfloor}s_1 & \text{if} & W=I_2(m),~m\ge3, \\[.5em]
s_3^{(s_2s_1)^2} & \text{if} & W=H_3, \\[.5em]
s_4^{\left(s_1^{s_2s_3}(s_1s_2)^2s_3s_4\right)^2} & \text{if} & W=H_4. \end{array}\right.\]

Define
\[\widehat{W}:=\left<\widehat{s}_0,\widehat{s}_1,\dotsc,\widehat{s}_r~\left|~\forall i,j\ge1,~(\widehat{s}_i\widehat{s}_j)^{m_{i,j}}=(\widehat{s}_0\widehat{s}_i)^{o(r_Ws_i)}={\widehat{s}_0}^2=1\right.\right>,\]
where $o(x)$ is the order of the element $x$ and $\widehat{S}:=\{\widehat{s}_0,\dotsc,\widehat{s}_n\}$. If $W$ is crystallographic, then $(\widehat{W},\widehat{S})$ is the usual Coxeter system corresponding to the affine Weyl group $W_{\rm a}$.

In the other cases, the pair $(\widehat{W},\widehat{S})$ is a compact hyperbolic Coxeter system, whose Coxeter graph is as in the following table:

\begin{table}[h!]
\resizebox{1.0\textwidth}{!}{
\begin{tabular}{|c||c|c|c|c|c|}
\hline
\emph{Extension} & $\widehat{I_2(m)}~(m\equiv 1[2])$ & $\widehat{I_2(m)}~(m\equiv 0[4])$ & $\widehat{I_2(m)}~(m\equiv 2[4])$ & $\widehat{H_3}$ & $\widehat{H_4}$ \\
\hline
\hline
\emph{Coxeter graph} & \begin{minipage}{0.18\textwidth}\centering \begin{tikzpicture}
	\coordinate (a) at (-1,0);
	\coordinate (b) at (0,0);
	\coordinate (c) at (-1/2,1);
	
	\draw (a) node[below]{$1$};
	\draw (b) node[below]{$2$};
	\draw (c) node[above]{$0$};
	\draw (-1/2,0) node[below]{$m$};
	\draw (-1/4,1/2) node[right]{$m$};
	\draw (-3/4,1/2) node[left]{$m$};
	
	\draw (a)--(b);
	\draw (b)--(c);
	\draw (a)--(c);
	
	\fill[fill=white] (c) circle (2.5pt);
	\node[mark size=2.5pt] at (c) {\pgfuseplotmark{otimes}};
	\fill[fill=black] (a) circle (2.5pt);
	\fill[fill=black] (b) circle (2.5pt);
\end{tikzpicture}\end{minipage} & \begin{minipage}{0.18\textwidth}\centering \begin{tikzpicture}
	\coordinate (a) at (-1,0);
	\coordinate (b) at (0,0);
	\coordinate (c) at (-2,0);
	
	\draw (a) node[below]{$1$};
	\draw (b) node[below]{$2$};
	\draw (c) node[below]{$0$};
	\draw (-3/2,0) node[above]{$m$};
	\draw (-1/2,0) node[above]{$m$};
	
	\draw (a)--(b);
	\draw (a)--(c);
	
	\fill[fill=white] (c) circle (2.5pt);
	\node[mark size=2.5pt] at (c) {\pgfuseplotmark{otimes}};
	\fill[fill=black] (a) circle (2.5pt);
	\fill[fill=black] (b) circle (2.5pt);
\end{tikzpicture}\end{minipage} & \begin{minipage}{0.18\textwidth}\centering \begin{tikzpicture}
	\coordinate (a) at (-1,0);
	\coordinate (b) at (0,0);
	\coordinate (c) at (-2,0);
	
	\draw (a) node[below]{$1$};
	\draw (b) node[below]{$2$};
	\draw (c) node[below]{$0$};
	\draw (-3/2,0) node[above]{${m}/{2}$};
	\draw (-1/2,0) node[above]{$m$};
	
	\draw (a)--(b);
	\draw (a)--(c);
	
	\fill[fill=white] (c) circle (2.5pt);
	\node[mark size=2.5pt] at (c) {\pgfuseplotmark{otimes}};
	\fill[fill=black] (a) circle (2.5pt);
	\fill[fill=black] (b) circle (2.5pt);
\end{tikzpicture}\end{minipage} & \begin{minipage}{0.13\textwidth}\centering \begin{tikzpicture}
	\coordinate (a) at (-1,0);
	\coordinate (b) at (0,0);
	\coordinate (c) at (0,1);
	\coordinate (d) at (-1,1);
	
	\draw (a) node[left]{$1$};
	\draw (b) node[right]{$2$};
	\draw (c) node[right]{$3$};
	\draw (d) node[left]{$0$};
	\draw (-1/2,0) node[below]{$5$};
	\draw (-1/2,1) node[above]{$5$};
	
	\draw (a)--(b);
	\draw (b)--(c);
	\draw (c)--(d);
	\draw (d)--(a);
	
	\fill[fill=white] (d) circle (2.5pt);
	\node[mark size=2.5pt] at (d) {\pgfuseplotmark{otimes}};
	\fill[fill=black] (c) circle (2.5pt);
	\fill[fill=black] (a) circle (2.5pt);
	\fill[fill=black] (b) circle (2.5pt);
\end{tikzpicture}\end{minipage} & \begin{minipage}{0.3\textwidth}\centering \begin{tikzpicture}
	\coordinate (a) at (-1,0);
	\coordinate (b) at (0,0);
	\coordinate (c) at (1,0);
	\coordinate (d) at (2,0);
	\coordinate (e) at (3,0);
	
	\draw (a) node[below]{$1$};
	\draw (b) node[below]{$2$};
	\draw (c) node[below]{$3$};
	\draw (d) node[below]{$4$};
	\draw (e) node[below]{$0$};
	\draw (-1/2,0) node[above]{$5$};
	\draw (5/2,0) node[above]{$5$};
	
	\draw (a)--(b);
	\draw (b)--(c);
	\draw (c)--(d);
	\draw (d)--(e);
	
	\fill[fill=white] (e) circle (2.5pt);
	\node[mark size=2.5pt] at (e) {\pgfuseplotmark{otimes}};
	\fill[fill=black] (d) circle (2.5pt);
	\fill[fill=black] (c) circle (2.5pt);
	\fill[fill=black] (a) circle (2.5pt);
	\fill[fill=black] (b) circle (2.5pt);
\end{tikzpicture}\end{minipage} \\
\hline
\end{tabular}}
\captionof{table}{Compact hyperbolic extensions of $I_2(m)$, $H_3$ and $H_4$.}
\label{hypextdiags}
\end{table}

In types $H_3$ and $H_4$, the reflection $r_W$ is the only one for which the group $\widehat{W}$ is compact hyperbolic.
\end{prop-def}

\begin{proof}
The first statement on the case where $W$ is a Weyl group is standard, see for instant \cite[\S 4.6]{humphreys-reflectiongroups}. Observe that for a crystallographic dihedral group $W=I_2(m)=\left<s,t\right>$ (i.e. when $m\in\{3,4,6\}$), the above expression $r_W=(st)^{\left\lfloor\frac{m-1}{2}\right\rfloor}s=s(ts)^{\left\lfloor\frac{m-1}{2}\right\rfloor}$ is a reduced expression of the reflection associated to the highest root, so that we obtain a single expression for all irreducible dihedral groups at once.

In the non-crystallographic case, the fact that $(\widehat{W},\widehat{S})$ is compact hyperbolic is straightforward, as the compact hyperbolic Coxeter graphs are well-known, see \cite[\S 6.9]{humphreys-reflectiongroups} or the original work \cite[Appendices]{chein}. The statement concerning the unicity comes from a tedious, but elementary verification on the 15 (resp. 60) reflections of $H_3$ (resp. $H_4$): only the reflection $r_W$ from the statement gives a graph which appears in the table of \cite{humphreys-reflectiongroups}.
\end{proof}

The very definitions of $W$ and $\widehat{W}$ as finitely presented groups lead to the following result:

\begin{cor}\label{projfromWhattoW}
The assignment
\[\widehat{s}_0 \mapsto r_W,~\widehat{s}_i \mapsto s_i~(i\ge1)\]
extends (uniquely) to a surjective reflection-preserving group homomorphism
\[\begin{array}{ccc}\widehat{W}\stackrel{\tiny{\pi}}\longtwoheadrightarrow W.\end{array}\]

Moreover, if $r_W=s_{i_1}\cdots s_{i_k}$ is a reduced expression of $r_W$, then the element $\widehat{r_W}=\widehat{s}_{i_1}\cdots\widehat{s}_{i_k}\in\widehat{W}$ is well-defined and we have
\[\ker\pi=\left<(\widehat{s}_0\widehat{r_W})^{\widehat{W}}\right>,\]
that is, $\ker(\pi)$ is the normal closure of $\widehat{s}_0\widehat{r_W}$ in $\widehat{W}$.
\end{cor}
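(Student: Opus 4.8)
The plan is to read everything off the two finite presentations and then compute the kernel group-theoretically. First I would produce $\pi$ via the universal property of a Coxeter presentation (von Dyck's theorem): to extend $\widehat{s}_0\mapsto r_W$, $\widehat{s}_i\mapsto s_i$ to a homomorphism, it suffices to check that the images satisfy the defining relations of $\widehat{W}$. The relations $(\widehat{s}_i\widehat{s}_j)^{m_{i,j}}=1$ for $i,j\ge 1$ map to the braid relations $(s_is_j)^{m_{i,j}}=1$ valid in $W$; the relation $\widehat{s}_0^2=1$ maps to $r_W^2=1$, which holds since $r_W$ is a reflection (Proposition-Definition \ref{cpcthypextsofnoncryst}); and $(\widehat{s}_0\widehat{s}_i)^{o(r_Ws_i)}=1$ maps to $(r_Ws_i)^{o(r_Ws_i)}=1$, true by the very definition of the order. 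Surjectivity is immediate, since $s_1,\dots,s_n$ generate $W$ and lie in the image. The map $\pi$ is reflection-preserving because every reflection of $\widehat{W}$ is a conjugate $\widehat{w}\widehat{s}_i\widehat{w}^{-1}$ of a generator, whose image $\pi(\widehat{w})\,\pi(\widehat{s}_i)\,\pi(\widehat{w})^{-1}$ is a conjugate of a reflection of $W$ (namely $s_i$ or $r_W$), hence a reflection.

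For the well-definedness of $\widehat{r_W}$, the key observation is that the standard parabolic subgroup $\langle\widehat{s}_1,\dots,\widehat{s}_n\rangle\le\widehat{W}$ is itself a Coxeter group whose Coxeter matrix is exactly $(m_{i,j})_{1\le i,j\le n}$, the matrix of $W$; thus $\widehat{s}_i\mapsto s_i$ restricts to an isomorphism $\langle\widehat{s}_1,\dots,\widehat{s}_n\rangle\stackrel{\sim}{\to} W$. Equivalently, two reduced words for $r_W$ in the $s_i$ differ by braid moves (Matsumoto--Tits), and since $\widehat{m}_{i,j}=m_{i,j}$ for $i,j\ge 1$ these moves lift verbatim to $\widehat{W}$; hence $\widehat{s}_{i_1}\cdots\widehat{s}_{i_k}$ is independent of the chosen reduced expression and defines $\widehat{r_W}$, with $\pi(\widehat{r_W})=r_W$.

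Finally I would identify the kernel. Writing $N:=\langle(\widehat{s}_0\widehat{r_W})^{\widehat{W}}\rangle$, one inclusion is immediate: $\pi(\widehat{s}_0\widehat{r_W})=r_W\cdot r_W=1$, so $\widehat{s}_0\widehat{r_W}\in\ker\pi$ and, $\ker\pi$ being normal, $N\subseteq\ker\pi$. For the reverse inclusion I would show that the induced surjection $\overline{\pi}:\widehat{W}/N\to W$ is an isomorphism. In $\widehat{W}/N$ the relation $\widehat{s}_0\widehat{r_W}=1$ forces $\widehat{s}_0$ into the subgroup generated by the images of $\widehat{s}_1,\dots,\widehat{s}_n$, so $\widehat{W}/N$ is generated by those images; since they still satisfy $(\widehat{s}_i\widehat{s}_j)^{m_{i,j}}=1$, the assignment $s_i\mapsto\overline{\widehat{s}_i}$ defines a surjective homomorphism $\phi:W\to\widehat{W}/N$. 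The composite $\overline{\pi}\circ\phi$ fixes each $s_i$, hence equals $\mathrm{id}_W$; therefore $\phi$ is also injective, hence an isomorphism, forcing $\overline{\pi}=\phi^{-1}$ to be an isomorphism and $\ker\pi=N$.

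The routine verifications (the three families of relations, surjectivity, reflection-preservation) are mechanical. The one genuinely structural point, and the step I expect to need the most care, is the kernel computation: constructing the splitting $\phi$ and observing that modulo $N$ the extra generator $\widehat{s}_0$ collapses onto $\widehat{r_W}$, leaving a group generated by the $\widehat{s}_i$ ($i\ge1$) subject to precisely the Coxeter relations of $W$, so that $\widehat{W}/N$ and $W$ have the same size and $\overline{\pi}$ becomes an isomorphism.
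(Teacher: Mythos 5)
Your proposal is correct and follows essentially the same route as the paper: both arguments hinge on the observation that modulo $N=\left<(\widehat{s}_0\widehat{r_W})^{\widehat{W}}\right>$ the generator $\widehat{s}_0$ collapses onto $\widehat{r_W}$, so that $\widehat{W}/N$ carries exactly the Coxeter presentation of $W$, and the first three claims (von Dyck, reflection-preservation, independence of the reduced expression via the parabolic $\widehat{W}_{\{1,\dotsc,n\}}\simeq W$) are handled identically. Your conclusion is in fact slightly more direct: the explicit splitting $\phi$ with $\overline{\pi}\circ\phi=\mathrm{id}_W$ yields $\ker\pi=N$ immediately, whereas the paper passes through an isomorphism of $\widehat{W}$-sets $\widehat{W}/N\simeq\widehat{W}/\ker\pi$ and then argues that the two normal subgroups are conjugate, hence equal.
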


\begin{proof}
For every expression of $r_W$ as in the statement, we have $\widehat{r_W}\in\widehat{W}_{\{1,\dotsc,n\}}\simeq W$ so that $\widehat{r_W}$ doesn't depend on the chosen reduced expression. The kernel of $\pi$ certainly contains the subgroup $N:=\left<(\widehat{s}_0\widehat{r_W})^{\widehat{W}}\right>$ and we easily find a presentation of $\widehat{W}/N$ by adding the relation $\widehat{s}_0=\widehat{s}_{i_1}\cdots \widehat{s}_{i_k}$ to the already known relations for $\widehat{W}$. The composite
\[\left<\overline{s}_0,\overline{s}_1,\dotsc,\overline{s}_n~|~\forall i,j\ge1,~(\overline{s}_i\overline{s}_j)^{m_{i,j}}=1,~\overline{s}_0=\overline{s}_{i_1}\cdots\overline{s}_{i_k}\right>\simeq\widehat{W}/N \longtwoheadrightarrow \widehat{W}/\ker\pi=W\]
maps $\overline{s}_i$ to $s_i$ and is an isomorphism. In particular, this yields an isomorphism of $\widehat{W}$-sets
\[\widehat{W}/N \simeq \widehat{W}/{\ker\pi},\]
forcing $\ker(\pi)$ and $N$ to be conjugate in $\widehat{W}$, hence equal.
\end{proof}

\begin{definition}\label{defofQ}
We denote the kernel of the projection from the previous Corollary by
\[Q:=\ker\pi=\left<(\widehat{s}_0\widehat{r_W})^{\widehat{W}}\right>.\]
\end{definition}

\begin{cor}
With the notation above, we have
\[\widehat{W}=Q\rtimes W.\]
\end{cor}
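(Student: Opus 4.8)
The plan is to recognize the content of this corollary as a splitting statement for the short exact sequence
\[1 \longrightarrow Q \longrightarrow \widehat{W} \stackrel{\pi}\longrightarrow W \longrightarrow 1\]
furnished by Corollary \ref{projfromWhattoW} and Definition \ref{defofQ}, where $Q=\ker\pi\unlhd\widehat{W}$. Since $Q$ is already known to be normal, it suffices to produce a group-theoretic section $\iota : W\to\widehat{W}$ of $\pi$, that is, a homomorphism with $\pi\circ\iota=\mathrm{id}_W$; the decomposition $\widehat{W}=Q\rtimes W$ then follows by the usual splitting lemma.

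First I would construct $\iota$ by embedding $W$ as the standard parabolic subgroup $\widehat{W}_{\{1,\dots,n\}}=\langle\widehat{s}_1,\dots,\widehat{s}_n\rangle$ of $\widehat{W}$. Concretely, set $\iota(s_i):=\widehat{s}_i$ for $1\le i\le n$. This extends to a well-defined homomorphism because every defining relation of $W$, namely $(s_is_j)^{m_{i,j}}=1$, is among the defining relations of $\widehat{W}$: by construction of the Coxeter matrix $\widehat{M}$ in Proposition-Definition \ref{cpcthypextsofnoncryst} one has $\widehat{m}_{i,j}=m_{i,j}$ for all $i,j\ge 1$, so the universal property of the Coxeter presentation of $W$ applies. (Equivalently, this is the standard fact that a standard parabolic subgroup of a Coxeter system is itself Coxeter, with the restricted Coxeter matrix.)

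Next I would verify that $\iota$ is a section: on generators $\pi(\iota(s_i))=\pi(\widehat{s}_i)=s_i$ by the definition of $\pi$ in Corollary \ref{projfromWhattoW}, hence $\pi\circ\iota=\mathrm{id}_W$. In particular $\iota$ is injective, so $H:=\iota(W)$ is a faithful copy of $W$ inside $\widehat{W}$. From here the semidirect product decomposition is purely formal: $Q\cap H=1$, since any $h=\iota(w)\in Q$ satisfies $w=\pi(h)=1$; and $QH=\widehat{W}$, since for any $g\in\widehat{W}$ the element $g\,\iota(\pi(g))^{-1}$ lies in $\ker\pi=Q$, whence $g\in QH$. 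Together with $Q\unlhd\widehat{W}$ this yields $\widehat{W}=Q\rtimes H\cong Q\rtimes W$.

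I do not expect a genuine obstacle here: the whole statement is the splitting lemma applied to the section coming from the parabolic embedding. The only point requiring any care is the well-definedness of $\iota$, which rests on the coincidence $\widehat{m}_{i,j}=m_{i,j}$ for $i,j\ge 1$ built into the definition of $\widehat{W}$; its injectivity is then automatic from $\pi\circ\iota=\mathrm{id}_W$, so no separate faithfulness argument for the parabolic subgroup $\widehat{W}_{\{1,\dots,n\}}$ is actually needed.
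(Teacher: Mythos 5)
Your proposal is correct and is essentially the paper's own proof, which simply observes that the assignment $s_i\mapsto\widehat{s}_i$ ($i\ge1$) extends to a splitting $W\to\widehat{W}$ of $\pi$; you have merely spelled out the details (well-definedness via $\widehat{m}_{i,j}=m_{i,j}$ for $i,j\ge1$, the verification $\pi\circ\iota=\mathrm{id}_W$ on generators, and the formal splitting-lemma conclusion) that the paper leaves implicit.
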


\begin{rem}\label{unicity_dihedrals}
In the dihedral case, there is no unicity on the reflection, nor on the diagram. Our choice here is motivated by the fact that, just like any group except for $H_3$, the reflection $r_W$ is of highest length among the reflections of $W$. First, by \cite[\S 6.9]{humphreys-reflectiongroups}, every one-node extension of a non-crystallographic $I_2(m)=\left<s,t\right>$ is compact hyperbolic. Next, take a reflection $r\in I_2(m)$ with $o(sr)=:p$ and $o(tr)=:q$, so that we obtain the diagram of Figure \ref{dih_diag}.

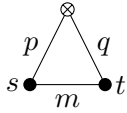
\begin{figure}[h!]
\begin{tikzpicture}
	\coordinate (a) at (-1,0);
	\coordinate (b) at (0,0);
	\coordinate (c) at (-1/2,1);
	
	\draw (a) node[left]{$s$};
	\draw (b) node[right]{$t$};
	\draw (-1/2,0) node[below]{$m$};
	\draw (-1/4,1/2) node[right]{$q$};
	\draw (-3/4,1/2) node[left]{$p$};
	
	\draw (a)--(b);
	\draw (b)--(c);
	\draw (a)--(c);
	
	\fill[fill=white] (c) circle (2.5pt);
	\node[mark size=2.5pt] at (c) {\pgfuseplotmark{otimes}};
	\fill[fill=black] (a) circle (2.5pt);
	\fill[fill=black] (b) circle (2.5pt);
\end{tikzpicture}
\caption{Extended dihedral diagram (compact hyperbolic for $5\le m\ne 6$).}\label{dih_diag}
\end{figure}
Then either $p$ or $q$ is equal to $2$ or both $p$ and $q$ divide $m$. Moreover, for any divisor $p\ne m$ of $m$, there is a diagram of this form, with reflection $r=s(st)^{m/p}$. The extreme case $r=s$ corresponds to $(p,q)=(2,m)$.

The number of reflections giving a diagram with $(p,q)=(m,m)$ is the number of $0<i<m$ such that both $(st)^i$ and $(st)^{i-1}$ have order $m$, i.e. such that $m$ is relatively prime to both $i$ and $i-1$. Thus, the reflection is far from unique: if for instance $m$ is prime, then any reflection different from $s$ and $t$ yields such a diagram.

However, we can classify the reflections giving an ``L'' diagram, i.e. one for which $q=2$ for instance. Write $r=:t(st)^j$ with $0\le j<m$. We have $1=(tr)^2=(st)^{2j}$ so that $m$ divides $2j$: say $2j=km$. But we have $0\le km=2j<2m$ so $k=0,1$. If $k=0$ then $r=t$ and so $(p,q)=(m,2)$ and if $k=1$, then $m=2j$ is even and $r=t(st)^{m/2}$. In particular, there is only one reflection for each diagram. If now $m=2e$ is even and $r=t(st)^{m/2}$, then the order of $sr=(st)^{m/2+1}$ is given by
\[o(sr)=\frac{m}{\gcd\!\left(m,\frac{m}{2}+1\right)}.\]
If $e=2\ell+1$ is odd, then $2\ell+1$ and $\ell+1$ are relatively prime so $o(sr)=e=m/2$ and if $e=2\ell$ is even, then $2\ell+1$ and $4\ell$ are relatively prime and thus $o(sr)=2e=m$.

Hence, for $m>2$, the only possible ``L'' extensions of $I_2(m)$ and associated reflections are as follows: 
\begin{enumerate}
\item If $m\ne 2\pmod 4$ then the unique extension with $q=2$ has $p=m$. In the case $m=0\pmod 4$, there are two suitable reflections, otherwise the reflection is unique.
\item If $m=2\pmod 4$, then there are two possible diagrams with $q=2$: they have $p=m$ and $p=m/2$ and there is exactly one reflection realizing each one of them.
\end{enumerate}
Moreover, if $6\ne m\ge 5$, then these extensions are compact hyperbolic.

Finally, for the crystallographic dihedral cases, if $m=3,4$ then the only infinite one-node extensions are affine, and any one-node extension of $I_2(6)$ either has $\{p,q\}=\{3,2\}$ and is the affine extension of $G_2$, or has $\{p,q\}=\{3,6\}$ and is compact hyperbolic.
\end{rem}

A (non-necessarily crystallographic) root system $\Phi$ may be associated to $W$. More precisely, the finite set $\Phi:=\bigcup_i\sigma(W)(\alpha_i)\subset V$ forms a root system in $V$, Euclidean with respect to the Tits form $B$, except that it is no longer assumed to satisfy the condition $\left<\alpha^\vee,\beta\right>\in\Z$, see \cite[\S 5.4]{humphreys-reflectiongroups}. We can mimic the case of the affine extension of a Weyl group to produce an affine representation of $\widehat{W}$ on $V$ in general. However, this representation is faithful only when $W$ is a Weyl group. More precisely, we have the following result:
\begin{prop}\label{Qisnotabelian}
Choose a root $\widetilde{\alpha}\in\Phi$ whose associated reflection is $\sigma(r_W)$ and let $\mathrm{t}_{\widetilde{\alpha}} : V\to V$ be the translation with vector $\widetilde{\alpha}$. The assignment $\widehat{s}_i\mapsto\sigma(s_i)$ for $i\ge1$ and $\widehat{s}_0\mapsto\mathrm{t}_{\widetilde{\alpha}}\sigma(r_W)$ defines an affine representation
\[\mathbf{a} : \widehat{W}\longto\mathrm{Aff}(V),\]
sending $Q$ to a free abelian group of finite rank, isomorphic to the subgroup $\left<\sigma(W)(\widetilde{\alpha})\right>$ of $V$ generated by the $\sigma(W)$-orbit of $\widetilde{\alpha}$. Moreover, the following statements are equivalent:
\begin{enumerate}[label=(\roman*)]
\item The group $W$ is a Weyl group.
\item The representation $\mathbf{a}$ is faithful.
\item The subgroup $Q\unlhd\widehat{W}$ is abelian.
\item The Coxeter group $\widehat{W}$ is affine.
\item The subgroup $\left<\sigma(W)(\widetilde{\alpha})\right>$ is a lattice in $V$.
\item The image of $\mathbf{a}$ is a discrete subgroup of $\mathrm{Aff}(V)$.
\end{enumerate}
\end{prop}
\begin{proof}
To see that $\mathbf{a}$ is well-defined, we just have to check that for $i\ge1$ and $n_i:=o(r_Ws_i)$, we have the relation $(\mathbf{a}(\widehat{s}_0)\mathbf{a}(\widehat{s}_i))^{n_i}=1$. We compute
\[(\mathbf{a}(\widehat{s}_0)\mathbf{a}(\widehat{s}_i))^{n_i}=(\mathrm{t}_{\widetilde{\alpha}}\sigma(r_Ws_i))^{n_i}=\mathrm{t}_{\widetilde{\alpha}+\sigma(r_Ws_i)(\widetilde{\alpha})+\cdots+\sigma(r_Ws_i)^{n_i-1}(\widetilde{\alpha})}=\mathrm{t}_v,\]
where $v:=\sum_{j=1}^{n_i}\sigma(r_Ws_i)^j(\widetilde{\alpha})$. Observe now that
\[\sigma(r_W)(v)=\sum_j\sigma(r_W)\sigma(r_Ws_i)^j(\widetilde{\alpha})=\sum_j\sigma(r_W)\sigma(s_ir_W)^j(\widetilde{\alpha})=\sum_j\sigma(r_Ws_i)^j\sigma(r_W)(\widetilde{\alpha})=-v\]
and since $v=\sigma(s_ir_W)(v)=-\sigma(s_i)(v)$, we also get $\sigma(s_i)(v)=-v$ and thus, $v\in\R\widetilde{\alpha}\cap\R\alpha_i=\{0\}$, as required.

Now, we have the following description of $\mathbf{a}(Q)$:
\[\mathbf{a}(Q)=\left<\mathrm{t}_x~;~x\in\sigma(W)(\widetilde{\alpha})\right>.\]
Indeed, using the Corollary \ref{projfromWhattoW}, we have
\[\mathbf{a}(Q)=\mathbf{a}\left(\left<(\widehat{s}_0r_W)^{\widehat{W}}\right>\right)=\left<\mathbf{a}(\widehat{s}_0r_W)^{\mathbf{a}(\widehat{W})}\right>=\left<\mathrm{t}_{\widetilde{\alpha}}^{\mathbf{a}(\widehat{W})}\right>=\left<\mathrm{t}_x~;~x\in\mathbf{a}(\widehat{W})(\widetilde{\alpha})\right>\]
and it thus suffices to see that $\left<\mathbf{a}(\widehat{W})(\widetilde{\alpha})\right>\subset\left<\sigma(W)(\widetilde{\alpha})\right>$. Take $x=\mathbf{a}(w)(\widetilde{\alpha})\in\mathbf{a}(\widehat{W})(\widetilde{\alpha})$ and choose a reduced expression $w=\widehat{s}_{i_1}\cdots\widehat{s}_{i_k}$ in $\widehat{W}$. We proceed by induction on the length $k=\ell(w)$ of $w$, the result being trivial if $k=0$. Since $\left<\sigma(W)(\widetilde{\alpha})\right>$ is $\sigma(W)$-stable, we may assume that $i_1=0$. By induction hypothesis, the element $y:=\mathbf{a}(\widehat{s}_{i_2}\cdots\widehat{s}_{i_k})(\widetilde{\alpha})$ is in $\left<\sigma(W)(\widetilde{\alpha})\right>$ and thus, $x=\mathbf{a}(\widehat{s}_0)(y)=\widetilde{\alpha}+\sigma(r_W)(y)\in\left<\sigma(W)(\widetilde{\alpha})\right>$, as expected. It follows that $\mathbf{a}(Q)\simeq\left<\sigma(W)(\widetilde{\alpha})\right>\le (V,+)$ is abelian and of finite rank, hence also free.

Let us move on to the equivalence and prove $(i)\Rightarrow (ii)\Rightarrow (iii)\Rightarrow (iv)\Rightarrow (i)$ first.
\begin{itemize}
\item If $W$ is a Weyl group, then the presentation of $\widehat{W}$ in the Proposition-Definition \ref{cpcthypextsofnoncryst} is precisely the same as the one for $W_{\rm a}$ (see \cite[\S 4.6]{humphreys-reflectiongroups}) and since $\mathbf{a}(\widehat{s}_0)=\mathrm{t}_{\alpha_0}s_{\alpha_0}$, where $\alpha_0\in\Phi^+$ is the highest root, the map $\mathbf{a}$ sends the generators of $\widehat{W}$ on those of $W_{\rm a}$, hence is injective and identifies $\widehat{W}$ with $W_{\rm a}\le\mathrm{Aff}(V)$.
\item If $\mathbf{a}$ is injective, then $Q\simeq\mathbf{a}(Q)\simeq\left<\sigma(W)(\widetilde{\alpha})\right>$ is abelian.
\item By definition, if $Q$ is abelian then $\widehat{W}$ is virtually abelian. This ensures that $\widehat{W}$ is affine, by \cite[Proposition 3.10]{davis-okun} (see also \cite[Corollary 1.5]{qi_thesis}).
\item If $W$ is non-crystallographic, then $\widehat{W}$ is compact hyperbolic and in particular, not affine.
\end{itemize}

We now prove that $(i)\Rightarrow (v)\Rightarrow (vi)\Rightarrow (i)$.
\begin{itemize}
\item If $W$ is a Weyl group, then $\widetilde{\alpha}=\alpha_0$ is the highest root and $\left<\sigma(W)(\alpha_0)\right>=\left<\Phi\right>=\Z\Phi$ is the root lattice of the root system $\Phi$ of $W$.
\item If $\left<\sigma(W)(\widetilde{\alpha})\right>$ is a lattice, then it is in particular discrete and thus
\[\ima(\mathbf{a})=\left\{\mathrm{t}_x\sigma(w)~;~x\in\left<\sigma(W)(\widetilde{\alpha})\right>,~w\in W\right\}=\bigcup_{w\in W}\sigma(w)(\mathbf{a}(Q))\]
is discrete, as a finite union of discrete sets.
\item We finally prove that if $W$ is non-crystallographic, then $\ima(\mathbf{a})$ is not discrete. In particular, it suffices to prove that $\mathbf{a}(Q)\simeq\left<\sigma(W)(\widetilde{\alpha})\right>\le(V,+)$ is not discrete. However, any discrete subgroup $D\le V$ is free of finite rank over $\Z$, so that for $\lambda\in\R\setminus\Q$, there is no $x\in D\setminus\{0\}$ such that $\lambda x\in D$. Therefore, to conclude that $\left<\sigma(W)(\widetilde{\alpha})\right>$ is not discrete, it suffices to find $x\in\sigma(W)(\widetilde{\alpha})$ such that $\lambda x\in\left<\sigma(W)(\widetilde{\alpha})\right>$ for some irrational $\lambda\in\R$. If $W$ is of type $H_3$ of $H_4$, then the reflections form a single conjugacy class and thus, there is only one orbit of roots and $\sigma(W)(\widetilde{\alpha})=\Phi$. Labelling the simple roots as in the diagrams in the Table \ref{hypextdiags}, we have in particular $\alpha_1\in\Phi$ and $2\cos(\pi/5)\alpha_1=s_2(\alpha_1)-\alpha_1\in\left<\Phi\right>$, allowing us to conclude because $2\cos(\pi/5)\notin\Q$. Assume finally that $W=I_2(m)=\left<s,t~|~s^2=t^2=(st)^m=1\right>$ with $m>2$. Since $W$ contains two conjugacy classes of reflections and up to re-labelling, we may assume that $r_W$ is conjugate to $s$, so that $\alpha_s\in\sigma(W)(\widetilde{\alpha})$. Since $W$ is assumed not to be crystallographic, we have $m\notin\{3,4,6\}$, so $\cos(2\pi/m)\notin\Q$ and hence, the real $c:=2\cos(\pi/m)$ has an irrational square. But we have
\begin{align*}
\sigma(st)(\alpha_s)&=\sigma(s)(\alpha_s-2B(\alpha_s,\alpha_t)\alpha_t)=\sigma(s)(\alpha_s+c\alpha_t) \\
&=-\alpha_s+c(\alpha_t+c\alpha_s)=(c^2-1)\alpha_s+c\alpha_t=(c^2-2)\alpha_s+(\alpha_s+c\alpha_t) \\
&=(c^2-2)\alpha_s+\sigma(t)(\alpha_s),
\end{align*}
thus, the element $(c^2-2)\alpha_s=\sigma(st)(\widetilde{\alpha})-\sigma(t)(\widetilde{\alpha})$ is in $\left<\sigma(W)(\widetilde{\alpha})\right>$, while $c^2-2\notin\Q$.
\end{itemize}
\end{proof}

\begin{rem}
We make the following observations:
\begin{itemize}[leftmargin=*]
\item Even in the non-crystallographic case, we may choose a highest root in the root system $\Phi$ and, as it may be checked case by case, if $W\ne H_3$, then the reflection associated to this highest root is indeed $r_W$. The extension of $H_3$ corresponding to the highest root has been considered in \cite{patera-twarock} and has the following Coxeter graph
\begin{center}
\begin{tikzpicture}[scale=0.8]
	\coordinate (a) at (-1,0);
	\coordinate (b) at (0,0);
	\coordinate (c) at (1,0);
	\coordinate (z) at (0,1);
	
	\draw (-1/2,0) node[above]{$5$};
	\draw (1/2,1/2) node[left]{$5$};
	
	\draw (a)--(b);
	\draw (b)--(c);
	\draw (b)--(z);
	
	\fill[fill=black] (c) circle (2.5pt);
	\fill[fill=black] (a) circle (2.5pt);
	\fill[fill=black] (b) circle (2.5pt);
	\fill[fill=black] (z) circle (2.5pt);
\end{tikzpicture}
\end{center}
The sub-graph 
\begin{tikzpicture}[scale=0.6]
	\coordinate (a) at (-1,0);
	\coordinate (b) at (0,0);
	\coordinate (c) at (1,0);
	
	\draw (-1/2,0) node[above]{\small{$5$}};
	\draw (1/2,0) node[above]{\small{$5$}};
	
	\draw (a)--(b);
	\draw (b)--(c);
	
	\fill[fill=black] (c) circle (2.5pt);
	\fill[fill=black] (a) circle (2.5pt);
	\fill[fill=black] (b) circle (2.5pt);
\end{tikzpicture}
is of negative type, so the extension is neither finite or affine.

\item The failure of faithfulness of the map $\mathbf{a}$ in the non-crystallographic case comes from the fact that in this case, we cannot relate the length function on the extension $\widehat{W}$ with separating reflecting hyperplanes in $V$.
\item A little bit more can be said about the subgroup $U:=\left<\sigma(W)(\widetilde{\alpha})\right>$ of $V$ in the dihedral case. Indeed, if $W=I_2(m)$ ($m=5$ or $m\ge7$), then $U$ is \emph{dense} in $V$. To see this, recall the notation from the previous proof and let $I_2(m)=\left<s,t\right>$ and assume that $r_W$ is conjugate to $s$, i.e. $\alpha_s\in\sigma(W)(\widetilde{\alpha})$. We have seen that $\alpha_s,c^2\alpha_s\in U$, where $c=2\cos(\pi/m)$. As $(c^2-2)\sigma(t)(\alpha_s)=\sigma(sts)(\alpha_s)-\alpha_s$, we also have $\sigma(t)(\alpha_s),c^2\sigma(t)(\alpha_s)\in U$, implying that $c\alpha_t,c^3\alpha_t\in U$. Because $\{\alpha_s,\alpha_t\}$ is a basis of $V\simeq\R^2$ and as $c^2\notin\Q$, the family $\{\alpha_s,c^2\alpha_s,c\alpha_t,c^3\alpha_t\}$ is free over $\Z$, generating a subgroup $A\simeq\Z^4$ of $U$. Now, since the subgroup $\left<1,c^2\right>$ is dense in $\R$, the subgroups $\left<\alpha_s,c^2\alpha_s\right>\le \R\alpha_s$ and $\left<c\alpha_t,c^3\alpha_t\right>\le\R\alpha_t$ are dense as well, so that $A\simeq \left<\alpha_s,c^2\alpha_s\right>\times\left<c\alpha_t,c^3\alpha_t\right>$ is dense in $\R\alpha_s\oplus\R\alpha_t=V$ and contained in $U$.\qed
\end{itemize}
\end{rem}

\subsection{A key property of the subgroup $Q$}
\hfill

\begin{lem}\label{Qinterpara=1}
The subgroup $Q$ trivially intersects any proper parabolic subgroup of $\widehat{W}$, i.e.
\[\forall I\subsetneq \widehat{S},~Q\cap\widehat{W}_I=1.\]
\end{lem}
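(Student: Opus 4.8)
The plan is to recast the statement as an injectivity property of $\pi$ and then use compact hyperbolicity to reduce everything to finite groups. Since $Q=\ker\pi$ (Definition~\ref{defofQ}), for every $I$ we have $Q\cap\widehat{W}_I=\ker\!\left(\pi|_{\widehat{W}_I}\right)$, so the lemma is equivalent to saying that $\pi$ is injective on every proper standard parabolic subgroup. Because $\widehat{W}$ is compact hyperbolic, every proper parabolic $\widehat{W}_I$ ($I\subsetneq\widehat{S}$) is finite (Proposition-Definition~\ref{hyperbolics} and the subsequent remark), so we are merely comparing orders of finite groups. Finally, since $\widehat{W}_I\le\widehat{W}_{I'}$ whenever $I\subseteq I'$, and any proper $I$ is contained in some $\widehat{S}\setminus\{k\}$, it suffices to prove injectivity of $\pi$ on each maximal proper parabolic $\widehat{W}_{\widehat{S}\setminus\{k\}}$, $k\in\widehat{S}$.

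First I would dispose of the case $k=0$. Here $\widehat{W}_{\widehat{S}\setminus\{0\}}=\widehat{W}_{\{1,\dots,n\}}$ is the parabolic omitting $\widehat{s}_0$, and the splitting $s_i\mapsto\widehat{s}_i$ of $\pi$ furnished by Corollary~\ref{projfromWhattoW} (together with the decomposition $\widehat{W}=Q\rtimes W$) identifies it with $W$. As $\pi$ is a retraction of this splitting, it restricts to an isomorphism $\widehat{W}_{\{1,\dots,n\}}\stackrel{\sim}{\longrightarrow}W$, whose kernel is trivial. This already settles $Q\cap\widehat{W}_I=1$ for every $I$ with $0\notin I$.

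It remains to treat the finite maximal parabolics $\widehat{W}_{\widehat{S}\setminus\{k\}}$ with $k\ge1$. By the very definition of the extended matrix $\widehat{M}$ one has $\pi(\widehat{s}_0)=r_W$, $\pi(\widehat{s}_i)=s_i$, $o(r_Ws_i)=\widehat{m}_{0,i}$ and $o(s_is_j)=m_{i,j}$; hence $\pi|_{\widehat{W}_{\widehat{S}\setminus\{k\}}}$ is a surjection onto the reflection subgroup $R_k:=\langle r_W,\,s_i:i\ne k,\ i\ge1\rangle\le W$ whose generators satisfy exactly the Coxeter relations of $\widehat{W}_{\widehat{S}\setminus\{k\}}$. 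Thus $R_k$ is a quotient of the (finite) Coxeter group $\widehat{W}_{\widehat{S}\setminus\{k\}}$, and injectivity is equivalent to the order equality $|R_k|=|\widehat{W}_{\widehat{S}\setminus\{k\}}|$, i.e.\ to the absence of any collapse. I would check this from the explicit Coxeter graphs of Proposition-Definition~\ref{cpcthypextsofnoncryst}: for $W=I_2(m)$ the maximal parabolics have rank $2$, so $R_k$ is dihedral of order $2\,o(r_Ws_i)=2\widehat{m}_{0,i}=|\widehat{W}_{\widehat{S}\setminus\{k\}}|$ automatically. For $W=H_3,H_4$ one reads off that the omitted subdiagram is of finite type and verifies that $R_k$ attains the full order; the cleanest way is to note that $R_k$ contains two of its rank-$2$ sub-dihedral groups (e.g.\ of orders $6$ and $10$ in the $H_3$ case), which already forces $|R_k|$ to be divisible by the order of the ambient finite Coxeter group, while the fact that $R_k$ is generated by genuine reflections (determinant $-1$) prevents it from lying in the orientation-preserving index-two subgroup; together these pin $|R_k|$ to the required value. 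A direct computation with the GAP package of the Introduction makes this bookkeeping routine.

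The conceptual reason behind the non-collapse is that the roots $\widetilde{\alpha}$ (of $r_W$) and the $\alpha_i$ ($i\ne k$) span $V$, and a finite reflection group acts faithfully on the span of its roots, so $R_k$ is genuinely a full-rank finite reflection group of the prescribed type rather than a proper quotient. Accordingly, the only place where real work is needed is this order/faithfulness verification in the case $0\in I$ (equivalently $k\ge1$): the reduction steps and the case $0\notin I$ are formal. Once injectivity on all maximal proper parabolics is established, the lemma holds for every proper $I$, and (since every torsion element of a Coxeter group is conjugate into a finite parabolic, and $Q$ is normal) it yields at once the torsion-freeness of $Q$ invoked in the sequel.
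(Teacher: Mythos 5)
Your reduction is exactly the paper's: since $Q=\ker\pi$ and $\widehat{W}$ is compact hyperbolic, the lemma becomes the order equality $|\pi(\widehat{W}_{\widehat{S}\setminus\{s\}})|=|\widehat{W}_{\widehat{S}\setminus\{s\}}|$ for each maximal proper parabolic, with $s=\widehat{s}_0$ disposed of by the splitting $W\to\widehat{W}$. Your handling of $I_2(m)$ is correct and in fact slightly cleaner than the paper's: for a rank-$2$ parabolic the equality is automatic, because two distinct involutions whose product has order $\widehat{m}_{0,i}=o(r_Ws_i)$ generate a dihedral group of order exactly $2\widehat{m}_{0,i}$.

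The gap is in types $H_3$ and, fatally, $H_4$, which is where the content of the lemma lies. First, your divisibility claim is false: containing dihedral subgroups of orders $6$ and $10$ forces $|R_k|$ to be divisible by $\mathrm{lcm}(6,10)=30$, not by the order of the ambient Coxeter group, and certainly not by $14400$. Second, the determinant/index-two patch cannot repair this: the reflection subgroup $H_3\times A_1\le H_4$ has full-rank root span, is generated by reflections (hence does not lie in the rotation subgroup), and contains dihedral subgroups of orders $6$, $10$ and $4$ --- it satisfies every criterion you impose, yet has order $240$. So your argument cannot distinguish the required conclusion $\langle r_W,s_2,s_3,s_4\rangle=H_4$ (for $\widehat{W}_{\widehat{S}\setminus\{\widehat{s}_1\}}\simeq H_4$, of order $14400$) from a collapse onto $H_3\times A_1$; note that for the neighbouring parabolic $\widehat{W}_{\widehat{S}\setminus\{\widehat{s}_2\}}$ the image genuinely \emph{is} $A_1\times H_3$, so both outcomes occur and no soft criterion separates them. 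Third, the ``conceptual reason'' proves too much: the full set $r_W,s_1,\dotsc,s_n$ also consists of reflections with full-rank root span satisfying the Coxeter relations of $\widehat{W}$, and yet $\pi$ is wildly non-injective; faithfulness of the image on $V$ never excludes collapse of the abstract Coxeter group onto it, and whether collapse occurs depends delicately on the choice of $r_W$ (cf.\ Proposition-Definition \ref{cpcthypextsofnoncryst} and Appendix B). What actually closes the proof --- and what the paper does --- is the case-by-case identification of the reflection subgroups $\pi(\widehat{W}_{\widehat{S}\setminus\{s\}})$: an explicit word exhibiting $s_1\in\langle r_W,s_2,s_3,s_4\rangle$, and Dyer's canonical generators \cite{dyer90} (computed with GAP) identifying the remaining images as $A_1\times H_3$, $I_2(5)^2$ and $H_3\times A_1$. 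Your closing appeal to GAP would supply exactly this verification, but it then constitutes the entire argument in types $H$, not routine bookkeeping supporting the reasoning you gave.
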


\begin{proof}
First, if $W$ is a Weyl group, then by the Proposition \ref{Qisnotabelian}, the subgroup $Q$ is torsion-free, while every proper parabolic subgroup of $\widehat{W}\simeq W_{\rm a}$ is finite and the result follows. Assume now that $W$ is non-crystallographic and recall the projection $\pi : \widehat{W}\twoheadrightarrow W$. The statement may be rephrased as follows:
\[\forall s\in\widehat{S},~\ker\left(\widehat{W}_{\widehat{S}\setminus\{s\}} \stackrel{\tiny{\pi}}\longrightarrow W\right)=1.\]
For $s=\widehat{s}_0$, this is obvious since $\widehat{W}_{\widehat{S}\setminus\{\widehat{s}_0\}}\stackrel{\tiny{\pi}}\longrightarrow W$ is an isomorphism.

Let $s\in\widehat{S}\setminus\{\widehat{s}_0\}$. Since $\widehat{W}$ is compact hyperbolic, the parabolic subgroup $\widehat{W}_{\widehat{S}\setminus\{s\}}$ is finite and, to prove that the morphism $\widehat{W}_{\widehat{S}\setminus\{s\}}\stackrel{\tiny{\pi}}\longtwoheadrightarrow \pi\left(\widehat{W}_{\widehat{S}\setminus\{s\}}\right)$ is injective, it suffices to see that 
\begin{equation}\label{bydyer}
\left|\pi\left(\widehat{W}_{\widehat{S}\setminus\{s\}}\right)\right|=\left|\widehat{W}_{\widehat{S}\setminus\{s\}}\right|. \tag{$\star_s$}
\end{equation}
The right-hand side is easily computed using the Coxeter diagram of $\widehat{W}$ (see Table \ref{hypextdiags}). To compute the left-hand side, we proceed by a case-by-case analysis. Denote by $R:=\bigcup_wwSw^{-1}$ the set of reflections of $W$ and, for $w\in W$, let
\[N(w):=\{r\in R~;~\ell(rw)<\ell(w)\}.\]
If $H\le W$ is a reflection subgroup of $W$ (i.e. if $H=\left<H\cap R\right>$), then the set
\[D(H):=\{r\in R~;~N(r)\cap H=\{r\}\}\]
is a set of Coxeter generators of $H$ (\cite[Theorem 3.3]{dyer90}). We find the Coxeter generators $D(\pi(\widehat{W}_{\widehat{S}\setminus\{s\}}))$ and determine the resulting Coxeter diagram, giving the order of $\pi(\widehat{W}_{\widehat{S}\setminus\{s\}})$.
\begin{enumerate}[label=$\bullet$,leftmargin=*]
\item If $W=I_2(2k+1)$, we have $r_W=(s_1s_2)^ks_1$ and we readily compute $s_2={s_1}^{r_W}$ and $s_1={s_2}^{r_W}$ so that $\pi(\widehat{W}_{\widehat{s}_0,\widehat{s}_1})=\pi(\widehat{W}_{\widehat{s}_0,\widehat{s}_2})=W$. On the other hand, we get from the diagram $|\widehat{W}_{\widehat{s}_0,\widehat{s}_1}|=|\widehat{W}_{\widehat{s}_0,\widehat{s}_1}|=4k+2=|W|$. This proves (\ref{bydyer}) for $s=\widehat{s}_1,\widehat{s}_2$.
\item If $W=I_2(4k)$, then $r_W=(s_1s_2)^{2k-1}s_1$ and since $s_2=(s_1r_W)^{2k-1}s_1$, we also have $\left<r_W,s_1\right>=W$ and (\ref{bydyer}) is thus true for $s=\widehat{s}_2$ as $\widehat{W}_{\widehat{s}_0,\widehat{s}_1}\simeq W$. Because $s_2r_W=r_Ws_2$, we have $\left<s_2,r_W\right>=A_1\times A_1$ and $\widehat{W}_{\widehat{s}_0,\widehat{s}_2}\simeq A_1\times A_1$ so (\ref{bydyer}) also holds for $s=\widehat{s}_1$.
\item If $W=I_2(4k+2)$, then $r_W=(s_1s_2)^{2k}s_1$ and we compute $r_Ws_1r_W=(s_1s_2)^{4k}s_1=s_2s_1s_2=s_1^{s_2}$. In the same way, we get $(s_1(s_1^{s_2}))^ks_1=(s_1s_2)^{2k}s_1=r_W$. This implies $\left<s_1,r_W\right>=\left<s_1,s_1^{s_2}\right>\simeq I_2(2k+1)\simeq \widehat{W}_{\widehat{s}_0,\widehat{s}_1}$. In fact, we have $D(\left<s_1,r_W\right>)=\{s_1,s_1^{s_2}\}$. Now, as above we have $s_2r_W=r_Ws_2$ and $\widehat{W}_{\widehat{s}_0,\widehat{s}_2}\simeq A_1\times A_1\simeq\left<s_2,r_W\right>$.
\item For $W=H_3$, special relations among reflections occur, namely
\[r_W=s_3^{(s_2s_1)^2},~s_3=r_W^{(s_1s_2)^2},~s_2=s_3(r_Ws_3s_1)^2r_Ws_3,~s_1=(r_Ws_3s_2)^2r_Ws_3r_W.\]
Hence, for $s\in\widehat{S}$, we have $\pi\left(\widehat{W}_{\widehat{S}\setminus\{s\}}\right)=W\simeq\widehat{W}_{\widehat{S}\setminus\{s\}},$ this last isomorphism being given by the diagram of $\widehat{H_3}$. Therefore, all the relations (\ref{bydyer}) hold in this case.
\item Finally, for $W=H_4$, the additional reflection is
\[r_W=s_4^{(s_3s_2s_1s_2s_3(s_1s_2)^2s_3s_4)^2}.\]
We notice the following relation
\[s_1=s_2s_3(s_4r_W)^2(s_3s_4r_Ws_2(s_3s_4r_W)^2s_2)^3s_3s_4r_Ws_4s_3s_2.\]
This proves that $s_1\in\left<r_W,s_2,s_3,s_4\right>$ so $\pi(\widehat{W}_{\widehat{s}_0,\widehat{s}_2,\widehat{s}_3,\widehat{s}_4})=W\simeq\widehat{W}_{\widehat{s}_0,\widehat{s}_2,\widehat{s}_3,\widehat{s}_4}$. We treat the remaining cases by determining the Dyer generators; calculations can be done on the sixty reflections of $H_4$ (though easier using \cite{GAP4}). We obtain
\[\pi\left(\widehat{W}_{\widehat{s}_0,\widehat{s}_1,\widehat{s}_3,\widehat{s}_4}\right)=\left<r_W,s_1,s_3,s_4\right>=\left<s_1^{s_2s_3(s_1s_2)^2},s_1,s_3,s_4\right>\simeq A_1\times H_3\simeq\widehat{W}_{\widehat{s}_0,\widehat{s}_1,\widehat{s}_3,\widehat{s}_4},\]
\[\pi\left(\widehat{W}_{\widehat{s}_0,\widehat{s}_1,\widehat{s}_2,\widehat{s}_4}\right)=\left<r_W,s_1,s_2,s_4\right>=\left<s_3^{s_4s_2s_1s_2s_3(s_1s_2)^2s_3},s_1,s_2,s_4\right>\simeq I_2(5)^2\simeq\widehat{W}_{\widehat{s}_0,\widehat{s}_1,\widehat{s}_2,\widehat{s}_4}\]
and finally,
\[\pi\left(\widehat{W}_{\widehat{s}_0,\widehat{s}_1,\widehat{s}_2,\widehat{s}_3}\right)=\left<r_W,s_1,s_2,s_3\right>\simeq H_3\times A_1\simeq\widehat{W}_{\widehat{s}_0,\widehat{s}_1,\widehat{s}_2,\widehat{s}_3}.\]
This establishes the relations (\ref{bydyer}) for $W=H_4$, finishing the proof.
\end{enumerate}
\end{proof}

\begin{cor}\label{Qistorsionfree}
The group $Q$ is torsion-free.
\end{cor}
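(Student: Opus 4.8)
The plan is to combine Lemma \ref{Qinterpara=1} with the classical fact that torsion in a Coxeter group is always \emph{parabolic}. Precisely, I would invoke the standard result that in any Coxeter system $(\widehat{W},\widehat{S})$ every element of finite order is conjugate to an element of a finite standard parabolic subgroup $\widehat{W}_I$ (equivalently, every finite subgroup of $\widehat{W}$ lies in some conjugate $w\widehat{W}_Iw^{-1}$ of a finite standard parabolic). This is most transparent via the action of $\widehat{W}$ on its Davis complex, which is $\mathrm{CAT}(0)$: finite subgroups have fixed points and the stabilizer of any point is a finite parabolic subgroup; see the standard theory of Coxeter groups, e.g. \cite{bourbaki456}.

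First I would record the structural consequence of $\widehat{W}$ being compact hyperbolic (Proposition-Definition \ref{cpcthypextsofnoncryst} together with Remark \ref{hyperfromhum}): the whole group $\widehat{W}=\widehat{W}_{\widehat{S}}$ is infinite, while every proper parabolic subgroup is finite. Hence a standard parabolic subgroup $\widehat{W}_I$ is finite precisely when $I\subsetneq\widehat{S}$, so every finite-order element of $\widehat{W}$ in fact lies in a conjugate of a \emph{proper} standard parabolic subgroup.

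Now suppose $q\in Q$ has finite order; the goal is to show $q=1$. By the above, there exist $w\in\widehat{W}$ and $I\subsetneq\widehat{S}$ with $w^{-1}qw\in\widehat{W}_I$. Since $Q\unlhd\widehat{W}$ (Definition \ref{defofQ}), the conjugate $w^{-1}qw$ again lies in $Q$, so $w^{-1}qw\in Q\cap\widehat{W}_I$. By Lemma \ref{Qinterpara=1} we have $Q\cap\widehat{W}_I=1$, whence $w^{-1}qw=1$ and therefore $q=1$. Thus $Q$ contains no non-trivial element of finite order, i.e. $Q$ is torsion-free.

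The only step requiring genuine input, rather than a one-line deduction, is the appeal to the ``torsion is parabolic'' theorem; everything else follows immediately from normality of $Q$ and the already-established Lemma \ref{Qinterpara=1}. Note that in the crystallographic case the conclusion is anyway immediate, since there $Q\simeq\Z\Phi^\vee\simeq\Z^n$ is free abelian (Remark \ref{Qisnotabelian}), so the substance of the statement lies entirely in the hyperbolic cases.
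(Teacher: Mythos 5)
Your proof is correct and follows essentially the same route as the paper: the paper also reduces to the fact that every finite-order element of $\widehat{W}$ lies in a conjugate of a finite (hence proper) standard parabolic subgroup --- citing Tits' theorem via \cite[Theorem 3.10]{qi_thesis} rather than the Davis-complex argument you sketch --- and then concludes exactly as you do from normality of $Q$ and Lemma \ref{Qinterpara=1}.
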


\begin{proof}
If $q\in Q$ has finite order, then by a theorem of Tits (\cite[\S V.4, Exercise 2.d]{bourbaki456}), there are $w\in\widehat{W}$ and $J\subsetneq\widehat{S}$ such that $q\in w\widehat{W}_Jw^{-1}$ and since $Q\unlhd\widehat{W}$, we get $q^w\in Q\cap\widehat{W}_J=1$.
\end{proof}

\subsection{The hyperbolic torus $\mathbf{T}(W)$ of $W$ and its first properties}
\hfill

Before defining the manifold $\mathbf{T}(W)$, we have to study the action of the subgroup $Q\unlhd\widehat{W}$ on the Tits cone of $\widehat{W}$. Recall some notation: define $\widehat{V}:=\mathrm{span}_\R(\alpha_s,~s\in\widehat{S})$ and the symmetric bilinear form $\widehat{B}$ by $\widehat{B}(\alpha_s,\alpha_t)=-\cos\left({\pi}/{\widehat{m}_{s,t}}\right)$, with $(\widehat{m}_{s,t})_{s,t\in\widehat{S}}$ the Coxeter matrix of $(\widehat{W},\widehat{S})$. If $\widehat{W}$ is hyperbolic, then the form $\widehat{B}$ has signature $(n-1,1)$, otherwise $\widehat{W}\simeq W_{\rm a}$ is affine and so $\widehat{B}$ has signature $(n-1,0)$. We also have the geometric representation $\widehat{\sigma} : \widehat{W} \longhookrightarrow O(\widehat{V},\widehat{B})$ and consider its contragredient representation 
\begin{equation}\label{contragredient}\tag{$\circledast$}
\widehat{\sigma}^* : \widehat{W}\longhookrightarrow GL(\widehat{V}^*)
\end{equation}
and define $(\alpha_s^\vee)_{s\in\widehat{S}}$ to be the basis of $\widehat{V}^*$, dual to $(\alpha_s)_{s\in\widehat{S}}$. We have $\widehat{\sigma}^*(w)={}^{\rm t\!}{(\widehat{\sigma}(w^{-1}))}$, i.e.
\[\forall s,t\in\widehat{S},~\widehat{\sigma}^*(s)(\alpha_t^\vee)=\alpha_t^\vee-2\delta_{s,t}\widehat{B}(-,\alpha_s).\]
The duality pairing of $\widehat{V}$ is denoted $\left<\cdot,\cdot\right>$ as usual. For $s\in\widehat{S}$, let moreover
\[H_s:=\{\lambda\in\widehat{V}^*~;~\left<\lambda,\alpha_s\right>=0\}~~\text{and}~~A_s:=\{\lambda\in\widehat{V}^*~;~\left<\lambda,\alpha_s\right>>0\}\]
and consider the respective \emph{fundamental chamber} and \emph{Tits cone}
\[C:=\{\lambda\in\widehat{V}^*~;~\left<\lambda,\alpha_{s}\right>>0,~\forall s\in\widehat{S}\}=\bigcap_{s\in\widehat{S}}A_s~~\text{and}~~X:=\bigcup_{w\in\widehat{W}}w(\overline{C}).\]
This is a convex cone and $\overline{C}$ is a fundamental domain for $\widehat{W}$ acting on $X$. For $I\subseteq\widehat{S}$ we let
\[C_I:=\left(\bigcap_{s\in I}H_s\right)\cap\left(\bigcap_{s\notin I}A_s\right)\subset\overline{C},\]
in particular $C_\emptyset=C$, $C_{\widehat{S}}=\{0\}$ and we have $\overline{C}=\bigsqcup_{I\subseteq\widehat{S}}C_I$. We have the \emph{Coxeter complex}
\[\widehat{\Sigma}:=\Sigma(\widehat{W},\widehat{S})=(X\setminus\{0\})/\R_+^*.\]
This is a $\widehat{W}$-pseudomanifold and we have a decomposition
\[\widehat{\Sigma}=\bigcup_{w\in\widehat{W},~I\subsetneq\widehat{S}}\R_+^*w(\overline{C_I})\]
which is in fact a $\widehat{W}$-triangulation since $\R_+^*w(\overline{C_I})$ may be identified with the standard $(n-|I|)$-simplex: $\R_+^*w(\overline{C_I})\simeq\Delta^{n-|I|}$. Moreover, since $\widehat{W}$ is infinite, $\widehat{\Sigma}$ is contractible and by \cite[III, \S 2, Corollary 3]{brown_buildings}, as every proper parabolic subgroup of $\widehat{W}$ is finite, the pseudomanifold $\widehat{\Sigma}$ is in fact a smooth $n$-manifold.

We can give a natural simplicial structure to the Coxeter complex (see \cite[Corollary 2.6]{babson-reiner}). Consider the set of parabolic cosets of $\widehat{W}$
\[P(\widehat{W},\widehat{S}):=\{w\widehat{W}_I~;~w\in\widehat{W},~I\subsetneq\widehat{S}\}.\]
We partially order this set as follows:
\[w\widehat{W}_I \preceq w'\widehat{W}_J~\stackrel{\tiny{\text{df}}}\Longleftrightarrow~w\widehat{W}_I\supseteq w'\widehat{W}_J.\]
Notice that $w\widehat{W}_I\preceq w'\widehat{W}_J$ implies $w\widehat{W}_I=w'\widehat{W}_I$ and $J\subset I$. We define the simplicial complex $\Delta(\widehat{W},\widehat{S})$ as the nerve of this poset:
\[\Delta(\widehat{W},\widehat{S}):=\mathcal{N}(P(\widehat{W},\widehat{S}),\preceq).\]

Let $F(\widehat{\Sigma})$ be the face lattice of $\widehat{\Sigma}$ with respect to the triangulation described above. Then we have an isomorphism of posets
\[\begin{array}{ccc}
(P(\widehat{W},\widehat{S}),\preceq) & \stackrel{\tiny{\sim}}\longrightarrow & (F(\widehat{\Sigma}),\subseteq) \\ w\widehat{W}_I & \longmapsto & \R_+^*w(\overline{C_I})\end{array}\]
and this yields a $\widehat{W}$-equivariant homeomorphism
\[|\Delta(\widehat{W},\widehat{S})|\stackrel{\tiny{\sim}}\longrightarrow \widehat{\Sigma}.\]

Now, recall that an action of a group $G$ on a space $Z$ is said to be \emph{properly discontinuous} (or a \emph{covering space action}, see \cite[\S 1.3]{hatcher}) if every point $z\in Z$ has an open neighbourhood $z\in U\subset Z$ such that if $g\in G$ is such that $gU\cap U\ne\emptyset$, then $g=1$, i.e. such that
\[O_G(U):=\{g\in G~;~g(U)\cap U\ne\emptyset\}=\{1\}.\]

\begin{lem}\label{Qactsasitshould}
Recall from (\ref{contragredient}) the representation $\widehat{\sigma}^*$. The action of the discrete subgroup $\widehat{\sigma}^*(Q)\le GL(\widehat{V}^*)$ on the Coxeter complex $\widehat{\Sigma}$ is free and properly discontinuous.
\end{lem}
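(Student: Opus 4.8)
The plan is to compute the point-stabilizers of the $\widehat{W}$-action on $\widehat{\Sigma}$, deduce freeness of $Q$ from the earlier lemmas, separately establish that the ambient action is proper, and then combine the two into the local covering-space condition $O_Q(U)=\{1\}$.

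First I would handle \emph{freeness}. Every $z\in\widehat{\Sigma}$ lies in the relative interior of a unique simplex, its carrier, which by the poset isomorphism $(P(\widehat{W},\widehat{S}),\preceq)\simeq(\mathcal{P}(\widehat{\Sigma}),\subseteq)$ has the form $\R_+^*w(\overline{C_I})$ for some $w\in\widehat{W}$ and some $I\subsetneq\widehat{S}$. Since $\widehat{\sigma}^*(g)$ is a simplicial automorphism, any $g$ fixing $z$ must preserve its carrier setwise, and the setwise stabilizer of $\R_+^*w(\overline{C_I})$ is the conjugate parabolic $w\widehat{W}_Iw^{-1}$ (this is the $\widehat{W}$-action by left translation on the coset $w\widehat{W}_I$). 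Hence $\mathrm{Stab}_{\widehat{W}}(z)\le w\widehat{W}_Iw^{-1}$. If now $q\in Q$ fixes $z$, then, using that $Q$ is normal in $\widehat{W}$,
\[
q\in Q\cap w\widehat{W}_Iw^{-1}=w\bigl(Q\cap\widehat{W}_I\bigr)w^{-1},
\]
which is trivial by Lemma \ref{Qinterpara=1} since $I\subsetneq\widehat{S}$; so $q=1$. (Equivalently, as $\widehat{W}$ is compact hyperbolic the group $w\widehat{W}_Iw^{-1}$ is finite, and $Q$ is torsion-free by Corollary \ref{Qistorsionfree}, forcing the intersection to be trivial.) This shows the $Q$-action is free.

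Next I would record that the ambient action is \emph{proper}. Two interchangeable justifications are available: geometrically, by Remark \ref{hyperfromhum}, $\widehat{\sigma}^*(\widehat{W})$ is a discrete subgroup of $O(\widehat{V},\widehat{B})$ realized as a discrete group of isometries of the hyperbolic space modelled on a component of $\{\lambda~;~\widehat{B}(\lambda,\lambda)=-1\}$, and discrete isometry groups act properly; combinatorially, every proper parabolic of $\widehat{W}$ is finite, so each simplex of $\widehat{\Sigma}$ has finite stabilizer, the complex is locally finite (the links are finite Coxeter complexes of finite parabolics), and the action is cocompact with fundamental domain $\overline{C}$. A cocompact simplicial action with finite cell-stabilizers on a locally finite complex is proper, i.e. for every compact $K\subset\widehat{\Sigma}$ the set $\{g\in\widehat{W}~;~g(K)\cap K\ne\emptyset\}$ is finite. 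Restricting to the subgroup $Q\le\widehat{W}$, the corresponding set $\{q\in Q~;~q(K)\cap K\ne\emptyset\}$ is again finite, so $Q$ acts properly.

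Finally I would combine freeness and properness to produce the covering-space condition. Given $z$, choose a compact neighbourhood $K\ni z$; then $F:=\{q\in Q~;~q(K)\cap K\ne\emptyset\}$ is finite, and by freeness $qz\ne z$ for every $q\in F\setminus\{1\}$. Using that $\widehat{\Sigma}$ is Hausdorff I can shrink $K$ to an open neighbourhood $U\ni z$ with $q(U)\cap U=\emptyset$ for each of the finitely many $q\in F\setminus\{1\}$, while for $q\notin F$ one already has $q(U)\cap U\subseteq q(K)\cap K=\emptyset$. Thus $O_Q(U)=\{1\}$, which is precisely the definition of a properly discontinuous (covering space) action, see \cite[\S1.3]{hatcher}. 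The \textbf{main obstacle} is not the freeness, which is essentially immediate from Lemma \ref{Qinterpara=1} and Corollary \ref{Qistorsionfree}, but the passage from the combinatorial/stabilizer picture to the genuine local condition: one must justify properness of the ambient $\widehat{W}$-action (via discreteness in $O(\widehat{V},\widehat{B})$, or via local finiteness and cocompactness) before the neighbourhood $U$ can be extracted.
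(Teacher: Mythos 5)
Your proof is correct, but the way you obtain proper discontinuity is genuinely different from the paper's. The freeness part is essentially the paper's argument in different clothing: the paper pins down the stabilizer via the cone property of $C_I$ and \cite[V, \S 4, Proposition 5]{bourbaki456}, you via carriers and the $\widehat{W}$-equivariant poset isomorphism; both reduce to $Q\cap w\widehat{W}_Iw^{-1}=w(Q\cap\widehat{W}_I)w^{-1}=1$, i.e.\ to Lemma \ref{Qinterpara=1} plus normality of $Q$. For the local condition $O_Q(U)=\{1\}$, however, the paper never proves global properness: it builds, around each point of $C_I$, an explicit wandering neighbourhood $A$ (the interior of $\bigcup_{v\in\widehat{W}_I}v(\overline{C})$), bounds $O_{\widehat{W}}(A)$ inside the finite set $\bigcup_{u,v\in\widehat{W}_I,\,J\subsetneq\widehat{S}}u\widehat{W}_Jv$ by a four-case analysis resting on Tits' lemma, and then shrinks $A$ to a $\widehat{W}_I$-stable slice $U$ with $O_{\widehat{W}}(U)\subseteq\widehat{W}_I$, concluding once more by Lemma \ref{Qinterpara=1}. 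You instead prove the stronger, global statement that all of $\widehat{W}$ acts properly on $\widehat{\Sigma}$ and then localize by the standard \emph{free + proper $\Rightarrow$ covering space action} compactness argument. Your combinatorial justification of properness is sound and arguably cleaner than the paper's case analysis: a compact set meets finitely many cells, a simplicial transporter $\{g\,;\,g\sigma=\tau\}$ is a coset of the (finite, since $I\subsetneq\widehat{S}$) stabilizer, hence $\{g\,;\,gK\cap K\ne\emptyset\}$ is finite — note that cocompactness is not even needed here, only finiteness of proper parabolics, which is exactly where compact hyperbolicity enters in both proofs. Your geometric justification (discrete subgroups of $\mathrm{Isom}(\mathbb{H}^n)$ act properly) is also legitimate but forward-references the isometry $\widehat{\Sigma}\simeq\mathbb{H}^n$, which the paper only sets up inside the proof of Theorem \ref{defsteinbergtorus}, after this lemma; there is no circularity, since that identification does not use the present lemma, but the combinatorial route fits the paper's logical order better. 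What each approach buys: the paper's slice construction stays entirely inside the Tits-cone combinatorics and yields $\widehat{W}_I$-stable slices for the full $\widehat{W}$-action, a slightly finer output; yours is more modular, isolates a reusable properness statement, and makes the final step a completely standard general-topology argument.
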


\begin{proof}
Of course, we identify the group $\widehat{W}$ with $\widehat{\sigma}^*(\widehat{W})$. Let $\overline{x}\in\widehat{\Sigma}$ (with $x\in X\setminus\{0\}$). First, we prove that $q(\overline{x})\ne \overline{x}$ for $q\in Q\setminus\{1\}$. To say that $q(\overline{x})=\overline{x}$ amounts to say that $q(x)=ax$ for some $a\in\R_+^*$ and we may assume that $x\in\overline{C}\setminus\{0\}$ since $Q\unlhd\widehat{W}$. There is some $I\subsetneq\widehat{S}$ such that $x\in C_I$ and because $C_I$ is a cone, we have $ax\in C_I\cap q(C_I)\ne\emptyset$ and by \cite[V, \S 4, Proposition 5]{bourbaki456}, we obtain $q(C_I)=C_I$ so $q\in \widehat{W}_I\cap Q=1$ by Lemma \ref{Qinterpara=1}.

To prove that the action is properly discontinuous at $\overline{x}$, we have to find an open neighbourhood $U$ of $\overline{x}$ in $\widehat{\Sigma}$ such that $O_Q(U)=\{1\}$. By definition of the topology on the Coxeter complex, it suffices to prove the statement for $X\setminus\{0\}$. 

First, we show that the action of $\widehat{W}$ is \emph{wandering} at $x$, that is, we can find an open neighbourhood $A$ of $x$ such that $O_{\widehat{W}}(A)$ is finite. We may assume that $x\in\overline{C}\setminus\{0\}$, say $x\in C_I$ with $I\subsetneq\widehat{S}$. Define $A$ to be the interior in $X\setminus\{0\}$ of the subset $\bigcup_{v\in \widehat{W}_I}v(\overline{C})$. We prove that there are only finitely many $w\in\widehat{W}$ such that $A\cap w(A)\ne\emptyset$. Suppose that $w\in O_{\widehat{W}}(A)$ and choose $a\in A$ with $w(a)\in A$. Notice that we have
\[A\subseteq \bigcup_{u\in\widehat{W}_I}u(C)\cup\bigcup_{\substack{v\in\widehat{W}_I \\ s\in I}}v(H_s\cap\partial\overline{C}).\]
Thus, we distinguish four cases:
\begin{enumerate}[label=$\bullet$]
\item As $\widehat{W}$ acts on $X\setminus\bigcup_{s\in\widehat{S}}H_s$, we cannot have $a\in\bigcup_{v,s}v(H_s\cap\partial\overline{C})$ and $w(a)\in\bigcup_u u(C)$.
\item Similarly, we cannot have $a\in\bigcup_u u(C)$ and $w(a)\in\bigcup_{v,s}v(H_s\cap\partial\overline{C})$.
\item Suppose that $a\in\bigcup_v v(C)$ and $w(a)\in\bigcup_v v(C)$, say $a\in u(C)$ and $w(a)\in v(C)$. This implies $u^{-1}(a)\in C$ and $v^{-1}w(a)=v^{-1}wu(u^{-1}(a))\in C$, thus $uv^{-1}w(C)\cap C\ne\emptyset$ and so $w=vu^{-1}\in\widehat{W}_I$ by Tits' lemma.
\item Suppose now that we have $a\in\bigcup_{v,s}v(H_s\cap\partial\overline{C})$ and $w(a)\in\bigcup_{v,s}v(H_s\cap\partial\overline{C})$, say $a\in u(H_s\cap\partial\overline{C})$ and $w(a)\in v(H_t\cap\partial\overline{C})$. This implies $u^{-1}(a)\in\overline{C}$ and $v^{-1}wu(u^{-1}(a))=v^{-1}w(a)\in\overline{C}$ and by \cite[V, \S 4, Proposition 6]{bourbaki456} we get $v^{-1}w(a)=u^{-1}(a)$ and thus $uv^{-1}w\in (\widehat{W})_a=u\widehat{W}_Ju^{-1}$ for some $J\subsetneq\widehat{S}$ (in fact, $J$ is defined by the condition $\widehat{W}_J=(\widehat{W})_{u^{-1}(a)}$). Therefore, we have $w\in v\widehat{W}_Ju^{-1}$.
\end{enumerate}
In any case, we have
\[O_{\widehat{W}}(A)\stackrel{\tiny{\text{df}}}=\{w\in\widehat{W}~;~w(A)\cap A\ne\emptyset\}\subset\bigcup_{\substack{u,v\in\widehat{W}_I \\ J\subsetneq\widehat{S}}}u\widehat{W}_Jv.\]
However, as $\widehat{W}$ is compact, any proper parabolic subgroup is finite and so this last subset is finite and $O_{\widehat{W}}(A)$ is then finite as well.

The rest of the proof is very standard. For each $w\in O_{\widehat{W}}(A)\setminus\widehat{W}_I$ we have $w(x)\ne x$ and we may choose an open subset $A_w$ such that $x\in A_w\subset A$ and $w(A_w)\cap A_w=\emptyset$ and define
\[B:=\bigcap_{w\in O_{\widehat{W}}(A)\setminus\widehat{W}_I}A_w \subset A.\]
Because $O_{\widehat{W}}(A)$ is finite, $B$ is open and let $w'\in O_{\widehat{W}}(B)\subset O_{\widehat{W}}(A)$. We must have $w'\in\widehat{W}_I$ because otherwise, $\emptyset\ne B\cap w'(B)\subset A_{w'}\cap w'(A_{w'})=\emptyset$. Consider the open subset
\[U:=\bigcap_{w\in\widehat{W}_I}w(B)\subset B.\]
We have $O_{\widehat{W}}(U)\subset O_{\widehat{W}}(B)\subset\widehat{W}_I$ and $U$ is $\widehat{W}_I$-stable (i.e. $U$ is a $\widehat{W}$-\emph{slice} at $x$). In particular, if $q\in Q\setminus\{1\}$, then $q\notin\widehat{W}_I$ by Lemma \ref{Qinterpara=1} and thus $q\notin O_{\widehat{W}}(U)$.
\end{proof}

We arrive then to the main result of this section. Remark that the Tits form $\widehat{B}$ induces a Riemannian metric on the Coxeter complex $\widehat{\Sigma}$.

\begin{theo}\label{defsteinbergtorus}
Let $(W,S)$ be a finite irreducible Coxeter group of rank $n$ and $(\widehat{W},\widehat{S})$ be either the affine extension corresponding to a fixed root system $\Phi$ for $W$ if $W$ is crystallographic, or the extension constructed above otherwise, with $Q:=\ker(\widehat{W}\twoheadrightarrow W)$. If $\widehat{\sigma}^*$ denotes the contragredient geometric representation (as in (\ref{contragredient})), then the orbit space
\[\mathbf{T}(W):=\widehat{\Sigma}/\widehat{\sigma}^*(Q)\]
is a closed, connected, orientable, compact smooth $W$-manifold of dimension $n$.

If $W$ is a Weyl group, then we have a diffeomorphism $\widehat{\Sigma}\simeq\R^n$ and the manifold $\mathbf{T}(W)$ is $W$-diffeomorphic to a maximal torus of the simply-connected compact Lie group with root system $\Phi$. Otherwise, the Riemannian manifold $\widehat{\Sigma}$ is isometric to the hyperbolic $n$-space $\mathbb{H}^n$ and $\mathbf{T}(W)\simeq\mathbb{H}^n/Q$ is a hyperbolic $W$-manifold.

Furthermore, the canonical projection yields a covering space
\[Q\longhookrightarrow \widehat{\Sigma} \longtwoheadrightarrow \mathbf{T}(W)\]
and the quotient simplicial complex $\Delta(\widehat{W},\widehat{S})/Q$ is a regular $W$-triangulation of $\mathbf{T}(W)$.
\end{theo}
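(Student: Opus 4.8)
The plan is to read off the triangulation from the equivariant homeomorphism $|\Delta(\widehat{W},\widehat{S})|\cong\widehat{\Sigma}$ and to show that quotienting by $Q$ does not degenerate the simplicial structure. First I would recall the data already at hand: the $\widehat{W}$-equivariant homeomorphism $|\Delta(\widehat{W},\widehat{S})|\cong\widehat{\Sigma}$, the covering $p:\widehat{\Sigma}\twoheadrightarrow\mathbf{T}(W)=\widehat{\Sigma}/Q$ with $Q$ acting freely and properly discontinuously (Lemma \ref{Qactsasitshould}, Corollary \ref{Qistorsionfree}), and the normality $Q\unlhd\widehat{W}$, which makes $Q$ act on the poset $P(\widehat{W},\widehat{S})$ of parabolic cosets and hence simplicially on its nerve $\Delta(\widehat{W},\widehat{S})$. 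Since $\Delta(\widehat{W},\widehat{S})$ is an order complex, every element of $\widehat{W}$ acts \emph{without inversions}: an order-preserving permutation fixing a finite chain setwise fixes it pointwise.

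The crux of the argument --- and the step I expect to be the main obstacle --- is to show that the $Q$-action is \emph{regular}, i.e. that whenever a simplex $\sigma$ and a translate $q\sigma$ (with $q\in Q$) are faces of one common simplex of $\Delta(\widehat{W},\widehat{S})$, then $q=1$. Write $\sigma$ as a chain of parabolic cosets $w\widehat{W}_{I_0}\prec\cdots\prec w\widehat{W}_{I_k}$ with a common representative $w$ (chosen in the smallest coset) and all $I_j\subsetneq\widehat{S}$, since every genuine face corresponds to a proper subset. If $\sigma\cup q\sigma$ is again a chain, then all of its cosets are totally ordered by reverse inclusion; in particular the two cosets $w\widehat{W}_{I_0}$ and $qw\widehat{W}_{I_0}$ of the same parabolic $\widehat{W}_{I_0}$ are comparable, hence equal. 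Thus $w^{-1}qw\in\widehat{W}_{I_0}$, and since $Q$ is normal this forces $w^{-1}qw\in Q\cap\widehat{W}_{I_0}=1$ by Lemma \ref{Qinterpara=1} (as $I_0$ is proper), so $q=1$.

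Granting regularity, the standard criterion for quotients of simplicial complexes by a group acting without inversions applies: $\Delta(\widehat{W},\widehat{S})/Q$ is an honest simplicial complex (each simplex embeds, and two simplices meet along a common face), and $p$ restricts to a simplicial isomorphism on every simplex of $\Delta(\widehat{W},\widehat{S})$. Because geometric realization commutes with such quotients, $|\Delta(\widehat{W},\widehat{S})/Q|\cong|\Delta(\widehat{W},\widehat{S})|/Q\cong\widehat{\Sigma}/Q=\mathbf{T}(W)$, so this complex triangulates $\mathbf{T}(W)$.

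It remains to install the $W$-action. Here I would invoke Lemma \ref{GCWandSDP} for the decomposition $\widehat{W}=Q\rtimes W$, applied to the $\widehat{W}$-CW-complex $\widehat{\Sigma}$: since $\widehat{\Sigma}/Q=\mathbf{T}(W)$ is Hausdorff, the deflation $\mathrm{Def}^{\widehat{W}}_{W}$ equips $\mathbf{T}(W)$ with a $W$-CW-structure whose cells are exactly the images under $p$ of the simplices of $\Delta(\widehat{W},\widehat{S})$. By the previous step these images are embedded simplices, so this $W$-CW-structure is simplicial, and the without-inversions property descends from $\widehat{W}$ to $W=\widehat{W}/Q$. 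Hence $\Delta(\widehat{W},\widehat{S})/Q$ is a regular $W$-triangulation of $\mathbf{T}(W)$, as claimed.
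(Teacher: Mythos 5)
Your treatment of the final assertion --- that $\Delta(\widehat{W},\widehat{S})/Q$ is a regular $W$-triangulation --- is correct, and in fact more detailed than the paper's own proof, which essentially leaves that claim implicit. The regularity step (two comparable cosets of the same parabolic are equal, hence $w^{-1}qw\in Q\cap\widehat{W}_{I_0}=1$ by Lemma \ref{Qinterpara=1}, using that all simplices of the Coxeter complex correspond to \emph{proper} parabolic cosets) is exactly the right use of that lemma, and descending the $W$-action through $\widehat{W}=Q\rtimes W$ via Lemma \ref{GCWandSDP} matches the paper's mechanism. The problem is that this is only one of the theorem's assertions, and the proposal proves essentially nothing else.

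Concretely, the following claims are left without argument. \emph{Orientability}: nothing in your proof explains why $Q$ acts by orientation-preserving maps; the paper's argument is that $Q$ is normally generated by $\widehat{s}_0\widehat{r_W}$, that $\ell(\widehat{r_W})$ is odd, hence $\varepsilon(\widehat{s}_0\widehat{r_W})=1$ and $Q\le\ker(\varepsilon)$ --- a genuine computation, not a formality. \emph{Compactness}: your quotient complex is an infinite complex modulo $Q$, and one still has to see the quotient is compact; the paper does this by noting that the compact set $\overline{C}/\R_+^*$ is a $\widehat{W}$-fundamental domain in $\widehat{\Sigma}$, so its image is a fundamental domain for the \emph{finite} group $W$ acting on $\mathbf{T}(W)$. \emph{Smoothness}: you have the free, properly discontinuous action from Lemma \ref{Qactsasitshould}, but never invoke the quotient manifold theorem to get the smooth structure. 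Most importantly, the two \emph{geometric identifications} that give the theorem its content are absent: in the Weyl case the isometry $\widehat{\Sigma}\simeq\R^n$ with $Q\simeq\Z^n$ and the $W$-diffeomorphism with a maximal torus of the simply-connected group (the paper derives this from Remark \ref{Qisnotabelian}); in the non-crystallographic case the isometry of $\widehat{\Sigma}$ with the hyperboloid sheet $\mathcal{H}=\{\lambda\in\widehat{V}^*~;~\widehat{B}(\lambda,\lambda)=-1,~\widehat{B}(v^*,\lambda)<0\}\simeq\mathbb{H}^n$, together with the fact that $Q$ preserves $\widehat{B}$, which is what makes $\mathbf{T}(W)$ a hyperbolic Riemannian $W$-manifold. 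Without these, what you have shown is that the quotient of the Coxeter complex by $Q$ is a triangulated $W$-space with a covering projection --- a genuinely smaller statement than the theorem.
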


\begin{proof}
Since $\widehat{\Sigma}$ is a closed smooth manifold and the action $\widehat{\sigma}^*(Q)~\actleft~\widehat{\Sigma}$ is properly discontinous by Lemma \ref{Qactsasitshould}, the quotient manifold theorem ensures that $\mathbf{T}(W)$ is indeed a closed smooth manifold and by \cite[Proposition 1.40]{hatcher}, the projection $\widehat{\Sigma}\twoheadrightarrow\mathbf{T}(W)$ is a covering map. Moreover, $\mathbf{T}(W)$ is connected since the Coxeter complex is and, as $(\overline{C}\setminus\{0\})/\R_+^*\simeq\overline{C}\cap\Sph^n$ is a $\widehat{W}$-fundamental domain on the Coxeter complex, its projection onto $\mathbf{T}(W)$ is a $W$-fundamental domain, hence $\mathbf{T}(W)$ is compact ($W$ is finite). Since $Q$ is normally generated by $\widehat{s}_0\widehat{r_W}$ and because $\ell(\widehat{r_W})$ is odd, we have $\varepsilon(\widehat{s}_0\widehat{r_W})=1$ and so $Q\le\ker(\varepsilon)$. This proves that the action of $Q$ on $\widehat{\Sigma}$ preserves the orientation, ensuring the orientability of $\mathbf{T}(W)$.

Assume now that $W$ is a Weyl group. Then $\widehat{W}=W_{\rm a}$ is affine and the Tits form $\widehat{B}$ has signature $(n,0)$. By \cite[V, \S 4.9, Lemma 2]{bourbaki456}, the radical of $\widehat{B}$ is generated by a vector $u=\sum_{s\in\widehat{S}}u_s\alpha_s$ with $u_s>0$ for every $s$. Therefore, if $\lambda\in X\setminus\{0\}$ is written as $\lambda=\sum_s\lambda_s\alpha_s^\vee$ with $\lambda_s\ge0$, then we have $\lambda(u)=\sum_s\lambda_su_s>0$ since at least one of the $\lambda_s$'s is positive. The map $\lambda\mapsto \lambda/\lambda(u)$ thus induces a $\widehat{W}$-equivariant diffeomorphism
\[\widehat{\Sigma}\stackrel{\tiny{\sim}}\longto X\cap\mathcal{E},\]
where $\mathcal{E}:=\{\lambda\in\widehat{V}^*~;~\lambda(u)=1\}$. Moreover, by \cite[V, \S 5.9, Proposition 10]{bourbaki456}, we also have $X\cap\mathcal{E}=\mathcal{E}$. As explained in \cite[\S 6.5]{humphreys-reflectiongroups}, the space $\mathcal{E}$ identifies with the $n$-dimensional Euclidean space and the restricted action $\widehat{W}~\actleft~\mathcal{E}$ identifies with the standard action of the affine Weyl group $W_{\rm a}=\Z\Phi^\vee\rtimes W$ on $\Z\Phi^\vee\otimes\R$, where $\Phi$ is the root system of $W$. Therefore, we obtain a $W$-equivariant diffeomorphism
\[\mathbf{T}(W)=\widehat{\Sigma}/Q\simeq\mathcal{E}/Q\simeq(\Z\Phi^\vee\otimes\R)/\Z\Phi^\vee\]
and this last manifold indeed is $W$-diffeomorphic to a maximal torus in the simply-connected Lie group with root system $\Phi$.

In the non-crystallographic case, let $v^*\in \widehat{V}^*$ be a normalized eigenvector for the negative eigenvalue of $\widehat{B}$. Then the subset $\mathcal{H}:=\{\lambda\in \widehat{V}^*~;~\widehat{B}(\lambda,\lambda)=-1,~\widehat{B}(v^*,\lambda)<0\}$, together with the metric induced by the restriction of $\widehat{B}$ is a Riemannian manifold isometric to the hyperbolic space $\mathbb{H}^n$. We have $\mathbf{T}(W)=\widehat{\Sigma}/Q\simeq\mathcal{H}/Q\simeq\mathbb{H}^n/Q$ and since $Q$ preserves the form $\widehat{B}$, the manifold $\mathbf{T}(W)$ naturally inherits a hyperbolic Riemannian metric.
\end{proof}

\begin{rem}\label{alreadyseeninDavis}
After we did this work, we realized that the manifolds $\mathbf{T}(H_3)$ and $\mathbf{T}(H_4)$ have already been discovered in \cite{zimmermann_hyperbolic} and \cite{davis_hyperbolic}. Zimmermann and Davis construct them by taking the orbit under the action of $Q$ of hyperbolic polyhedra. However, our approach has the advantages of being more systematic and to work with any finite Coxeter group. The Zimmermann manifold $\mathbf{T}(H_3)$ has the particularity of being \emph{maximally symmetric} among hyperbolic 3-manifolds with Heegaard genus 11, in the sense of \cite{zimmermann_maximally_symmetric}. On the other hand, the Davis manifold $\mathbf{T}(H_4)$ has a \emph{spin structure} (equivalently, its second Stiefel-Whitney class $w_2$ vanishes) and is, to the knowledge of the author, the only closed 4-manifold for which the intersection form has been explicitly determined, see \cite{ratcliffe-tschantz} and \cite{martelli_hyperbolic}.
\end{rem}

Recall that, as $\widehat{W}$ is infinite, the Coxeter complex is contractible.

\begin{cor}\label{T(W)isBQ}
The covering map
\[Q\longhookrightarrow \widehat{\Sigma}\longtwoheadrightarrow\mathbf{T}(W)\]
is a universal principal $Q$-bundle. In particular, $\mathbf{T}(W)$ is a classifying space for $Q$:
\[\mathbf{T}(W)\simeq B_Q\simeq K(Q,1).\]
\end{cor}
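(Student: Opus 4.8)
The plan is to recognize that essentially all the substantive work has already been carried out in the two preceding results, so that the corollary reduces to standard covering-space theory. First I would observe that, by Lemma \ref{Qactsasitshould}, the discrete group $Q$ acts freely and properly discontinuously on $\widehat{\Sigma}$, and by Theorem \ref{defsteinbergtorus} the quotient map $\widehat{\Sigma}\twoheadrightarrow\mathbf{T}(W)$ is a covering map whose deck transformation group is $Q$. For a discrete group acting freely and properly discontinuously, such a covering is exactly a principal $Q$-bundle: on an evenly covered neighbourhood $U$ one has $p^{-1}(U)\simeq U\times Q$ with $Q$ acting by translation on the second factor, which furnishes the local triviality. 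This identifies $Q\hookrightarrow\widehat{\Sigma}\twoheadrightarrow\mathbf{T}(W)$ as a principal $Q$-bundle.

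Next I would invoke contractibility. As recalled just above the statement, since $\widehat{W}$ is infinite the Coxeter complex $\widehat{\Sigma}$ is contractible, hence in particular weakly contractible. A principal $G$-bundle whose total space is weakly contractible is universal, so $\mathbf{T}(W)$ is a classifying space $B_Q$ for $Q$.

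Finally, for the Eilenberg--MacLane statement I would argue directly with homotopy groups. Being contractible, $\widehat{\Sigma}$ is simply connected, so it is the universal cover of $\mathbf{T}(W)$; the deck group of the universal cover computes the fundamental group, giving $\pi_1(\mathbf{T}(W))\simeq Q$. A covering map induces isomorphisms $\pi_k(\widehat{\Sigma})\stackrel{\sim}{\to}\pi_k(\mathbf{T}(W))$ for all $k\ge 2$, and these vanish since $\widehat{\Sigma}$ is contractible. Therefore $\mathbf{T}(W)$ is aspherical with fundamental group $Q$, i.e. $\mathbf{T}(W)\simeq K(Q,1)$.

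I do not anticipate a serious obstacle here, precisely because the hard geometric input---freeness and proper discontinuity of the $Q$-action, together with the covering property---is already supplied by Lemma \ref{Qactsasitshould} and Theorem \ref{defsteinbergtorus}. The only points requiring a little care are the passage from a free, properly discontinuous covering action of a discrete group to a principal bundle structure, and the citation of the theorem that a (weakly) contractible total space forces universality; both are standard (see e.g. \cite[\S 1.3 and Proposition 1.40]{hatcher} for the covering-space facts).
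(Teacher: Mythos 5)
Your proposal is correct and follows exactly the route the paper intends: the paper gives no explicit proof for this corollary, presenting it as an immediate consequence of the covering-space structure from Theorem \ref{defsteinbergtorus} (via Lemma \ref{Qactsasitshould}) together with the contractibility of $\widehat{\Sigma}$, which is recalled in the sentence immediately preceding the statement. Your write-up simply makes explicit the standard facts (free properly discontinuous action of a discrete group gives a principal bundle; contractible total space forces universality and asphericity of the base) that the paper leaves to the reader.
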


\subsection{Presentation on the fundamental group of $\mathbf{T}(W)$}
\hfill

In this section, we use Poincar\'e's fundamental polyhedron theorem (\cite[Theorem 11.2.2]{ratcliffe_hyperbolic}) to derive a presentation of the group $\pi_1(\mathbf{T}(W))\simeq Q\unlhd\widehat{W}$, in the non-crystallographic case.

The tessellation $\Delta(\widehat{W},\widehat{S})$ of $\widehat{\Sigma}\simeq\mathbb{H}^n$ is \textit{exact} (meaning that each side of the fundamental polyhedron is the side of exactly two polyhedra of the tessellation, see \cite[\S 6.8]{ratcliffe_hyperbolic}) and yields a fundamental polyhedron for $Q$ acting on $\widehat{\Sigma}$. Indeed, choose $v^*\in V^*$ a normalized eigenvector of the Tits form $\widehat{B}$ for its unique negative eigenvalue and consider the subset
\[\mathcal{H}:=\{\lambda\in V^*~;~\widehat{B}(\lambda,\lambda)=-1,~\widehat{B}(v^*,\lambda)<0\}\subset V^*.\]
As already noted in the proof of Theorem \ref{defsteinbergtorus}, the form $\widehat{B}$ induces a Riemannian metric on $\mathcal{H}$ and we have an isometry $\mathcal{H}\simeq\mathbb{H}^n$, where $\mathbb{H}^n$ is the standard hyperbolic $n$-space. The fundamental chamber $C$ is included in the subset $\{\widehat{B}(\lambda,\lambda)<0\}$, hence we can project the punctured Tits cone $X\setminus\{0\}$ on the sheet $\mathcal{H}$ of the hyperbola $\{\widehat{B}(\lambda,\lambda)=-1\}$ and we get $\widehat{\Sigma}\simeq X\cap\mathcal{H}$. Consider the $n$-simplex
\[\Delta_0:=(\overline{C}\setminus\{0\})/\R_+^*\simeq\overline{C}\cap\mathcal{H}\subset\widehat{\Sigma}.\]
This is a fundamental polyhedron for $\widehat{W}$ acting on $\widehat{\Sigma}$.

Recall that we have denoted $H_s:=\{\left<\lambda,\alpha_s\right>=0\}$ for $s\in\widehat{S}$. As the subset $L_0:=\overline{C}\cap\bigcap_{s\ne\widehat{s}_0}H_s$ is a line, its intersection with $\mathcal{H}$ is a vertex of the tessellation $\Delta(\widehat{W},\widehat{S})$ and we may consider its star
\[\Delta:=\mathrm{St}\left(L_0\cap\mathcal{H}\right)\stackrel{\tiny{\text{df}}}=\bigcup_{\substack{\sigma\in F_n(\Delta(\widehat{W},\widehat{S})) \\ L_0\cap\mathcal{H}\subset\sigma}}\sigma=\bigcup_{w\in W}w(\Delta_0).\]
This is the required fundamental polyhedron for $Q$ acting on $\widehat{\Sigma}$.

We will describe generators and relations for $\pi_1(\mathbf{T}(W))$ in terms of side-pairing and cycle relations, as in \cite[\S 6.8]{ratcliffe_hyperbolic}. We see that the facets of $\Delta$ are the $W$-translates of the facet
\[\sigma_0:=H_{\widehat{s}_0}\cap\Delta\in F_{n-1}(\Delta),\]
in other words, $\partial\Delta=\bigcup_{w}w(\sigma_0)$. By \cite[Theorem 6.8.3]{ratcliffe_hyperbolic}, the group $Q=\pi_1(\mathbf{T}(W))$ is generated by the set
\[\Psi:=\{q\in Q~;~\Delta\cap q\Delta\in F_{n-1}(\Delta)\}.\]

\begin{lem}\label{gensforpi1}
The set $\Psi$ of generators of $Q$ is given by the $W$-conjugates of the normal generator of $Q$; i.e., if $q_0:=\widehat{s}_0r_W\in \widehat{W}$ then we have
\[\Psi=\{{}^{w\!}q_0,~w\in W\}=\{wq_0w^{-1},~\overline{w}\in W/C_W(q_0)\}.\]
\end{lem}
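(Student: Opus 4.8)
The plan is to convert the geometric side-pairing condition defining $\Psi$ into pure Coxeter combinatorics, exploiting that $\Delta$ is a fundamental domain for the free $Q$-action on $\widehat{\Sigma}$ and that adjacency of its top-dimensional pieces is governed by the gallery structure of the Coxeter complex. First I would record the fundamental-domain picture. Since $\widehat{W}=Q\rtimes W$ with $W$ finite, the chambers $\{w\Delta_0~;~w\in W\}$ form a transversal for the $Q$-action on the chambers of the tessellation $\Delta(\widehat{W},\widehat{S})$; hence $\Delta=\bigcup_{w\in W}w\Delta_0$ is a fundamental domain for $Q$, and its translates $\{q\Delta~;~q\in Q\}$ tile $\widehat{\Sigma}$, the action being free and properly discontinuous by Lemma \ref{Qactsasitshould}. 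Here $\Delta$ is the star of the vertex $v:=L_0\cap\mathcal{H}$, whose stabiliser in $\widehat{W}$ is exactly the standard parabolic $\widehat{W}_{\widehat{S}\setminus\{\widehat{s}_0\}}\cong W$.

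Next I would identify the boundary walls. By gallery adjacency, $w\Delta_0$ and $w'\Delta_0$ share an $(n-1)$-face if and only if $w'=w\widehat{s}_i$ for some $i$; for $i\ge 1$ the neighbour $w\widehat{s}_i\Delta_0$ again lies in $\Delta$ (an interior wall), whereas for $i=0$ the neighbour $w\widehat{s}_0\Delta_0$ leaves $\Delta$. Thus $\partial\Delta=\bigcup_{w\in W}w\sigma_0$ with $\sigma_0=H_{\widehat{s}_0}\cap\Delta_0$, and the unique chamber on the far side of the wall $w\sigma_0$ is $w\widehat{s}_0\Delta_0$.

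The crux is then a one-line semidirect-product computation. Using $\widehat{r_W}\in W$ with $\widehat{r_W}^2=1$ and $q_0=\widehat{s}_0\widehat{r_W}$, I get $\widehat{s}_0=q_0\widehat{r_W}$, whence
\[w\widehat{s}_0=(wq_0w^{-1})(w\widehat{r_W})={}^wq_0\cdot(w\widehat{r_W})\]
is the $Q\rtimes W$-factorisation, with ${}^wq_0\in Q$ and $w\widehat{r_W}\in W$. Therefore $w\widehat{s}_0\Delta_0\subset {}^wq_0\Delta$, i.e. the tile adjacent to $\Delta$ across $w\sigma_0$ is exactly ${}^wq_0\Delta$, so $\Delta\cap{}^wq_0\Delta$ is a side; conversely every side of $\Delta$ lies on some $w\sigma_0$, so its pairing transformation has this form. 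This yields $\Psi=\{{}^wq_0~;~w\in W\}$. Finally, ${}^wq_0={}^{w'}q_0$ holds if and only if $w^{-1}w'\in C_W(q_0)$, so $w\mapsto {}^wq_0$ descends to a bijection $W/C_W(q_0)\xrightarrow{\sim}\Psi$, giving the second equality.

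The only genuinely geometric point, and the step I expect to be the main obstacle, is the matching of simplicial walls with the \emph{sides} of $\Delta$ viewed as a fundamental polyhedron (so that Poincaré's theorem applies with side-pairing set precisely $\Psi$). When $C_W(q_0)\ne 1$ the $|C_W(q_0)|$ walls $wc\sigma_0$, $c\in C_W(q_0)$, are all paired by the same element ${}^wq_0$ and must fuse into a single side; verifying this amounts to checking that $C_W(q_0)$ stabilises the wall $H_{\widehat{s}_0}$, so that these walls are coplanar. Everything else is the bookkeeping in the semidirect decomposition above.
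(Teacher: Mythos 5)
Your proof is correct, and it is organized genuinely differently from the paper's. The paper argues in reverse: it takes an unknown $q\in\Psi$, expands the side $w(\sigma_0)=\Delta\cap q\Delta$ as $\bigcup_{u,v\in W}u\left(\Delta_0\cap u^{-1}qv(\Delta_0)\right)$, uses the fact that each $\widehat{W}$-orbit meets $\Delta_0$ exactly once to force $u=w$ and $\Delta_0\cap u^{-1}qv(\Delta_0)=\sigma_0$, hence $u^{-1}qv=\widehat{s}_0$, and only then applies $\pi$ to obtain $v=ur_W$, i.e. $q={}^uq_0$; this is exactly your identity $w\widehat{s}_0={}^wq_0\cdot(w\widehat{r_W})$ run backwards. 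Your forward computation buys two things the paper leaves implicit: it yields the inclusion $\{{}^wq_0,~w\in W\}\subseteq\Psi$ as well (the paper's proof only establishes $\Psi\subseteq\{{}^wq_0,~w\in W\}$), and it names the pairing wall by wall, which is what is actually used afterwards for the cycle relations of Theorem \ref{presentationQ}. Conversely, the paper's argument needs less preparation: it only invokes the fundamental-domain property of $\overline{C}$ for $\widehat{W}$, not that the $Q$-translates of $\Delta$ tile $\widehat{\Sigma}$ with disjoint interiors. Finally, the obstacle you flag at the end is genuine and not optional: if one read $F_{n-1}(\Delta)$ as \emph{simplicial} facets, the statement would be false whenever $C_W(q_0)\ne 1$, since then $\Delta\cap{}^wq_0\Delta$ contains the $|C_W(q_0)|$ distinct simplicial walls $wc\left(H_{\widehat{s}_0}\cap\Delta_0\right)$, $c\in C_W(q_0)$. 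The paper sidesteps this by defining $\sigma_0:=H_{\widehat{s}_0}\cap\Delta$ (a polyhedral side, subdivided into $|C_W(q_0)|$ simplices), although its proof then manipulates $\sigma_0$ as if it were a single simplex; your proposed verification that $C_W(q_0)$ stabilizes the wall $H_{\widehat{s}_0}$ is precisely the nontrivial inclusion $C_W(q_0)\subseteq C_W(\widehat{s}_0)$ of Lemma \ref{centralizer}, whose proof is purely combinatorial and independent of the present lemma, so you may invoke it here without circularity.
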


\begin{proof}
Let $q\in Q\setminus\{1\}$ such that $\Delta\cap q\Delta$ is a facet $w(\sigma_0)$ of $\Delta$, for some $w\in W$. We have
\[w(\sigma_0)=\Delta\cap q\Delta=\bigcup_{u,v\in W}u(\Delta_0)\cap qv(\Delta_0)=\bigcup_{u,v\in W}u(\Delta_0\cap u^{-1}qv(\Delta_0)).\]
Since any term of the last union is (empty or) a closed simplex, this means that one of them has to be the whole of $w(\sigma_0)$, so we can find $u,v\in W$ such that
\[u^{-1}w(\sigma_0)=\Delta_0\cap u^{-1}qv(\Delta_0).\]
In particular, we have $u^{-1}w(\sigma_0)\subset \Delta_0$ and since any $\widehat{W}$-orbit meets $\Delta_0$ in only one point, this implies that $u^{-1}w(\sigma_0)=\sigma_0$ and so $u^{-1}w\in \widehat{W}_{\sigma_0}=\left<\widehat{s}_0\right>$ but as $u^{-1}w\in W$, this is possible only when $u=w$. Hence we get
\[\sigma_0=\Delta_0\cap u^{-1}qv(\Delta_0).\]
This implies in turn that $u^{-1}qv\in \left<\widehat{s}_0\right>$ and since $q\ne1$, we must have $u^{-1}qv=\widehat{s}_0$, i.e. $q=u\widehat{s}_0v^{-1}$. Finally, because $q\in Q$, applying the projection $\pi : \widehat{W}\twoheadrightarrow W$ to this equality yields $1=ur_Wv^{-1}$, so $v=ur_W$ and $q=u\widehat{s}_0v^{-1}=uq_0u^{-1}$.
\end{proof}

Now, we need a technical lemma on the centralizer of $q_0$.

\begin{lem}\label{centralizer}
The centralizer of $q_0=\widehat{s}_0r_W$ in $W$ is given by
\[C_W(q_0)=C_W(\widehat{s}_0)=\left<s\in S~;~s\widehat{s}_0=\widehat{s}_0s\right>.\]
In particular, this is (standard) parabolic.
\end{lem}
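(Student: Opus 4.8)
The statement really packages two essentially independent claims: the equality of the two centralizers $C_W(q_0)=C_W(\widehat{s}_0)$, and the identification of $C_W(\widehat{s}_0)$ with the standard parabolic $\langle s\in S~;~s\widehat{s}_0=\widehat{s}_0s\rangle$. The crystallographic case is classical (there $q_0=\mathrm{t}_{\alpha_0^\vee}$ and $\widehat{s}_0=\mathrm{t}_{\alpha_0^\vee}s_{\alpha_0}$, so both centralizers collapse to $\mathrm{Stab}_W(\alpha_0)$, which is the parabolic generated by the nodes of the extended diagram not joined to $0$), so the plan is to treat the compact hyperbolic case, where $\widehat{B}$ is non-degenerate and Lorentzian.

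For $C_W(q_0)=C_W(\widehat{s}_0)$, one inclusion is purely formal. Writing $\widehat{s}_0=q_0r_W$ in the semidirect product $\widehat{W}=Q\rtimes W$ (with $q_0\in Q$ and $r_W\in W$), any $w\in C_W(\widehat{s}_0)$ satisfies $(wq_0w^{-1})(wr_Ww^{-1})=q_0r_W$, and uniqueness of the normal form forces $wq_0w^{-1}=q_0$; hence $C_W(\widehat{s}_0)\subseteq C_W(q_0)$. The reverse inclusion is the heart of the matter, and I would establish it in the geometric representation $\widehat{\sigma}$. Write $q_0=s_\beta s_\gamma$, where $\beta=\alpha_{\widehat{s}_0}$ is the simple root of $\widehat{s}_0$ and $\gamma$ is the root of $r_W$. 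Since $Q$ is torsion-free (Corollary \ref{Qistorsionfree}) and $q_0\ne 1$, the element $q_0$ has infinite order, so $\langle s_\beta,s_\gamma\rangle\simeq D_\infty$ and the plane $P=\mathrm{span}(\beta,\gamma)$ is $\widehat{B}$-non-degenerate of signature $(1,1)$. Thus $q_0$ fixes the positive definite complement $P^\perp$ pointwise and acts on $P$ as a loxodromic boost with two distinct real eigenvalues $\lambda\ne\lambda^{-1}$ (both $\ne 1$) along its two isotropic eigenlines $\ell_\pm\subset P$.

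Now given $w\in C_W(q_0)$, the isometry $\widehat{\sigma}(w)$ commutes with this boost, hence preserves each eigenspace $P^\perp,\ell_+,\ell_-$; the eigenvalues being distinct, $\ell_+$ and $\ell_-$ cannot be interchanged, so $\widehat{\sigma}(w)$ preserves $P$. Being of finite order and fixing the two distinct lines $\ell_\pm$, it is diagonalizable on $P$ with eigenvalues $\pm1$, and the requirement that it be a $\widehat{B}|_P$-isometry (which pairs $\ell_+$ with $\ell_-$ nondegenerately) forces the product of these eigenvalues to be $1$, so $\widehat{\sigma}(w)|_P=\pm\,\mathrm{id}$. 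In either case $w\beta=\pm\beta$, whence $ws_\beta w^{-1}=s_\beta$, i.e. $w\in C_W(\widehat{s}_0)$, closing the equality.

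For the parabolic identification I would first note that for $w\in W=\widehat{W}_{\widehat{S}\setminus\{\widehat{s}_0\}}$ the coefficient of $\beta=\alpha_{\widehat{s}_0}$ in $w\beta$ is always $1$, so $w\beta=-\beta$ is impossible and therefore $C_W(\widehat{s}_0)=\{w~;~w\beta=\pm\beta\}=\mathrm{Stab}_W(\beta)$. To compute this stabilizer I would use non-degeneracy of $\widehat{B}$ to replace $\beta$ by the form $\widehat{B}(\beta,-)\in\widehat{V}^*$; since $\langle\widehat{B}(\beta,-),\alpha_t\rangle=-\cos(\pi/\widehat{m}_{0,t})\le 0$ for every $t\ne\widehat{s}_0$, the vector $-\widehat{B}(\beta,-)$ lies in the closed fundamental chamber of the finite group $W$, and the classical description of stabilizers of points of that chamber yields $\mathrm{Stab}_W(\beta)=\langle s_t~;~\widehat{B}(\beta,\alpha_t)=0\rangle=\langle s\in S~;~s\widehat{s}_0=\widehat{s}_0s\rangle$, which is standard parabolic by construction. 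The main obstacle throughout is exactly the inclusion $C_W(q_0)\subseteq C_W(\widehat{s}_0)$, i.e. upgrading ``commutes with the glide $q_0$'' to ``commutes with the reflection $\widehat{s}_0$''; everything there hinges on the hyperbolicity of $\widehat{W}$ — the torsion-freeness of $Q$ and the non-degeneracy of $\widehat{B}$ — which is precisely what makes $q_0$ loxodromic with a well-defined invariant plane and a pair of distinguished eigenlines.
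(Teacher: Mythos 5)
Your argument has one genuine gap, and it sits exactly at the step you yourself identify as the heart of the matter. You claim that since $Q$ is torsion-free and $q_0\ne 1$, the element $q_0=s_\beta s_\gamma$ has infinite order, \emph{so} the plane $P=\mathrm{span}(\beta,\gamma)$ is non-degenerate of signature $(1,1)$. That inference is false as stated: infinite order of $s_\beta s_\gamma$ is equivalent to $|\widehat{B}(\beta,\gamma)|\ge 1$, and the borderline case $|\widehat{B}(\beta,\gamma)|=1$ also gives $\langle s_\beta,s_\gamma\rangle\simeq D_\infty$ (an affine reflection subgroup of type $\widetilde{A}_1$), but with $\widehat{B}|_P$ degenerate positive semidefinite. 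In that case $q_0$ is a \emph{parabolic} isometry of $\mathbb{H}^n$: it acts unipotently on $P$, fixing a single isotropic line (namely $\R(\beta+\gamma)$ when $\widehat{B}(\beta,\gamma)=-1$) rather than having two distinct eigenlines, and your entire eigenspace argument (distinct eigenvalues, $w$ preserves $\ell_\pm$, hence $w|_P=\pm\mathrm{id}$) collapses. Torsion-freeness of $Q$ only rules out the elliptic alternative; it says nothing about parabolicity. To close the gap you need the additional input that $\widehat{W}$ contains no parabolic elements, and this is precisely where \emph{compact} hyperbolicity (not mere hyperbolicity, and not torsion-freeness) enters: $\widehat{W}$ acts properly and cocompactly on $\mathbb{H}^n$ (its fundamental simplex $\overline{C}\cap\mathcal{H}$ is compact because every proper parabolic subgroup is finite), and every element of a discrete group acting cocompactly by isometries on $\mathbb{H}^n$ is semisimple, i.e.\ elliptic or loxodromic — a standard fact about cocompact isometry groups. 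Hence infinite order forces $q_0$ loxodromic and $|\widehat{B}(\beta,\gamma)|>1$. Alternatively one can verify $|\widehat{B}(\beta,\gamma)|>1$ case by case for $\widehat{I_2(m)}$, $\widehat{H_3}$, $\widehat{H_4}$. One of these supplements must be supplied; without it the step fails.

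With that repair your proof is correct, and it takes a genuinely different route from the paper's, which is purely combinatorial: the paper proves $C_W(q_0)\subseteq C_W(\widehat{s}_0)$ by a length/exchange-condition argument, obtains the parabolic description of $C_W(\widehat{s}_0)$ by tracking braid moves on reduced expressions, and disposes of the remaining inclusion $C_W(\widehat{s}_0)\subseteq C_W(q_0)$ by a case-by-case check. Two of your steps are in fact cleaner and more uniform than the paper's. First, the inclusion $C_W(\widehat{s}_0)\subseteq C_W(q_0)$ via uniqueness of the normal form in $\widehat{W}=Q\rtimes W$ replaces the paper's case-by-case verification with two lines valid for every $W$. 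Second, the identification $C_W(\widehat{s}_0)=\mathrm{Stab}_W(\beta)=\left<s\in S~;~s\widehat{s}_0=\widehat{s}_0s\right>$ — using that the $\beta$-coefficient of $w\beta$ equals $1$ for all $w\in W$ (so $w\beta=-\beta$ is impossible), the $W$-invariant orthogonal splitting $\widehat{V}=V_W\perp V_W^{\perp}$ (valid since $\widehat{B}|_{V_W}$ is positive definite and $W$ acts trivially on $V_W^{\perp}$), and the standard description of stabilizers of points in the closed fundamental chamber of the finite group $W$ — is sound and replaces the most delicate combinatorial portion of the paper's proof. The trade-off is clear: the paper's argument uses nothing beyond Coxeter combinatorics, whereas yours buys geometric transparency at the price of the cocompactness input above, which is irreducible in your approach.
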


\begin{proof}
First, we borrow an argument due to Sebastian Schoennenbeck\footnote{\tiny{\url{https://mathoverflow.net/questions/200433/centralizers-of-reflections-in-special-subgroups-of-coxeter-groups}}} to prove the second equality above. Let $w=s_{i_1}\cdots s_{i_r}$ be a reduced expression of an element $w\in C_W(\widehat{s}_0)$. To show that $w$ is in the parabolic subgroup of the statement, since the elements of $C_W(\widehat{s}_0)$ of length 1 are the simple reflections of $W$ commuting with $\widehat{s}_0$, by induction it is enough to show that $\widehat{s}_0s_{i_r}=s_{i_r}\widehat{s}_0$. We have $\ell(w\widehat{s}_0)=\ell(w)+1$ and $\ell(w\widehat{s}_0w^{-1})=\ell(\widehat{s}_0)=1$, so $\ell(w\widehat{s}_0s_{i_r})=\ell(w\widehat{s}_0w^{-1}ws_{i_r})\le1+\ell(ws_{i_r})=\ell(w)$ and thus $\ell(w\widehat{s}_0s_{i_r})=\ell(w)$. Thus, by the exchange condition, there is a reduced expression $w\widehat{s}_0=s_{j_1}\cdots s_{j_r}s_{i_r}$ for $w\widehat{s}_0$ and since $s_{i_1}\cdots s_{i_r}\widehat{s}_0$ is already a reduced expression, by Matsumoto's lemma, there is a finite series of braid-moves from the second to the first. The expression $s_{i_1}\cdots s_{i_r}\widehat{s}_0$ satisfies the property
\begin{equation}\label{redexp}\tag{$\ast$}
\begin{array}{ll}
\text{The expression contains only one occurrence of $\widehat{s}_0$ and there is no simple reflection} \\ 
\text{appearing on the right of $\widehat{s}_0$ that does not commute with it}.\end{array}
\end{equation}
Consider a braid relation $sts\cdots=tst\cdots$ connecting the two expressions of $w\widehat{s}_0$, with $m$ factors on each side and suppose that we apply it to a reduced expression of $w\widehat{s}_0$ verifying (\ref{redexp}). If $s,t\ne\widehat{s}_0$, then the resulting expression still satisfies (\ref{redexp}). Now, if $s=\widehat{s}_0$ say, then $t$ has to commute with $\widehat{s}_0$. Indeed, if not, then the left-hand side of the braid relation contains at least two occurrences of $\widehat{s}_0$ (one on each side of $t$) and, in the right-hand side there is at least one occurrence of $t$ on the right of $\widehat{s}_0$, but none of these occur in the considered reduced expression. Therefore, the reduced expression resulting from the application of the braid move still verifies (\ref{redexp}). In particular, the expression $s_{j_1}\cdots s_{j_r}s_{i_r}$ satisfies (\ref{redexp}) and thus, every simple reflection appearing on the right of $\widehat{s}_0$ must commute with it. In particular, this is the case of $s_{i_r}$, as required.

We now prove that $C_W(q_0)=C_W(\widehat{s}_0)$. Let $w=s_{i_1}\cdots s_{i_r}$ be a reduced expression of an element $w\in C_W(q_0)$. Since $wq_0=q_0w$, we get $\widehat{s}_0w\widehat{s}_0=r_Wwr_W\in W$. Let $\widehat{s}_0w\widehat{s}_0=s_{j_1}\cdots s_{j_k}$ be a reduced expression in $W$. Since $\ell(w\widehat{s}_0)=\ell(w)+1=\ell(\widehat{s}_0w)$, we have $\ell(\widehat{s}_0w\widehat{s}_0)\in\{\ell(w),\ell(w)+2\}$. But taking length in the equality $\widehat{s}_0s_{i_1}\cdots s_{i_r}=s_{j_1}\cdots s_{j_k}\widehat{s}_0$ gives $k=r$, that is $\ell(\widehat{s}_0w\widehat{s}_0)=\ell(w)$. In particular, $\ell(\widehat{s}_0w\widehat{s}_0)<\ell(w\widehat{s}_0)$ and by the exchange condition, there is a reduced expression $\widehat{s}_0w\widehat{s}_0=s_{i_1}\cdots \widehat{s_{i_l}}\cdots s_{i_r}\widehat{s}_0$ (the reflection $s_{i_l}$ is omitted) and since this last expression is in $W$, we must have $s_{i_l}=\widehat{s}_0$, thus $\widehat{s}_0w\widehat{s}_0=s_{i_1}\cdots s_{i_r}=w$ and $w\in C_W(\widehat{s}_0)$. The reverse inclusion can be directly checked case by case using the parabolic description of $C_W(\widehat{s}_0)$.
\end{proof}

\begin{rem}
From the diagrams of the hyperbolic extensions we get therefore
\[C_{I_2(2g+1)}(q_0)=1,~C_{I_2(4g+2)}(q_0)=\left<s_2\right>,~C_{I_2(4g)}(q_0)=\left<s_2\right>,~C_{H_3}(q_0)=\left<s_2\right>,~C_{H_4}(q_0)=\left<s_1,s_2,s_3\right>\simeq H_3.\]
\end{rem}

\begin{theo}\label{presentationQ}
Let $W$ be non-crystallographic and $U:=\{w\in W~;~\ell(ws)>\ell(w),~\forall s\in S~;~s\widehat{s_0}=\widehat{s}_0s\}\approx W/C_W(q_0)$ be the set of minimal length coset representatives modulo the parabolic subgroup $C_W(q_0)$ of $W$. The transitive action of $W$ on $W/C_W(q_0)$ induces an action of $W$ on $U$ and for $(w,u)\in W\times U$, we write $w(u)$ for the image of $u$ under the action of $w$. Then the fundamental group $\pi_1(\mathbf{T}(W))\simeq Q$ admits the following presentation
\[\pi_1(\mathbf{T}(W))=\left<q_u,~u\in U~|~R_{\text{side}}\cup R_{\text{cycle}}\right>,\]
where 
\[R_{\text{side}}=\{q_uq_v,~u,v\in U~;~u^{-1}vr_W\in C_W(\widehat{s}_0)\}\]
and
\[\begin{array}{c}
R_{\text{cycle}}=\bigg\{q_{w(u_1)}q_{w(u_2)}\cdots q_{w(u_r)},~w\in W,~u_0,u_1,\dotsc,u_r,u_{r+1}\in U~\text{such that}~u_0=u_{r+1}=1\\
\text{and, for}~i>0,~\left.\left<\widehat{s}_0,\widehat{s}_0^{u_{i+1}^{-1}u_ir_W}\right>~\text{and}~\left<\widehat{s}_0,\widehat{s}_0^{r_Wu_{i-1}^{-1}u_i}\right>~\text{are conjugate under}~C_W(\widehat{s}_0)\right\}.\end{array}\]
\end{theo}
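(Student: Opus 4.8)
The plan is to apply Poincaré's fundamental polyhedron theorem (\cite[Theorem 11.2.2]{ratcliffe_hyperbolic}, in the form \cite[Theorem 6.8.3]{ratcliffe_hyperbolic}) to the compact convex polytope $\Delta=\mathrm{St}(L_0\cap\mathcal{H})=\bigcup_{w\in W}w(\Delta_0)$, which is an exact fundamental polyhedron for the action of $Q$ on $\widehat\Sigma\simeq\mathbb{H}^n$. The hypotheses are already in place: by Theorem \ref{defsteinbergtorus} the quotient $\mathbf{T}(W)=\widehat\Sigma/Q$ is a \emph{smooth} closed hyperbolic manifold, so the $Q$-action is free and properly discontinuous and $\Delta$ has finitely many sides (as $W$ is finite). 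Crucially, because $\mathbf{T}(W)$ is a genuine manifold with no cone points, every ridge-cycle dihedral-angle sum equals exactly $2\pi$, so each cycle relation will simply assert that a product of side-pairing transformations is trivial, with no proper powers occurring.

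First I would pin down the sides and the side-pairing. The walls bounding $\Delta$ are the hyperplanes $wH_{\widehat{s}_0}$, and since an element $c\in W$ preserves $H_{\widehat{s}_0}$ if and only if $c\in C_W(\widehat{s}_0)$, these walls, hence the sides of $\Delta$, are indexed by $U\approx W/C_W(\widehat{s}_0)=W/C_W(q_0)$ (using Lemma \ref{centralizer}); explicitly $S_u:=uH_{\widehat{s}_0}\cap\Delta=\bigcup_{c\in C_W(q_0)}uc(\sigma_0)$. The simplex $w(\Delta_0)$ has exactly one facet, namely $w(\sigma_0)$, across $wH_{\widehat{s}_0}$, and the neighbouring chamber there is $w\widehat{s}_0(\Delta_0)={}^wq_0\cdot(w\widehat{r_W})(\Delta_0)\subset {}^wq_0\,\Delta$; thus the deck transformation carrying $\Delta$ to the copy adjacent across $S_u$ is precisely the generator $q_u={}^uq_0$ furnished by Lemma \ref{gensforpi1}, and $q_u^{-1}(S_u)=S_{[ur_W]}$, so $S_u$ is paired with $S_{[ur_W]}$. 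This produces the generators $\{q_u\}_{u\in U}$ at once, and the Poincaré side-pairing relations $g_{S^*}g_S=1$ become $q_uq_v=1$ whenever $v\in ur_WC_W(q_0)$. Since $r_Wq_0r_W=q_0^{-1}$, the reflection $r_W$ normalises $C_W(q_0)$, whence $r_WC_W(q_0)=C_W(q_0)r_W$ and this condition is exactly $u^{-1}vr_W\in C_W(\widehat{s}_0)$, defining $R_{\mathrm{side}}$.

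The substance of the proof is the cycle relations. A ridge of $\Delta$ lies on $uH_{\widehat{s}_0}\cap vH_{\widehat{s}_0}$ for two distinct sides $S_u,S_v$, and the picture transverse to it is governed by the finite dihedral group generated by the two reflections $\widehat{s}_0$ and $\widehat{s}_0^{\,u^{-1}v}$; two sides share a ridge precisely when these reflections generate the appropriate rank-two parabolic, which, after translating by $u$ and accounting for the freedom in the coset representatives, is the stated condition that $\langle\widehat{s}_0,\widehat{s}_0^{\,u_{i+1}^{-1}u_ir_W}\rangle$ and $\langle\widehat{s}_0,\widehat{s}_0^{\,r_Wu_{i-1}^{-1}u_i}\rangle$ be conjugate under $C_W(\widehat{s}_0)$. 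I would then run Poincaré's ridge-cycle algorithm: starting from a ridge $R_0$ contained in $S_{w(u_0)}$ with $u_0=1$, successive side-pairings produce a finite cyclic sequence of sides $S_{w(u_0)},S_{w(u_1)},\dots,S_{w(u_{r+1})}$ with $u_{r+1}=1$ returning to $R_0$, the admissibility of each step being the above conjugacy condition, while the outer $w\in W$ records the $W$-orbit of the ridge. Because the angle sum is $2\pi$, the resulting cycle transformation $q_{w(u_0)}q_{w(u_2)}\cdots q_{w(u_r)}$ equals $1$, yielding $R_{\mathrm{cycle}}$.

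The hardest part will be exactly this last translation: showing that the transverse two-dimensional link of each ridge is faithfully encoded by the coset sequence $u_0,\dots,u_{r+1}$ together with the dihedral-conjugacy condition, that the induced products of the $q_u$ are literally the words written above, and, for completeness, that every ridge cycle arises in this way so that no relation is omitted. The bookkeeping of minimal-length coset representatives under the $W$-action on $U$ (so that each $w(u_i)$ is again a representative) and the careful identification of which conjugate wall bounds the transverse slice on each side is where all the care is needed; it is the manifold property of $\mathbf{T}(W)$ from Theorem \ref{defsteinbergtorus} that guarantees these cycles close up with trivial holonomy rather than to a finite rotation, so that the cycle relations take the clean form asserted.
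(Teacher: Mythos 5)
Your plan follows the paper's own proof essentially step for step: both apply Poincar\'e's fundamental polyhedron theorem (through Ratcliffe's Theorems 6.8.3, 6.8.7 and 11.2.2) to the star $\Delta=\bigcup_{w\in W}w(\Delta_0)$, take the generators $q_u={}^uq_0$ indexed by $U$ from Lemma \ref{gensforpi1}, translate the side-pairings into $R_{\text{side}}$ exactly as you do (including the observation that $r_W$ inverts $q_0$), and convert the ridge-cycle procedure into $R_{\text{cycle}}$ by identifying intersections of translated facets with standard parabolic stabilizers of the tessellation $\Delta(\widehat{W},\widehat{S})$ --- which is precisely your ``transverse link'' bookkeeping, and where the paper likewise notes that different choices of coset representatives produce $C_W(\widehat{s}_0)$-conjugate stabilizers. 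The only cosmetic differences are that you work with whole sides $S_u=uH_{\widehat{s}_0}\cap\Delta$ while the paper works with individual simplicial facets $u(\sigma_0)$, and that you justify triviality of the cycle transformations via the $2\pi$ angle-sum (no cone points) argument where the paper gets it directly from Ratcliffe's Theorem 6.8.7 for the free action of the torsion-free group $Q$; neither affects the substance.
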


\begin{proof}
Drop the presentation notation and, for $u\in U$, denote $q_u:={}^{u\!}q_0=uq_0u^{-1}$, $\sigma_u:=u(\sigma_0)=\Delta\cap q_u(\Delta)$  and $\sigma_v:=q_u^{-1}(\sigma_u)=uq_0^{-1}(\sigma_0)$. To say that for some $u,v\in U$ we have $q_uq_{v}=1$ amounts to say that ${}^{u\!}q_0={}^{v\!}q_0^{-1}={}^{vr_W\!}q_0$, i.e. $u^{-1}vr_W\in C_W(q_0)$.

For the cycle relations, we follow the method detailed in \cite[\S 6.8]{ratcliffe_hyperbolic}. First notice that each facet of $\sigma_0$ is of the form $\sigma_u$ for some $u\in U$ (see \cite[Theorem 6.7.5]{ratcliffe_hyperbolic}). Choose $\sigma\in F_{n-1}(\Delta)$ and $\tau\in F_{n-2}(\sigma)\subset F_{n-2}(\Delta)$. Recursively define a sequence of facets $\{\sigma_{u_j}\}_{j\in\N^*}$ as follows
\begin{enumerate}[label=$\bullet$]
\item let $\sigma_{u_1}:=\sigma$,
\item let $\sigma_{u_2}$ be the facet of $\sigma$ adjacent to $\sigma_{u_1}':=q_{u_1}^{-1}(\sigma)$ such that $q_{u_1}(\sigma_{u_1}'\cap\sigma_{u_2})=\tau$,
\item for $i>1$, let $\sigma_{u_{i+1}}\in F_{n-2}(\sigma)$ be the facet adjacent to $\sigma_{u_i}':=q_{u_i}^{-1}(\sigma_{u_i})$ such that $q_{u_i}(\sigma_{u_i}'\cap\sigma_{u_{i+1}})=\sigma_{u_{i-1}}'\cap\sigma_{u_i}$.
\end{enumerate}
By \cite[Theorem 6.8.7]{ratcliffe_hyperbolic}, there exists a least integer $k\in\N^*$ such that $\sigma_{u_{i+k}}=\sigma_{u_i}$ for all $i$ and we have $q_{u_1}\cdots q_{u_k}=1$. Moreover, by the Poincar\'e theorem \cite[Theorem 11.2.2]{ratcliffe_hyperbolic}, the set of all such relations (for $\sigma\in F_{n-1}(\Delta)$ and $\tau\in F_{n-2}(\sigma)$), together with the side-pairing relations described above, form a complete set of relations for $Q$.

Choose $\sigma\in F_{n-1}(\Delta)\subset W\cdot \sigma_0$ and $\tau\in F_{n-2}(\sigma)\subset F_{n-2}(\Delta)$ and let $\{\sigma_{u_j}\}_{j\in\N^*}$ denote the associated cycle of sides, with period $\ell$, say. We have the relation $q_{u_1}\cdots q_{u_\ell}=1$. Up to conjugation by an element of $W$, we may assume that $\sigma=\sigma_0$ and so $q_{u_1}=q_0$. Let $i>1$ be such that we have some relation
\[q_{u_i}(\sigma_{u_i}'\cap\sigma_{u_{i+1}})=\sigma_{u_{i-1}}'\cap\sigma_{u_i}\ne\emptyset.\]
We write
\begin{align*}
q_{u_i}(\sigma_{u_i}'\cap\sigma_{u_{i+1}})=\sigma_{u_i}\cap\sigma_{u_{i-1}}'&~\Longleftrightarrow~\sigma_{u_i}\cap q_{u_i}(\sigma_{u_{i+1}})=\sigma_{u_i}\cap q_{u_{i-1}}^{-1}(\sigma_{u_{i-1}}) \\
&~\Longleftrightarrow~u_i(\sigma_0)\cap q_{u_i}u_{i+1}(\sigma_0)=u_i(\sigma_0)\cap q_{u_{i-1}}^{-1}u_{i-1}(\sigma_0) \\
&~\Longleftrightarrow~u_i(\sigma_0\cap u_i^{-1}q_{u_i}u_{i+1}(\sigma_0))=u_i(\sigma_0\cap u_i^{-1}u_{i-1}q_0^{-1}(\sigma_0)) \\
&~\Longleftrightarrow~\sigma_0\cap q_0u_i^{-1}u_{i+1}(\sigma_0)=\sigma_0\cap u_i^{-1}u_{i-1}r_W(\sigma_0),
\end{align*}
and the two sides of the last equality are simplices of the tessellation $\Delta(\widehat{W},\widehat{S})$, whose face lattice is the lattice of standard parabolic subgroups of $\widehat{W}$, so these two coincide if and only if their stabilizers in $\widehat{W}$ are equal. Though this condition depends on the choice of elements of $U$, it is easy to check that different choices give conjugate stabilizers in $C_W(q_0)$.
\end{proof}

\begin{cor}\label{abelianizationofQ}
The group $\pi_1(\mathbf{T}(H_3))$ (resp. $\mathbf{T}(H_4)$) admits a presentation with 11 (resp. 24) generators , all of whose relations are products of commutators. In particular, we have
\[H_1(\mathbf{T}(H_3),\Z)=\pi_1(\mathbf{T}(H_3))^\mathrm{ab}\simeq\Z^{11}~~\text{and}~~H_1(\mathbf{T}(H_4),\Z)=\pi_1(\mathbf{T}(H_4))^\mathrm{ab}\simeq\Z^{24}.\]
\end{cor}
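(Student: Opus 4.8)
The plan is to read off the presentation of $Q \simeq \pi_1(\mathbf{T}(W))$ furnished by Theorem \ref{presentationQ} for $W=H_3$ and $W=H_4$, and then simplify it by Tietze transformations until every relation visibly lies in the commutator subgroup of the free group on the generators. First I would record the index set: using the centralizers $C_{H_3}(q_0)=\left<s_2\right>$ and $C_{H_4}(q_0)\simeq H_3$ computed in the Remark following Lemma \ref{centralizer}, the generating set $\{q_u~;~u\in U\}$ has cardinality $|U|=[W:C_W(q_0)]$, namely $60$ for $H_3$ and $120$ for $H_4$.

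Next I would exploit the side-pairing relations $R_{\text{side}}=\{q_uq_v~;~u^{-1}vr_W\in C_W(\widehat{s}_0)\}$ to eliminate generators. For each $u$ there is a unique partner $v\in U$, and since $q_0=\widehat{s}_0r_W$ has infinite order in the torsion-free group $Q$ (Corollary \ref{Qistorsionfree}), one has $v\neq u$; hence $u\mapsto v$ is a fixed-point-free involution of $U$ and each relation $q_uq_v=1$ permits the substitution $q_v=q_u^{-1}$. This halves the number of generators to $30$ (resp. $60$) and leaves only the cycle relations $R_{\text{cycle}}$, each of which is an honest word $q_{u_1}\cdots q_{u_k}$.

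I would then feed the resulting finite presentation into the Tietze-simplification routines of the accompanying GAP package, reducing it to a presentation $\left<x_1,\dotsc,x_g~|~\rho_1,\dotsc,\rho_m\right>$ with $g=11$ for $H_3$ and $g=24$ for $H_4$. The decisive step is then purely mechanical: compute, for every relator $\rho_i$, its total exponent in each generator $x_j$, and check that all of these vanish, so that each $\rho_i$ belongs to the commutator subgroup of $F(x_1,\dotsc,x_g)$ and is thus a product of commutators. Granting this, abelianizing the presentation annihilates every relation and yields $H_1(\mathbf{T}(W),\Z)=Q^{\mathrm{ab}}=\Z^{g}$, which is exactly the assertion (the surviving generators then necessarily map to a basis of $Q^{\mathrm{ab}}$).

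The main obstacle is that reaching precisely $11$ (resp. $24$) generators with all relators products of commutators cannot reasonably be done by hand, since the cycle relations are numerous and long; the argument therefore rests on the computer-algebra computation, and the honest content of the proof is the finite, checkable verification that each simplified relator abelianizes to zero. What makes this stronger conclusion available at all — rather than merely the first Betti number via a Smith normal form — is that the computation returns a presentation of this special \emph{commutator form}, equivalently that $H_1$ is free abelian: a group whose first homology carried torsion, or whose minimal number of generators exceeded its first Betti number, could never admit such a presentation. I would accordingly exhibit the two simplified presentations explicitly (they are short once obtained) and leave the exponent-sum check as the final verification.
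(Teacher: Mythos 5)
Your proposal is correct and follows essentially the same route as the paper's proof: the paper likewise starts from Theorem \ref{presentationQ}, uses the side-pairing relations to cut the generating set in half (to 30 generators for $H_3$, 60 for $H_4$), then eliminates superfluous generators until 11 (resp.\ 24) remain and checks that every surviving relation abelianizes to the identity. The only difference is presentational: where you would invoke GAP's Tietze-simplification routines and an exponent-sum check, the paper records the explicit (computer-assisted) substitutions and the simplified relator lists in Appendix \ref{AB}.
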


\noindent\textit{Sketch of proof.}
For $H_3$, beside the side-pairing relations (which we can immediately simplify by removing half of the $[H_3:C_{H_3}(q_0)]=[H_3:\left<s_2\right>]=60$ generators), we find only one primitive cycle relation (\emph{primitive} meaning starting by $q_0$) of length 3 and one of length 5. Taking the $H_3$-conjugates of these gives 120 relations of length 3 and 120 relations of length 5. But some of these $H_3$-conjugate relations are inverses of other relations, so we can simplify them. We can also remove any cyclic permutation of these relations, which finally yields a presentation for $\pi_1(\mathbf{T}(H_3))$ with 30 generators, 20 relations of length 3 and 12 relations of length 5. Using the relations, we can check that some of the generators are superfluous and that the simplified presentation has the stated number of generators (all among the original generators) and that the relations become trivial, once abelianized.

For $H_4$, there is only one primitive cycle relation of length 5, which gives a presentation for $\pi_1(\mathbf{T}(H_4))$ with $\tfrac{1}{2}[H_4:C_{H_3}(q_0)]=60$ generators and 144 relations of length 5.
\qed

\begin{rem}
The presentation of $\pi_1(\mathbf{T}(H_3))$ with $30$ generators and $32$ relations we found in the sketch of proof above, and the one of $\pi_1(\mathbf{T}(H_4))$ with $60$ generators and $144$ relations, are precisely (up to relabelling) the presentations given in \cite{zimmermann_hyperbolic} and \cite{ratcliffe-tschantz}.
\end{rem}

\subsection{The manifolds $\mathbf{T}(I_2(m))$ as Riemann surfaces}
\hfill

A little bit more can be said about the case of the surfaces $\mathbf{T}(I_2(m))$. Recall that by a theorem of Gauss (see \cite[Theorem 3.11.1]{jost-riemann}), any Riemannian metric on an oriented 2-manifold $M$ induces a complex structure on $M$ (making $M$ a Riemann surface), called the \emph{conformal structure} induced by the metric.

\begin{cor}\label{T(I2(m))hyperbolicsurface}
For any $g\in\N^*$ the surfaces $\mathbf{T}(I_2(2g+1))$, $\mathbf{T}(I_2(4g))$ or $\mathbf{T}(I_2(4g+2))$ are closed compact Riemann surfaces of genus $g$. In particular, we have homeomorphisms
\[\mathbf{T}(I_2(2g+1))\simeq\mathbf{T}(I_2(4g))\simeq\mathbf{T}(I_2(4g+2)).\]
\end{cor}

\begin{proof}
Since the surfaces are orientable, the Riemannian metric induced by the one on the Coxeter complex induces a conformal structure on them. To obtain the genus, we only have to compute the Euler characteristic. Let $m$ be either $2g+1$, $4g$ or $4g+2$ and
\[W:=I_2(m)=\left<s,t~|~s^2=t^2=(st)^m=1\right>.\]
The rational chain complex associated to the simplicial complex $\Delta(\widehat{W},\widehat{S})$ has the shape
\[\xymatrix{\Q[\widehat{W}] \ar[r] & \Q[\widehat{W}/\left<s\right>]\oplus\Q[\widehat{W}/\left<t\right>]\oplus\Q[\widehat{W}/\left<\widehat{s}_0\right>] \ar[r] & \Q[\widehat{W}/\left<s,t\right>]\oplus\Q[\widehat{W}/\left<s,\widehat{s}_0\right>]\oplus\Q[\widehat{W}/\left<t,\widehat{s}_0\right>].}\]
Now, by Lemma \ref{GCWandSDP}, the complex for the surface $\mathbf{T}(I_2(m))$ is the image of the previous one by the deflation functor $\mathrm{Def}^{\widehat{W}}_W$. Thus, it is of the form
\[\xymatrix{\Q[W] \ar[r] & \Q[W/\left<s\right>]\oplus\Q[W/\left<t\right>]\oplus\Q[W/\left<r\right>] \ar[r] & \Q\oplus\Q[W/\left<s,r\right>]\oplus\Q[W/\left<t,r\right>]},\]
where $r=r_W:=(st)^{\left\lfloor{(m-1)}/{2}\right\rfloor}s\in W$. Therefore the Euler characteristic is given by 
\[\chi(\mathbf{T}_g)=1+[W:\left<s,r\right>]+[W:\left<t,r\right>]-[W:\left<s\right>]-[W:\left<t\right>]-[W:\left<r\right>]+|W|\]
\[=1+[W:\left<s,r\right>]+[W:\left<t,r\right>]-3[W:\left<s\right>]+2m=1-m+[W:\left<s,r\right>]+[W:\left<t,r\right>].\]
It is routine to compute that
\[[I_2(m):\left<s,r\right>]=\left\{\begin{array}{cc}
2 & \text{if $m=4g+2$}, \\ 1 & \text{otherwise}\end{array}\right.~~\text{and}~~[I_2(m):\left<t,r\right>]=\left\{\begin{array}{cc}
1 & \text{if $m$ is odd}, \\ m/2 & \text{otherwise}\end{array}\right.\]
thus, the Euler characteristic is given by
\[\chi(\mathbf{T}(I_2(m)))=\left\{\begin{array}{ccc}
3-m & \text{if} & m=2g+1, \\
3-m/2 & \text{if} & m=4g+2, \\
2-m/2 & \text{if} & m=4g,\end{array}\right.\]
so $\chi(\mathbf{T}(I_2(m)))=2-2g$ and the genus of $\mathbf{T}(I_2(m))$ is indeed $g$ for $m\in\{2g+1,4g+2,4g\}$.
\end{proof}

As the fundamental group of a Riemann surface of genus $g\ge 1$ is well-known (see \cite[\S 1.2]{hatcher}), we obtain a presentation for the group $Q$ in the dihedral case.

\begin{rem}\label{QforI_2}
Let $g\in\N^*$ and $m$ be either $2g+1$, $4g$ or $4g+2$. Let also $Q$ be the subgroup of $\widehat{I_2(m)}$ constructed in the previous section (see Definition \ref{defofQ}). Then we have
\[Q\simeq\pi_1(\mathbf{T}(I_2(m)))\simeq\left<x_1,\dotsc,x_g,y_1,\dotsc,y_g~|~[x_1,y_1]\cdots [x_g,y_g]=1\right>.\]
\end{rem}

In the cases where $g=1$ that is, if $I_2(m)$ is one of the Weyl groups $I_2(3)=A_2$, $I_2(4)=B_2$ or $I_2(6)=G_2$, then $\mathbf{T}(I_2(m))$ is naturally an \emph{elliptic curve}. More precisely, recalling the notation of the previous section, we have a preferred point 
\[v_0:=\overline{C}\cap\mathcal{H}\cap\bigcap_{s\ne\widehat{s}_0}H_s\in\widehat{\Sigma},\]
and the pair $(\mathbf{T}(I_2(m)),[v_0])$ is a Riemann surface of genus 1 with a marked point, hence an elliptic curve. Under the diffeomorphism $\mathbf{T}(I_2(m))\simeq\R^2/\Z^2$ induced by quotienting the space $\widehat{V}$ by the radical of the Tits form of $I_2(m)_\mathrm{a}$, the point $[v_0]$ corresponds to the origin.

We can identify the elliptic curves $\mathbf{T}(I_2(m))$ (for $m=3,4,6$) in the moduli space $\mathcal{M}_{1,1}$ of complex elliptic curves. Recall that, if $\mathbb{H}^2=\{z\in\C~;~\Im(z)>0\}$ is the Poincar\'e half plane, then we have an isomorphism (see \cite[\S 2]{hain_lectures})
\[\begin{array}{ccc}
\mathbb{H}^2/PSL_2(\Z) & \stackrel{\tiny{\sim}}\longrightarrow & \mathcal{M}_{1,1} \\ \tau & \longmapsto & \C/(\Z+\tau\Z)\end{array}\]
Recall also from \cite[Chapitre VII, \S 1.2]{serre_ari} that $D:=\{z\in\mathbb{H}^2~;~|\Re(z)|\le 1/2,~|z|\ge1\}$ is a fundamental domain for $PSL_2(\Z)$ acting on $\mathbb{H}^2$. We just have to determine a corresponding element $\tau\in D$ for each case. We have the following proposition:

\begin{prop}
Let $m\in\{3,4,6\}$ and let $\{\alpha^\vee,\beta^\vee\}$ be the simple coroots of the root system of type $I_2(m)$ and $V^*:=\R\left<\alpha^\vee,\beta^\vee\right>$. We denote by $\phi : V^*\to\C$ the unique isometry sending $\alpha^\vee$ to $1$ and $\beta^\vee$ to an element of the upper-half plane $\mathbb{H}^2$. Then we have
\[\phi(\beta^\vee)=\left\{\begin{array}{ccc}
\exp\left(\tfrac{2i\pi}{3}\right) & \text{if} & m=3, \\[.5em]
\sqrt{2}\exp\left(\tfrac{3i\pi}{4}\right) & \text{if} & m=4, \\[.5em]
\sqrt{3}\exp\left(\tfrac{5i\pi}{6}\right) & \text{if} & m=6.\end{array}\right.\]
In particular, the corresponding lattice is $\Z\oplus\tau\Z$ where $\tau\in D$ equals $e^{\frac{2i\pi}{3}}$ for $A_2$ and $G_2$ and equals $i$ for $B_2$. Hence, the curves $\mathbf{T}(A_2)$, $\mathbf{T}(B_2)$ and $\mathbf{T}(G_2)$ are rational and correspond to the \emph{orbifold points} of $D$, i.e. the points in $D$ having a non-trivial stabilizer in $PSL_2(\R)$.
\end{prop}

\begin{proof}
We normalize the roots in such a way that the short simple roots have norm 2. For $I_2(3)=A_2$, we have $|\beta|=|\alpha|=2$ and $\left<\alpha^\vee,\beta\right>=\left<\beta^\vee,\alpha\right>=-1$. Therefore, since $\phi$ is an isometry we should have
\[-\frac{1}{2}=\frac{1}{2}\left<\alpha^\vee,\beta\right>=\left<\alpha^\vee,\beta^\vee\right>=\left<\phi(\alpha^\vee),\phi(\beta^\vee)\right>=\left<1,\phi(\beta^\vee)\right>\stackrel{\tiny{\text{df}}}=\Re(\overline{\phi(\beta^\vee)})=\Re(\phi(\beta^\vee)).\]
On the other hand, we have $1=\left<\beta^\vee,\beta^\vee\right>=|\phi(\beta^\vee)|^2$ and this implies $\phi(\beta^\vee)\in\{-1/2\pm i\sqrt{3}/2\}$ and if we impose that $\phi(\beta^\vee)\in\mathbf{H}^2$, then it should have a positive imaginary part and the only possibility is $\phi(\beta^\vee)=-1/2+i\sqrt{3}/2=\exp\left(\tfrac{2i\pi}{3}\right)$. The other cases are similar.
\end{proof}

\begin{prop}\label{isometrybetweenT(I2(m))}
If $g>1$, then we have an isometry (and in particular, an isomorphism of Riemann surfaces)
\[\mathbf{T}(I_2(4g+2))\simeq\mathbf{T}(I_2(2g+1))\]
and these two are not isometric to the surface $\mathbf{T}(I_2(4g))$.
\end{prop}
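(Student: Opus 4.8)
The plan is to realise the three extensions as hyperbolic triangle reflection groups acting on a common model $\widehat{\Sigma}\simeq\mathbb{H}^2$ and to exploit the classical altitude-bisection of an equilateral triangle. Reading off the diagrams of Proposition-Definition \ref{cpcthypextsofnoncryst}, the group $\widehat{I_2(2g+1)}$ is the reflection group of the equilateral triangle $\Delta(2g{+}1,2g{+}1,2g{+}1)$, while $\widehat{I_2(4g+2)}=\Delta(2,2g{+}1,4g{+}2)$ and $\widehat{I_2(4g)}=\Delta(2,4g,4g)$; by Remark \ref{hyperfromhum} and Theorem \ref{defsteinbergtorus} each $\widehat{\Sigma}$ is isometric to $\mathbb{H}^2$ with $\widehat{W}$ acting as the corresponding reflection group, and $\mathbf{T}(I_2(m))=\mathbb{H}^2/Q$.

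For the isometry $\mathbf{T}(I_2(4g+2))\simeq\mathbf{T}(I_2(2g+1))$ I would prove the sharper statement that the two copies of $Q$ coincide as subgroups of $\mathrm{Isom}(\mathbb{H}^2)$. Dropping the altitude from the apex of the equilateral triangle onto the opposite side cuts it into two $(2,2g{+}1,4g{+}2)$-triangles; writing $t$ for the reflection in this altitude and $\widehat{s}_0,\widehat{s}_1$ for the two original mirrors bounding a half-triangle, one gets $\widehat{I_2(4g+2)}=\langle\widehat{s}_0,\widehat{s}_1,t\rangle$, and the third original mirror is $\widehat{s}_2=t\widehat{s}_1t$. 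Hence $\widehat{I_2(2g+1)}=\langle\widehat{s}_0,\widehat{s}_1,t\widehat{s}_1t\rangle$ is the index-$2$ subgroup $\ker\phi$, where $\phi:\widehat{I_2(4g+2)}\to\mathbb{Z}/2$ sends $t\mapsto1$ and $\widehat{s}_0,\widehat{s}_1\mapsto0$. Since $\widehat{r_W}=(\widehat{s}_1t)^{2g}\widehat{s}_1$, the normal generator $q_0=\widehat{s}_0\widehat{r_W}$ has $\phi(q_0)=2g=0$, so $Q(I_2(4g+2))\le\widehat{I_2(2g+1)}$. Finally I would check that the defining projections agree: with the dihedral isomorphism $\psi(s_1)=\bar{s}_1,\ \psi(s_2)=\bar{s}_2\bar{s}_1\bar{s}_2$ identifying $I_2(2g+1)$ with the index-$2$ dihedral subgroup $\langle\bar{s}_1,(\bar{s}_1\bar{s}_2)^2\rangle\le I_2(4g+2)$, one has $\psi\circ\pi_{2g+1}=\pi_{4g+2}|_{\widehat{I_2(2g+1)}}$ on the generators $\widehat{s}_0,\widehat{s}_1,t\widehat{s}_1t$; therefore the kernels coincide and $Q(I_2(2g+1))=Q(I_2(4g+2))$. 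The two surfaces are thus literally the same quotient $\mathbb{H}^2/Q$, a fortiori isometric (and isomorphic as Riemann surfaces).

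For the non-isometry with $\mathbf{T}(I_2(4g))$ I would compare isometry groups. Two closed hyperbolic surfaces are isometric iff their Fuchsian groups are conjugate in $\mathrm{Isom}(\mathbb{H}^2)$, so their normalisers are conjugate and in particular $|\mathrm{Isom}(\mathbf{T})|=[N(Q):Q]$ is an invariant. On the one hand $Q(I_2(4g))\unlhd\Delta(2,4g,4g)$ by construction, so $8g=[\Delta(2,4g,4g):Q]$ divides $|\mathrm{Isom}(\mathbf{T}(I_2(4g)))|$. On the other hand $\Delta(2,2g{+}1,4g{+}2)$ is a maximal Fuchsian group for $g>1$ (its triple has three distinct entries, so it is neither isosceles nor equilateral, hence never the proper subgroup in any of the infinite families of inclusions between triangle groups), so its normaliser is itself and $|\mathrm{Isom}(\mathbf{T}(I_2(2g+1)))|=[\Delta(2,2g{+}1,4g{+}2):Q]=|I_2(4g+2)|=8g+4$. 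Since $8g\nmid 8g+4$ for $g>1$, the two isometry groups have different orders, so the surfaces are not isometric. Equivalently, the order-$(4g{+}2)$ rotation of $\Delta(2,2g{+}1,4g{+}2)$ descends to an isometry of order $4g+2$ of $\mathbf{T}(I_2(2g+1))$, because $Q$ is torsion-free by Corollary \ref{Qistorsionfree}, whereas every finite-order isometry of $\mathbf{T}(I_2(4g))$ has order dividing a cone-point order in $\{2,4g,4g\}$ and hence at most $4g<4g+2$.

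The isometry part is elementary and exact, as it is a genuine equality of subgroups rather than a mere conjugacy, so the main obstacle is the non-isometry: one must pin down the normaliser $N(Q)$, i.e.\ establish the maximality of $\Delta(2,2g{+}1,4g{+}2)$. This relies on Singerman's classification of inclusions between hyperbolic triangle groups, and one has to dispose separately of the finitely many sporadic small values of $g$ for which $\Delta(2,2g{+}1,4g{+}2)$ might embed into an exceptional triangle group; these can be settled by hand or with the GAP package accompanying the paper. Once maximality is secured, the divisibility $8g\nmid 8g+4$ closes the argument uniformly.
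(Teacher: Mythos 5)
Your first half is correct and is in substance the paper's own argument: the paper likewise realises $\widehat{I_2(2g+1)}$ as the index-$2$ subgroup $\langle s,\,tst,\,\widehat{s}_0\rangle$ of $\widehat{I_2(4g+2)}$ (via an explicit matrix $P$ conjugating the two Tits forms, rather than your altitude bisection), notes that the two normal generators $q_0$ coincide because $r_{4g+2}=((st)^2)^g s=r_{2g+1}$, and deduces $Q_{2g+1}=Q_{4g+2}$ — the paper by counting indices using $[\widehat{I_2(4g+2)}:\widehat{I_2(2g+1)}]=2$, you by matching the two projections $\pi$; both closures of this step are sound.

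The non-isometry half is where you genuinely diverge from the paper (which instead compares the traces of the two normal generators $q_{2g+1}$ and $q_{4g}$ inside $PO(1,2)$), and it contains a real error. Your justification of maximality of $\Delta(2,2g{+}1,4g{+}2)$ — that a triple with three distinct entries cannot be the subgroup in any infinite family of inclusions — is false. Singerman's list contains, besides the two \emph{normal} families with isosceles or equilateral subgroups, several infinite \emph{non-normal} families, among them precisely
\[\Delta(2,n,2n)\ \subset\ \Delta(2,3,2n)\quad\text{of index }3,\]
which with $n=2g+1$ applies to your group for \emph{every} $g$. So $\Delta(2,2g{+}1,4g{+}2)$ is never a maximal Fuchsian group, this is not a matter of finitely many sporadic small $g$, and your closing hedge does not cover it; the equality $|\mathrm{Isom}(\mathbf{T}(I_2(2g+1)))|=8g+4$ is therefore unjustified as stated. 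The same oversight infects your ``equivalent'' cone-point argument: $\Delta(2,4g,4g)$ is isosceles, hence index $2$ in $\Delta(2,4,4g)$, and in any case a finite-order isometry of the quotient surface need not lift to an elliptic element of $N(Q)$, so its order is not bounded by cone-point orders.

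The good news is that your main divisibility argument survives the repair. By Greenberg--Singerman, $N(Q_{2g+1})$ is a triangle group containing $\Delta(2,2g{+}1,4g{+}2)$ with finite index, hence is either that group or $\Delta(2,3,4g{+}2)$ (which \emph{is} maximal for $g>1$); thus $|\mathrm{Isom}(\mathbf{T}(I_2(2g+1)))|\in\{8g+4,\,24g+12\}$. Since $8g$ divides $|\mathrm{Isom}(\mathbf{T}(I_2(4g)))|$ while $8g\nmid 8g+4$ and $8g\nmid 24g+12$ for $g>1$, the two surfaces still cannot be isometric. With this patch your route is a legitimate alternative to the paper's trace computation — and arguably more robust, since the paper only compares the traces of the two distinguished generators, whereas your invariant $[N(Q):Q]$ is manifestly an isometry invariant of the quotient.
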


\begin{proof}
By \cite[Theorem 8.1.5]{ratcliffe_hyperbolic}, it suffices to show that the groups $Q_{2g+1}$ and $Q_{4g+2}$ are conjugate in the positive Lorentz group $PO(1,2)\simeq\mathrm{Isom}(\mathbf{H}^2)$ and are not conjugate to $Q_{4g}$.

Let $m:=2g+1$ and let us prove that $Q_m$ and $Q_{2m}$ are conjugate in $PO(1,2)$. Denote $I_2(2m)=\left<s,t~|~s^2=t^2=(st)^{2m}=1\right>$ and $\widehat{I_2(2m)}=\left<s,t,\widehat{s}_0\right>$ its hyperbolic extension. Let $s':=s$, $t':=tst=s^t$ and $\widehat{s}_0':=\widehat{s}_0$. Then $\left<s',t',\widehat{s}_0'\right>=\widehat{I_2(m)}$ and $\left<s',t'\right>=I_2(m)$. Recall moreover that we have the reflection $r_{2m}=(st)^{2g}s=((st)^2)^gs=(s't')^gs'=r_m$. Let $\alpha$, $\beta$ and $\gamma$ denote the simple roots of $\widehat{I_2(2m)}$ and $V_{2m}:=\mathrm{span}_\R(\alpha,\beta,\gamma)$. We have the representation
\[\widehat{I_2(2m)}\stackrel{\tiny{\sigma_{2m}}}\longhookrightarrow O(V_{2m},B_{2m}),\]
where
\[B_{2m}=\left(\begin{smallmatrix}1 & -\cos(\pi/2m) & -\cos(\pi/m) \\ -\cos(\pi/2m) & 1 & 0 \\ -\cos(\pi/m) & 0 & 1\end{smallmatrix}\right).\]
In the same way, denote $V_m:=\mathrm{span}_\R(\alpha',\beta',\gamma')$ and $\sigma_m : \widehat{I_2(m)}\longhookrightarrow O(V_m,B_m)$, where
\[B_m=\left(\begin{smallmatrix}1 & -\cos(\pi/m) & -\cos(\pi/m) \\ -\cos(\pi/m) & 1 & -\cos(\pi/m) \\ -\cos(\pi/m) & \cos(\pi/m) & 1\end{smallmatrix}\right).\]
Consider the linear map $P : V_{2m}\to V_m$ with matrix
\[P=\left(\begin{smallmatrix}1 & 1 & 0 \\ 0 & 2\cos(\pi/2m) & 0 \\ 0 & 0 & 1\end{smallmatrix}\right).\]
Then we have $B_m={}^{\rm t\!}{P}B_{2m}P$, so $P$ induces an isomorphism $O(V_{2m},B_{2m})\to O(V_m,B_m)$ fitting in a commutative diagram
\[\xymatrix{\widehat{I_2(m)} \ar@{^{(}->}[d] \ar@{^{(}->}^<<<<<{\sigma_m}[r] & O(V_m,B_m) \ar^<<<<<{\sim}[r] & PO(1,2) \\ \widehat{I_2(2m)} \ar@{^{(}->}_<<<<<{\sigma_{2m}}[r] & O(V_{2m},B_{2m}) \ar^{\sim}[u] \ar^<<<<<{\sim}[r] & PO(1,2 \ar^{\sim}[u])}\]
and thus the group $\sigma_m(\widehat{I_2(m)})$ is conjugate in $PO(1,2)$ to a subgroup of $\sigma_{2m}(\widehat{I_2(2m)})$. Therefore, identifying $\widehat{I_2(m)}$ with its image in $\widehat{I_2(2m)}$, it suffices to prove that $Q_{2m}=Q_m$. Recall that $q_{2m}\stackrel{\tiny{\text{df}}}=\widehat{s}_0r_{2m}=\widehat{s}_0r_m=q_m$, so $q_m^{\widehat{I_2(m)}}\subset q_{2m}^{\widehat{I_2(2m)}}$ and $Q_m\le Q_{2m}$. Since we have
\begin{align*}
2m[\widehat{I_2(2m)}:\widehat{I_2(m)}]&=[\widehat{I_2(2m)}:\widehat{I_2(m)}][\widehat{I_2(m)}:Q_m]=[\widehat{I_2(2m)}:Q_m] \\
&=[\widehat{I_2(2m)}:Q_{2m}][Q_{2m}:Q_m]=4m[Q_{2m}:Q_m],
\end{align*}
we are left to show that $[\widehat{I_2(2m)}:\widehat{I_2(m)}]=2$. Let $w\in \widehat{I_2(2m)}$. By induction on $\ell(w)$ and because $t$ and $\widehat{s}_0$ commute, we immediately see that $w\in \widehat{I_2(m)}$ if and only if the number of occurrences of $t$ in any reduced expression of $w$ is even. Hence we have $[\widehat{I_2(2m)}:\widehat{I_2(m)}]\le 2$ and since $st$ and $\widehat{s}_0t$ have even order, the map $\widehat{I_2(2m)}\to \Z/2\Z$ sending $s$ and $\widehat{s}_0$ to $0$ and $t$ to $1$ is a homomorphism whose kernel contains $\widehat{I_2(m)}$, hence the result.

We now prove that $Q_{2g+1}$ and $Q_{4g}$ are not conjugate in $PO(1,2)$. It is enough to prove that the elements $\sigma_{2g+1}(q_{2g+1})\in PO(1,2)$ and $\sigma_{4g}(q_{4g})$ have different traces. Write $\widehat{I_2(2g+1)}=\left<s,t,\widehat{s}_0\right>$ and $\widehat{I_2(4g)}=\left<s',t',\widehat{s}_0'\right>$. We have $q_{2g+1}=\widehat{s}_0(st)^gs$ and $q_{4g}=\widehat{s}_0'(st)^{2g}s'$ and we can write explicitly the matrices of the simple reflections in the geometric representation, we diagonalize $st$ and after calculations, we find
\[\tr(q_{2g+1})=8\left(1+\cos\left(\frac{\pi}{2g+1}\right)\right)\cot^2\left(\frac{\pi}{2g+1}\right)-1\ne4\cot^2\left(\frac{\pi}{4g}\right)-1=\tr(q_{4g}).\]
\end{proof}

Recall that a \emph{Belyi function} on a Riemann surface $X$ is a holomorphic map $\beta : X \to \widehat{\C}$ which is ramified only over three points of $\widehat{\C}$. Since $\widehat{I_2(m)}$ is a compact triangle group and $Q_m\unlhd \widehat{I_2(m)}$ is torsion-free and of finite index, by \cite[Theorem 3.10]{jones-wolfart}, the projection
\[\beta : \mathbf{T}(I_2(m))=\mathbb{H}^2/Q_m \longtwoheadrightarrow \mathbb{H}^2/\widehat{I_2(m)}\simeq\widehat{\C}\]
is a Belyi function on $\mathbf{T}(I_2(m))$ of degree $[\widehat{I_2(m)}:Q_m]=2m$. Using \cite[Theorem 1.3]{jones-wolfart}, this implies the following result:

\begin{prop}\label{definableoverQbar}
For any $m\ge 3$, the Riemann surface $\mathbf{T}(I_2(m))$ may be defined over a number field (or equivalently, may be defined over $\overline{\Q}$). Moreover, if $m=5$ or $m\ge 7$, then the 1-skeleton of the tessellation $\Delta(\widehat{I_2(m)})/Q_m$ defines a dessin d'enfant on $\mathbf{T}(I_2(m))$.
\end{prop}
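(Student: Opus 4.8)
The plan is to deduce both assertions from the degree-$2m$ Belyi function $\beta : \mathbf{T}(I_2(m)) = \mathbb{H}^2/Q_m \twoheadrightarrow \mathbb{H}^2/\widehat{I_2(m)} \simeq \widehat{\C}$ constructed in the paragraph above, together with the two quoted results of Jones and Wolfart. For the first assertion I would invoke Belyi's theorem in the form of \cite[Theorem 1.3]{jones-wolfart}: a compact Riemann surface carries a Belyi function if and only if it can be defined over a number field, equivalently over $\overline{\Q}$. Since such a $\beta$ has already been exhibited for every $m\ge 3$---using that $\widehat{I_2(m)}$ is a triangle group and that $Q_m\unlhd\widehat{I_2(m)}$ is torsion-free of finite index, via \cite[Theorem 3.10]{jones-wolfart}---this settles the claim immediately. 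I would emphasise that this step ignores the geometry, so it also covers the crystallographic values $m\in\{3,4,6\}$, in agreement with the explicit elliptic-curve models found earlier.

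For the second assertion I would restrict to the hyperbolic range $m=5$ or $m\ge 7$, where $\widehat{I_2(m)}$ is a cocompact hyperbolic triangle group and $\widehat{\Sigma}\simeq\mathbb{H}^2$. The guiding observation is that the $\widehat{W}$-triangulation $\Delta(\widehat{I_2(m)})$ of $\mathbb{H}^2$ has as chambers the $\widehat{W}$-translates of the fundamental simplex $\overline{C}/\R_+^*$, each carried by the quotient $\mathbb{H}^2\to\mathbb{H}^2/\widehat{I_2(m)}\simeq\widehat{\C}$ onto the fundamental triangle, whose three vertices are the cone points. After normalising $\beta$ so that these cone points are $0$, $1$ and $\infty$, the $1$-skeleton of the quotient triangulation $\Delta(\widehat{I_2(m)})/Q_m$ is exactly the $\beta$-preimage of the boundary of the orbifold triangle, i.e. of the three arcs joining $0$, $1$ and $\infty$. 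Isolating the bipartite subgraph over the arc $[0,1]$---whose black and white vertices are the $Q_m$-orbits of the two relevant vertex-types and whose edges are the intervening $1$-cells---then displays $\beta^{-1}([0,1])$ as a connected bipartite graph embedded in $\mathbf{T}(I_2(m))$ with simply-connected complementary faces. As $\mathbf{T}(I_2(m))$ is orientable and $Q_m\le\ker\varepsilon$ acts by orientation-preserving isometries (as in the proof of Theorem \ref{defsteinbergtorus}), this graph is a genuine dessin d'enfant, and \cite[Theorem 1.3]{jones-wolfart} ensures that it records the same arithmetic data as the Belyi pair $(\mathbf{T}(I_2(m)),\beta)$.

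The step I expect to be delicate is the explicit identification of this combinatorial dessin with the geometric $1$-skeleton, rather than the appeals to Belyi's theorem. The care is in matching the three $W$-orbits of $1$-cells (each of size $m$, read off from the chain complex in the proof of Corollary \ref{T(I2(m))hyperbolicsurface}) with the three arcs of the orbifold triangle, deciding which vertex-type realises $0$, $1$ and $\infty$, and checking that the valencies of the dessin agree with the local orders of the cone points; these orders depend on $m\bmod 4$, since for even $m$ one cone point has order $2$ (giving a clean dessin, equivalently a map) while for odd $m$ all three equal $m$ (a genuine $(m,m,m)$-hypermap). A further point to keep honest is the factor of two between the full reflection group $\widehat{I_2(m)}$ and its rotation subgroup $\ker\varepsilon$, which must be tracked so that the degree and ramification data quoted for $\beta$ stay consistent. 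Once this dictionary is in place, the remainder is a direct application of the cited theorems.
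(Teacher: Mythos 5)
Your proposal is correct and takes essentially the same route as the paper: the paper derives both claims directly from the Belyi function $\beta$ furnished by \cite[Theorem 3.10]{jones-wolfart} (using that $\widehat{I_2(m)}$ is a cocompact triangle group and $Q_m$ is torsion-free of finite index) and then invokes \cite[Theorem 1.3]{jones-wolfart}, stating the proposition with no further argument. Your additional care in extracting the bipartite graph $\beta^{-1}([0,1])$ from the $1$-skeleton, checking connectedness and disc faces, and tracking the index-two passage between the reflection group and its rotation subgroup only fills in details the paper leaves implicit.
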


\begin{rem}
It is a reasonable to expect that $\mathbf{T}(I_2(m))$ is definable over $\Q(\cos(2\pi/m))$.
\end{rem}

\begin{exemple}\label{I2(5,7)}
The triangulation $\Delta(\widehat{I_2(5)})$ is the classical tessellation $\{3,10\}$ of the Poincar\'e disk. More precisely, the Tits form $\widehat{B}$ is given by
\[\widehat{B}=\left(\begin{smallmatrix}1 & -c & -c \\ -c & 1 & -c \\ -c & -c & 1\end{smallmatrix}\right)~~\text{with}~~c=\cos(\pi/5)\]
and, if $v^*\in V^*$ is a normalized eigenvector for the unique negative eigenvalue of $\widehat{B}$, then we have an identification with the hyperbolic plane
\[\mathcal{H}:=\{\lambda\in V^*~;~\widehat{B}(\lambda,\lambda)=-1,~\widehat{B}(v^*,\lambda)<0\}\simeq\mathbb{H}^2\]
and the stereographic projection on the hyperplane $\widehat{B}(v^*,-)=0$ with pole $\lambda_0$ gives the Poincar\'e disk model for $\mathbb{H}^2$. Under this projection, we represent the tessellation $\Delta(\widehat{I_2(5)})=\{3,10\}$ of $\mathcal{H}$ as in Figure \ref{3,10tessellation}, where the black triangles are the images of the fundamental triangle $\overline{C}/\R^*_+\simeq \overline{C}\cap\mathcal{H}$ under elements of odd length. In this tessellation, we can identify the triangles that are in the $Q$-orbit of $\overline{C}\cap\mathcal{H}$. These are displayed in green in Figure \ref{patreon}. Collapsing these triangles in one gives the surface $\mathbf{T}(I_2(5))$. The computations were done using \cite{maple}.

\begin{center}
\begin{figure}[h!]
\begin{subfigure}[h]{0.22\textwidth}
\centering
\includegraphics[scale=0.11]{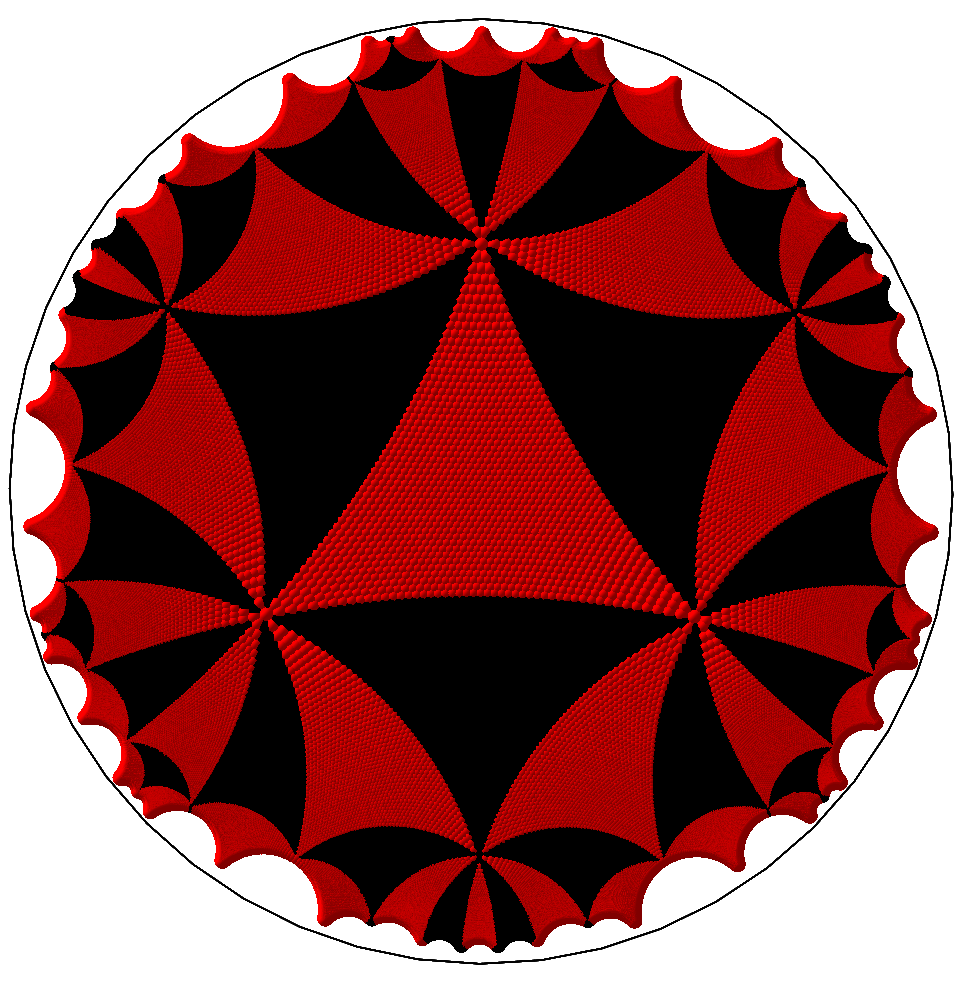}
\caption{The $\{3,10\}$ tessellation of $\Sigma(\widehat{I_2(5)})\simeq\mathbb{H}^2$.}
\label{3,10tessellation}
\end{subfigure}
\hspace{1mm}
\begin{subfigure}[h]{0.22\textwidth}
\centering
\includegraphics[scale=0.08]{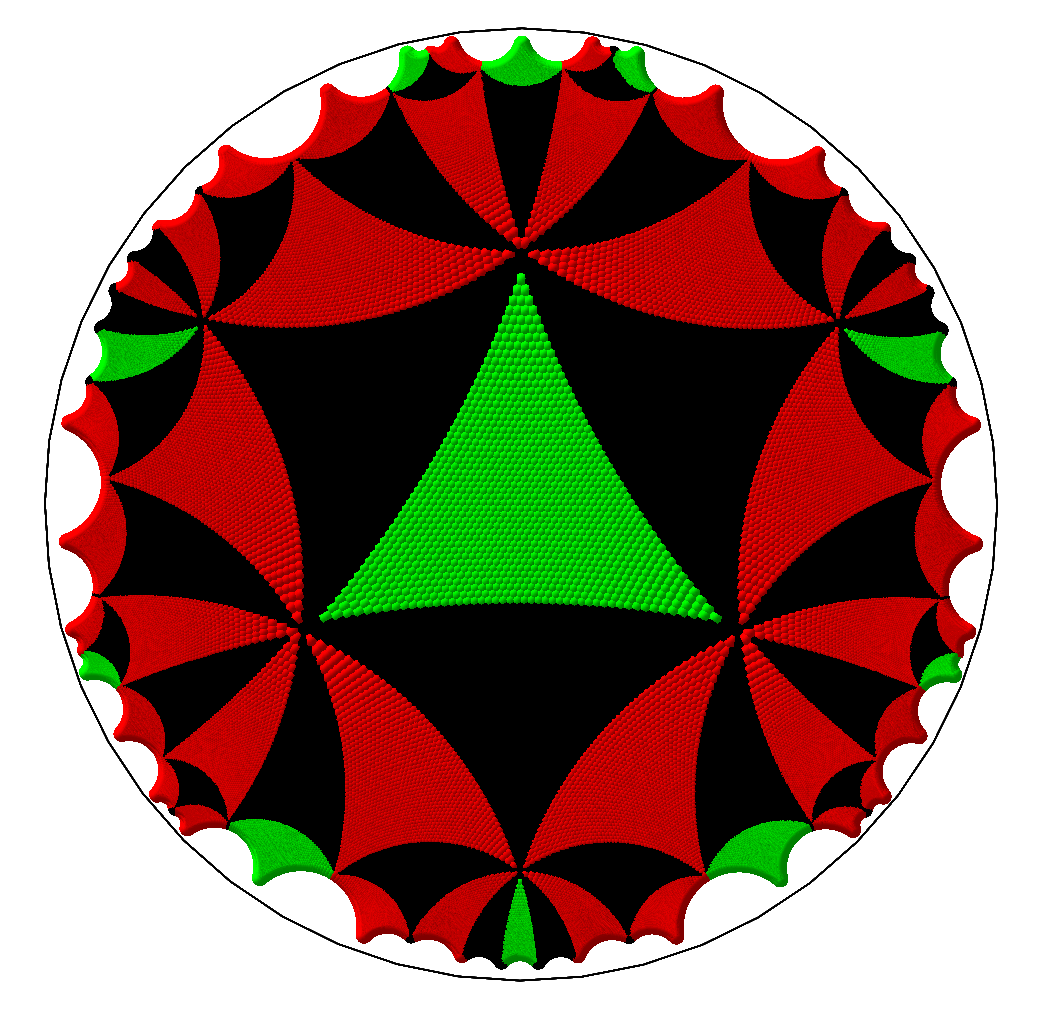}
\caption{The green triangles form the $Q$-orbit of the triangle $\overline{C}\cap\mathcal{H}$.}
\label{patreon}
\end{subfigure}
\hspace{1mm}
\begin{subfigure}[h]{0.22\textwidth}
\centering
\includegraphics[scale=0.14]{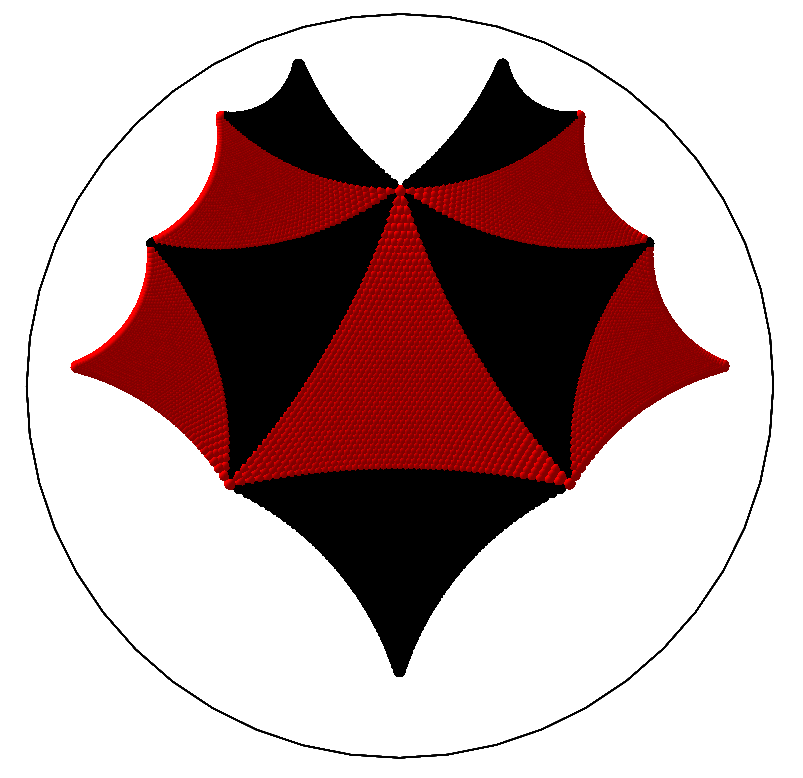}
\caption{Fundamental domain for $Q$ in the Poincar\'e disk.}
\label{fundT(I2(5))}
\end{subfigure}
\hspace{1mm}
\begin{subfigure}[h]{0.22\textwidth}
\centering
\begin{tikzpicture}[scale=1.12]
	\coordinate (a) at (1,0);
	\coordinate (b) at (0.866,0.5);
	\coordinate (c) at (0.5,0.866);
	\coordinate (d) at (0,1);
	\coordinate (e) at (-0.5,0.866);
	\coordinate (f) at (-0.866,0.5);
	\coordinate (g) at (-1,0);
	\coordinate (h) at (-0.866,-0.5);
	\coordinate (i) at (-0.5,-0.866);
	\coordinate (j) at (0,-1);
	\coordinate (k) at (0.5,-0.866);
	\coordinate (l) at (0.866,-0.5);
	
	\draw (a) node[right]{$a$};
	\draw (b) node[right]{$b$};
	\draw (c) node[above right]{$c$};
	\draw (d) node[above]{$b$};
	\draw (e) node[above left]{$a$};
	\draw (f) node[left]{$c$};
	\draw (g) node[left]{$b$};
	\draw (h) node[left]{$c$};
	\draw (i) node[below left]{$a$};
	\draw (j) node[below]{$b$};
	\draw (k) node[below right]{$a$};
	\draw (l) node[right]{$c$};
		
	\draw[red] (a)--(b)
	(j)--(k);
	\draw[green] (b)--(c)
	(f)--(g);
	\draw[orange] (c)--(d)
	(g)--(h);
	\draw[cyan] (d)--(e)
	(i)--(j);
	\draw (e)--(f)
	(l)--(a);
	\draw[violet] (h)--(i)
	(k)--(l);
	
	\draw[opacity=0.3] (e)--(b)
	(e)--(c)
	(e)--(g)
	(e)--(h)
	(e)--(j)
	(e)--(l)
	(l)--(b)
	(j)--(h)
	(j)--(l);
	
	\fill[fill=black] (a) circle (1pt);
	\fill[fill=black] (b) circle (1pt);
	\fill[fill=black] (c) circle (1pt);
	\fill[fill=black] (d) circle (1pt);
	\fill[fill=black] (e) circle (1pt);
	\fill[fill=black] (f) circle (1pt);
	\fill[fill=black] (g) circle (1pt);
	\fill[fill=black] (h) circle (1pt);
	\fill[fill=black] (i) circle (1pt);
	\fill[fill=black] (j) circle (1pt);
	\fill[fill=black] (k) circle (1pt);
	\fill[fill=black] (l) circle (1pt);
\end{tikzpicture}
\caption{Fundamental domain for $I_2(5)$ in $\mathbf{T}(I_2(5))$.}
\label{2toruspatreon}
\end{subfigure}
\caption{$I_2(5)$-tessellation of the Poincar\'e disk, $Q$-orbit of the fundamental triangle, fundamental domain for $Q$ and its image in $\mathbf{T}(I_2(5))$. The first three images were created with \cite{maple}.}
\end{figure}
\end{center}
We remark that we can extract a fundamental domain for $Q$ on $\mathbf{T}(I_2(5))$ as the projection of the domain displayed in Figure \ref{fundT(I2(5))}. Rearranging the figure we obtain the triangulation displayed in the Figure \ref{2toruspatreon}, where the points with the same name (resp. the edges with the same color) are identified. The resulting space is indeed a closed surface of genus 2.
\end{exemple}

\section{Equivariant chain complex of $\mathbf{T}(W)$ and computation of homology}

\subsection{The $W$-dg-ring of $\mathbf{T}(W)$}
\hfill

The combinatorics of the complex $C_\mathrm{cell}^*(\mathbf{T}(W),W;\Z)$ is fairly similar to the one of the complex $C_\mathrm{cell}^*(T,W;\Z)$ we constructed in the first part and the proofs given above can be applied \emph{verbatim} to this new situation.

\begin{theo}\label{cochaincomplexforT(W)}
The $W$-dg-ring $C^*_{\mathrm{cell}}(\mathbf{T}(W),W;\Z)$ associated to the $W$-triangulation $\Delta(\widehat{W},\widehat{S})/Q$ of $\mathbf{T}(W)$ has homogeneous components
\[C^k_\mathrm{cell}(\mathbf{T}(W),W;\Z)=\bigoplus_{\substack{I\subset \widehat{S} \\ |I|=n-k}}\Z[\pi({}^I{\widehat{W}})]\simeq\bigoplus_{\substack{I\subset \widehat{S} \\|I|=n-k}}\Z[\pi(\widehat{W}_I)\backslash \widehat{W}],\]
differentials given, for any $I\subset\widehat{S}$ and $w\in\widehat{W}$, by
\[d^k(\pi({}^I{w}))=\sum_{\substack{0\le u \le k+1 \\ j_{u-1}<j<j_u}}(-1)^u\pi\left(\epsilon^{I\setminus\{j\}}_Iw\right),~\epsilon_I^J=\sum_{x\in{}^J_I{\widehat{W}}}x,\]
where $\{j_0<\cdots<j_k\}:=\widehat{S}\setminus I$. Its product 
\[C^p_\mathrm{cell}(\mathbf{T}(W),W;\Z)\otimes_\Z C^q_\mathrm{cell}(\mathbf{T}(W),W;\Z) \stackrel{\tiny{\cup}}\longrightarrow C^{p+q}_\mathrm{cell}(\mathbf{T}(W),W;\Z)\]
is induced by the deflation from $\widehat{W}$ to $W$ of the unique map
\[\Z[[{}^I\widehat{W}]]\otimes_\Z\Z[[{}^J\widehat{W}]]\longrightarrow \Z[[{}^{I\cap J}\widehat{W}]]\]
satisfying the formula
\[\forall x,y\in\widehat{W},~{}^I{x}\cup{}^J{y}=\delta_{\max(I^\complement),\min(J^\complement)}\times\left\{\begin{array}{cc}
{}^{I\cap J}((xy^{-1})_Jy) & \text{if }xy^{-1}\in\widehat{W}_I\widehat{W}_J \\ 0 & \text{otherwise}.\end{array}\right.\]
\end{theo}

\begin{rem}
As explained in \cite[\S 2.3]{babson-reiner}, a quotient simplicial complex of the form $\Delta(\widehat{W},\widehat{S})/H$ (with $H\le\widehat{W}$) has a an interpretation in terms of double cosets. In our case, we have an isomorphism of posets
\[\begin{array}{ccc}
\left(F(\Delta(\widehat{W},\widehat{S})/Q),\subseteq\right) & \stackrel{\tiny{\sim}}\longrightarrow & \left(\{(I,Qw\widehat{W}_I)\}_{I\subsetneq\widehat{S},~w\in\widehat{W}},\preceq\right) \\ \pi(\widehat{W}^I) \ni \pi(w^I) & \longmapsto & (I,Qw\widehat{W}_I) \end{array}\]
where the order $\preceq$ on the second factor is defined by
\[(I,Qw\widehat{W}_I)\preceq(J,Qw'\widehat{W}_J)~\stackrel{\tiny{\text{df}}}\Longleftrightarrow~\left\{\begin{array}{c}I\supseteq J \\ Qw\widehat{W}_I\supseteq Qw'\widehat{W}_J\end{array}\right.\]
and we may rephrase the above results using this poset.
\end{rem}

\subsection{The homology $W$-representation of $\mathbf{T}(W)$}
\hfill

We can now determine the action of $W$ on $H_*(\mathbf{T}(W);\overline{\Q})$. Recall from \cite[Theorem 5.3.8]{geck-pfeiffer} that a splitting field for $W$ is given by
\[\Q(W)=\Q(\cos(2\pi/m_{s,t}),~s,t\in S).\]
If $W$ is a Weyl group, then $\Q(W)=\Q$ and we have $\Q(I_2(m))=\Q(\cos(2\pi/m))$ and $\Q(H_3)=\Q(H_4)=\Q(\sqrt{5})$. Recall that we have $n=\mathrm{rk}(W)=\dim\mathbf{T}(W)$.

\begin{prop}\label{H0andHn}
Let $\mathds{1}$ and $\varepsilon$ be the trivial and signature modules over $\Z[W]$, respectively. We have isomorphisms of $\Z[W]$-modules
\[H_0(\mathbf{T}(W);\Z)\simeq\mathds{1}~~\text{and}~~H_n(\mathbf{T}(W);\Z)\simeq\varepsilon.\]
\end{prop}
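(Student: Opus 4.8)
The plan is to compute $H_0$ and $H_n$ directly from the cellular chain complex $C_*^\mathrm{cell}(\mathbf{T}(W),W;\Z)$ described in Theorem \ref{chaincomplexforT(W)}, exploiting the fact that $\mathbf{T}(W)$ is a closed, connected, orientable $n$-manifold (Theorem \ref{defsteinbergtorus}).

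For $H_0$, the argument is purely topological and formal. Since $\mathbf{T}(W)$ is connected, its ordinary (non-equivariant) zeroth homology is $\Z$, generated by any single vertex. To identify the $W$-action, I would note that in degree $0$ the complex is $\bigoplus_{|I|=n}\Z[\pi(\widehat{W}^I)]$, and that $H_0$ is the cokernel of $\partial_1$. Because $W$ acts on the (connected) $1$-skeleton and permutes the vertices, any two vertices become identified in $H_0$; concretely, for each edge the boundary map sends it to a difference of vertices, and the quotient by the image of $\partial_1$ collapses all of $H_0$ to a single $\Z$ on which $W$ necessarily acts trivially (an automorphism of $\Z$ fixing a generator up to the identification). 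Hence $H_0(\mathbf{T}(W),\Z)\simeq\mathds{1}$ as a $\Z[W]$-module.

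For $H_n$, I would use orientability together with the standard fact that for a closed connected orientable $n$-manifold $H_n(\mathbf{T}(W),\Z)\simeq\Z$, generated by the fundamental class $[\mathbf{T}(W)]$. The content is to determine how $W$ acts on this generator. The key observation, already recorded in the proof of Theorem \ref{defsteinbergtorus}, is that $Q\le\ker(\varepsilon)$ so that $Q$ acts orientation-preservingly; consequently $W=\widehat{W}/Q$ acts on the oriented manifold $\mathbf{T}(W)$, and each generator $s_i$ of $W$ acts as a reflection of the Coxeter complex $\widehat{\Sigma}$, hence reverses orientation. Therefore $w\in W$ acts on $H_n\simeq\Z$ by the sign $(-1)^{\ell(w)}=\varepsilon(w)$, giving $H_n(\mathbf{T}(W),\Z)\simeq\varepsilon$. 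I would make this precise at the chain level: $H_n$ is the kernel of $\partial_n$ on the top cells $\bigoplus_{I=\emptyset}\Z[\pi(\widehat{W})]=\Z[W]$, and the fundamental class is the $W$-alternating sum $\sum_{w\in W}\varepsilon(w)\,w$ of top simplices (the images of the $W$-translates of the fundamental chamber $\overline{C}$), which is precisely a generator of the signature representation inside $\Z[W]$.

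The main obstacle is the verification that the alternating sum $\sum_{w}\varepsilon(w)\,w$ is a cycle, i.e. lies in $\ker\partial_n$, and that it generates $H_n$ over $\Z$ (torsion-freeness in top degree). For this I would check that the boundary contributions from facets shared by adjacent top simplices cancel: two top simplices meeting along a codimension-one face differ by a simple reflection, whose $\varepsilon$-values have opposite sign, so with the signs built into $\partial_n$ the shared-facet terms cancel in pairs. This is exactly the combinatorial shadow of orientability, and it is the step that must be carried out carefully rather than quoted; once it is done, Poincaré duality for the closed orientable manifold $\mathbf{T}(W)$ (or a direct rank count from the complex) confirms that this cycle generates $H_n\simeq\Z$, completing the identification with $\varepsilon$.
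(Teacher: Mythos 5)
Your proposal is correct and follows essentially the same route as the paper: both identify $H_n(\mathbf{T}(W),\Z)$ with $\ker\partial_n\subset\Z[W]$ (there being no cells above degree $n$) and pin it down as $\Z e$ for the alternating element $e=\sum_{w\in W}\varepsilon(w)\,w$, whose relation $we=\varepsilon(w)e$ gives the signature module, with $H_0=\mathds{1}$ being immediate from connectedness. The only cosmetic difference is that the paper establishes $\ker\partial_n=\Z e$ by a direct coefficient computation ($\partial_n(x)=0$ forces $x_w=\varepsilon(w)x_1$), whereas you get the same conclusion by combining the topological fact $H_n\simeq\Z$ for a closed connected orientable manifold with the facet-cancellation check that $e$ is a cycle---your cancellation argument is precisely the paper's verification that $\partial_n(e)=0$, phrased geometrically.
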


\begin{proof}
Since $\widehat{\Sigma}$ is path-connected, its quotient $\mathbf{T}(W)$ is path-connected too and is orientable by Theorem \ref{defsteinbergtorus}. Thus, we have an isomorphism of abelian groups
\[H_0(\mathbf{T}(W);\Z)\simeq\Z\simeq H_n(\mathbf{T}(W);\Z).\]
It is clear that $H_0(\mathbf{T}(W);\Z)$ is the trivial module and we have $H_n(\mathbf{T}(W);\Z)=\ker(\partial_n)$ with
\[\begin{array}{ccccc}
\partial_n & : & \Z[W] & \longrightarrow & \bigoplus_{i=0}^n\Z[W/\left<s_i\right>] \\ & & w & \longmapsto & \sum_{i=0}^n(-1)^iw\!\left<s_i\right>\end{array}\]
where $s_i=\pi(\widehat{s}_i)$ is a simple reflection of $W$ for $i\ge 1$ and $s_0:=r_W=\pi(\widehat{s}_0)$. It is routine to check that $\ker\partial_n=\Z e$ where $e=\sum_w\varepsilon(w)w=\sum_w(-1)^{\ell(w)}w\in\Z[W]$.
\end{proof}

\begin{prop}\label{PD}
The homology $H_*(\mathbf{T}(W);\Z)$ is torsion-free and the Poincar\'e duality on $\mathbf{T}(W)$ induces isomorphisms of $\Z[W]$-modules
\[H_{n-i}(\mathbf{T}(W);\Z)^\vee\simeq H_i(\mathbf{T}(W);\Z)^\vee\otimes_\Z\varepsilon.\]
\end{prop}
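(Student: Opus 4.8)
The plan is to treat the two assertions separately, establishing torsion-freeness first and then feeding it into an equivariant form of Poincaré duality. For the torsion-freeness I would argue by the usual bootstrap between homology and cohomology. Since $\mathbf{T}(W)$ is a closed orientable $n$-manifold (Theorem \ref{defsteinbergtorus}), ordinary Poincaré duality gives $H_i\simeq H^{n-i}$, while the universal coefficient theorem identifies the torsion subgroup $\mathrm{Tors}(H^{k})\simeq\mathrm{Tors}(H_{k-1})$ (the $\Ext$-summand). Combining the two yields $\mathrm{Tors}(H_i)\simeq\mathrm{Tors}(H_{n-i-1})$ for every $i$. As $H_0\simeq\Z$ and $H_n\simeq\Z$ are free, this relation propagates freeness and reduces the entire claim to the single group $H_1(\mathbf{T}(W),\Z)$. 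When $n=2$ (the groups $I_2(m)$) the relation already gives $\mathrm{Tors}(H_1)\simeq\mathrm{Tors}(H_0)=0$, so nothing further is needed; for $H_3$ and $H_4$ I would invoke $\mathbf{T}(W)\simeq K(Q,1)$ (Corollary \ref{T(W)isBQ}), so that $H_1(\mathbf{T}(W),\Z)\simeq Q^{\mathrm{ab}}$, and conclude freeness from the explicit computations $Q^{\mathrm{ab}}\simeq\Z^{11}$ and $Q^{\mathrm{ab}}\simeq\Z^{24}$ of Corollary \ref{abelianizationofQ}.

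With torsion-freeness in hand, the universal coefficient isomorphism $H^{k}(\mathbf{T}(W),\Z)\simeq H_k(\mathbf{T}(W),\Z)^\vee$ upgrades to an isomorphism of $\Z[W]$-modules (there is no $\Ext$ term, and the evaluation pairing is $W$-invariant for the contragredient action). For the duality itself I would use the equivariant cap product with a fundamental class $[\mathbf{T}(W)]$. The decisive geometric input is Proposition \ref{H0andHn}: the top homology is the sign representation, that is $w_\ast[\mathbf{T}(W)]=\varepsilon(w)[\mathbf{T}(W)]$, so $\varepsilon$ is exactly the orientation character of the $W$-action. Capping with $[\mathbf{T}(W)]$ therefore intertwines the actions up to this sign, and the map $-\cap[\mathbf{T}(W)]\colon H^{n-i}(\mathbf{T}(W),\Z)\stackrel{\sim}{\longrightarrow}H_i(\mathbf{T}(W),\Z)$ becomes a $\Z[W]$-isomorphism once the target is twisted by $\varepsilon$. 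Substituting the universal coefficient identification then produces the stated isomorphism relating $H_{n-i}^\vee$ and $H_i^\vee$ up to the twist by $\varepsilon$.

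The conceptual content here is entirely standard; the real work is the bookkeeping of variances. The step I expect to be most delicate is tracking the orientation twist through the two dualizations — the one coming from Poincaré duality and the one coming from universal coefficients — and verifying that the left $\Z[W]$-module structures on $H^{n-i}$, on its $\Z$-dual, and on the cap-product target are compatible, so that the factor $\varepsilon$ lands on the correct side. This is where one genuinely uses that $\varepsilon$ is self-dual and that the homology modules are $\Z$-free (hence reflexive), which is precisely why torsion-freeness must be secured beforehand. The surface case $n=2$ is a convenient sanity check: there the assertion amounts to the $W$-equivariance of the intersection pairing on $H_1$ of the surface, which takes values in $\varepsilon$ and is unimodular by torsion-freeness. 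A secondary, and the only genuinely non-formal, ingredient is the identification $H_1\simeq Q^{\mathrm{ab}}$ together with its freeness in types $H_3$ and $H_4$, which ultimately rests on the presentation of $Q$ obtained in Theorem \ref{presentationQ}.
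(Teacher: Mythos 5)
Your proposal is correct and follows essentially the same route as the paper: the same Poincar\'e duality/universal coefficients bootstrap $\mathrm{Tors}(H_i)\simeq\mathrm{Tors}(H_{n-i-1})$ reducing everything to $\mathrm{Tors}(H_1)=0$, settled for surfaces trivially and for $H_3$, $H_4$ by Corollary \ref{abelianizationofQ} via $H_1\simeq Q^{\mathrm{ab}}$, and then the $\varepsilon$-twisted equivariance of the duality pairing (the fact that $H_n=\varepsilon$ from Proposition \ref{H0andHn}) combined with the universal coefficient theorem to get the stated $\Z[W]$-isomorphism. Your cap-product formulation is just a more explicitly bookkept version of the paper's appeal to the Poincar\'e pairing valued in $H_n=\varepsilon$.
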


\begin{proof}
The first statement, together with the Poincar\'e map $H^i(\mathbf{T}(W)) \otimes H_{n-i}(\mathbf{T}(W)) \to \varepsilon$ and the universal coefficients theorem, imply the second one. If $W$ is a Weyl group, then $\mathbf{T}(W)\simeq T$ is a torus and the statement follows from the isomorphism $\Lambda^*(X(T))\stackrel{\tiny{\sim}}\to H^*(T;\Z)$ proven in the Remark \ref{lambda_homo}. In the non-crystallographic case, since $\dim\mathbf{T}(W)\le4$, we only have to see that $H_1(\mathbf{T}(W);\Z)$ is torsion-free, which is true by Corollaries \ref{abelianizationofQ} and \ref{QforI_2}.
\end{proof}

The above Proposition, combined with the \emph{Hopf trace formula} (see \cite[Chap. 4, \S 7, Theorem 6]{spanier} or \cite[Lemma 2.4]{lin_lefschetz}), provides enough information to determine the homology representation of $\mathbf{T}(W)$. Let $G$ be a (discrete) group, $H\le G$ a subgroup and $M$ be an $H$-module. We denote by $M{\uparrow}_H^G$ the \emph{induced module} of $M$; it is a $G$-module. Similarly, the \emph{restricted module} of a $G$-module $N$ is denoted $N{\downarrow}_H^G$. Observe that we have a canonical isomorphism of $\Q[G]$-modules $\Q[G/H]\simeq\mathds{1}{\uparrow}_H^G$.

In our context, we have isomorphisms of $\Q[\widehat{W}]$-modules
\[C_k^\mathrm{cell}(\widehat{\Sigma},\widehat{W};\Q)=\bigoplus_{I\subset\widehat{S}~;~|I|=n-k}\mathds{1}{\uparrow}_{\widehat{W}_I}^{\widehat{W}}.\]
Thus 
\[C_k^\mathrm{cell}(\mathbf{T}(W),W;\Q)=\mathrm{Def}^{\widehat{W}}_W(C_k^\mathrm{cell}(\widehat{\Sigma},\widehat{W};\Q))=\bigoplus_{I\subset\widehat{S}~;~|I|=n-k}\mathds{1}{\uparrow}_{\pi(\widehat{W}_I)}^W,\]
and applying Hopf's formula yields the following result:
\begin{lem}\label{hopfforW}
We have the following equality of virtual rational characters of $W$
\[\sum_{I\subsetneq\widehat{S}}(-1)^{|I|}\mathds{1}{\uparrow}_{\pi(\widehat{W}_I)}^W=(-1)^n\sum_{0\le i \le n}(-1)^iH_i(\mathbf{T}(W);\Q).\]
\end{lem}

We use the conventions of \cite{geck-pfeiffer} to denote the irreducible characters of $W$.

\begin{theo}\label{homologyI2}
Let $m\ge 3$. Following \cite[\S 5.3.4]{geck-pfeiffer}, for $1\le j \le \lfloor (m-1)/2\rfloor$, we consider the following representation of $I_2(m)=\left<s,t~|~s^2=t^2=(st)^m=1\right>$
\[\widetilde{\rho_j} : I_2(m) \to GL_2(\R)~~\text{defined by}~~\widetilde{\rho_j}(s):=\begin{pmatrix}0 & 1 \\ 1 & 0\end{pmatrix}~~\text{and}~~\widetilde{\rho_j}(st):=\begin{pmatrix} \cos(j\theta_m) & -\sin(j\theta_m) \\ \sin(j\theta_m) & \cos(j\theta_m)\end{pmatrix},\]
where $\theta_m:=2\pi/m$. Let $\rho_j$ be a realization of $\widetilde{\rho_j}$ on the splitting field $\Q(\theta_m)$ of $I_2(m)$.

Then, the first homology representation of $\mathbf{T}(I_2(m))$ is given by
\[H_1(\mathbf{T}(I_2(m));\Q(\theta_m))=\left\{\begin{array}{cc}
\displaystyle{\bigoplus_{1\le j \le (m-1)/2}\rho_j} & \text{if $m$ is odd}, \\[.5em]
\displaystyle{\bigoplus_{\substack{1\le j \le m/2-1 \\ j~\text{odd}}}\rho_{j}} & \text{if $m$ is even}.\end{array}\right.\]

In particular, this representation is irreducible exactly when $m\in\{3,4,6\}$.
\end{theo}

\begin{proof}
Let $\chi_j:=\mathrm{tr}(\rho_j)$ and, denoting by $\mathrm{Reg}_{\Q(\theta_m)}=\Q(\theta_m)[I_2(m)]$ the regular module, the Lemma \ref{hopfforW} yields the following equality of virtual characters of $I_2(m)$
\[H_1(\mathbf{T}(I_2(m));\Q(\theta_m))=\mathds{1}+\varepsilon-\mathrm{Reg}_{\Q(\theta_m)}-\sum_{\emptyset\ne I\subsetneq\{\widehat{s}_0,s,t\}}(-1)^{|I|}\mathds{1}{\uparrow}_{\pi(\widehat{I_2(m)}_I)}^{I_2(m)}.\]
We deal with each case separately. Recall the images in $I_2(m)$ of the parabolic subgroups of $\widehat{I_2(m)}$ from the proof of the Corollary \ref{T(I2(m))hyperbolicsurface}. Denote $a:=st$ and $r:=r_W=a^{\lfloor(m-1)/2\rfloor}s$.
\begin{itemize}[leftmargin=*]
\item If $m=2k+1$, then $r=a^ks$ and  $s^{r}=t$, $t^{r}=s$, so $I_2(m)=\left<s,r\right>=\left<t,r\right>$. We also have (cf \cite[\S 5.3.4]{geck-pfeiffer}) $\mathrm{Reg}_{\Q(\theta_m)}=\mathds{1}+\varepsilon+\sum_j 2\chi_j$ and $\mathds{1}{\uparrow}_{\left<s\right>}^{I_2(m)}=\mathds{1}+\sum_j\chi_j$ (\cite[\S 6.3.5]{geck-pfeiffer}), so the formula reduces to
\[H_1(\mathbf{T}(I_2(m));\Q(\theta_m))=3\cdot\mathds{1}{\uparrow}_{\left<s\right>}^{I_2(m)}-3\cdot\mathds{1}-\sum_j2\chi_j=\sum_j\chi_j.\]
\item If $m=4k$, then $r=a^{2k-1}s$ and the conjugacy classes of $I_2(m)$ are as follows
\begin{center}
\begin{tabular}{|c||c|c|c|c|c|c|c|c|}
\hline
Representative & $1$ & $a$ & $a^2$ & $\cdots$ & $a^{m/2-1}$ & $a^{m/2}$ & $s$ & $t$ \\
\hline
Cardinality & $1$ & $2$ & $2$ & $\cdots$ & $2$ & $1$ & $m/2$ & $m/2$ \\
\hline
\end{tabular}
\end{center}

First, we determine the characters $\mathds{1}{\uparrow}_{\left<x,r\right>}^{I_2(m)}$ for $x=s,t$. In the proof of \ref{T(I2(m))hyperbolicsurface} we have seen that $t=(sr)^{2k-1}s$, so $\left<s,r\right>=I_2(m)$. Next, as detailed in \cite[\S 5.3.4]{geck-pfeiffer}, the character $\chi_j$ is given by $\chi_j(a^i)=2\cos(2ij\pi/m)$ and $\chi_j(sa^i)=0$. We have $\left<t,r\right>=\{1,t,r,a^{2k}\}\simeq C_2\times C_2$ and by Frobenius reciprocity
\[\forall j,~\left(\mathds{1}{\uparrow}_{\left<t,r\right>}^{I_2(m)},\chi_j\right)_W=\left(\mathds{1},\chi_j{\downarrow}_{\left<t,r\right>}^{I_2(m)}\right)_{\left<t,r\right>}=\frac{\chi_j(1)+\chi_j(t)+\chi_j(r)+\chi_j(a^{2k})}{4}\]
\[=\frac{\chi_j(1)+2\chi_j(t)+\chi_j(a^{2k})}{4}=\frac{1+\cos(\pi j)}{2}=\left\{\begin{array}{cc}
1 & \text{if $j$ is even} \\ 0 & \text{otherwise}.\end{array}\right.\]
The 1-dimensional irreducible representations of $I_2(m)$ other that $\mathds{1}$ and $\varepsilon$ are given by $\varepsilon_s(s)=\varepsilon_t(t)=1$ and $\varepsilon_s(t)=\varepsilon_t(s)=-1$. Therefore, $\mathrm{Reg}_{\Q(\theta_m)}=\mathds{1}+\varepsilon+\varepsilon_s+\varepsilon_t+\sum_j2\chi_j$ and by Frobenius reciprocity,
\[\left(\mathds{1}{\uparrow}_{\left<t,r\right>}^{I_2(m)},\varepsilon_s\right)_{I_2(m)}=\left(\mathds{1}{\uparrow}_{\left<t,r\right>}^{I_2(m)},\varepsilon\right)_{I_2(m)}=0~~\text{and}~~\left(\mathds{1}{\uparrow}_{\left<t,r\right>}^{I_2(m)},\varepsilon_t\right)_{I_2(m)}=\left(\mathds{1}{\uparrow}_{\left<t,r\right>}^{I_2(m)},\mathds{1}\right)_{I_2(m)}=1\]
and hence $\mathds{1}{\uparrow}_{\left<t,r\right>}^{I_2(m)}=\mathds{1}+\varepsilon_t+\sum_{j~\text{even}}\chi_j$. On the other hand, by \cite[\S 6.3.5]{geck-pfeiffer}, we have $\mathds{1}{\uparrow}_{\left<s\right>}^{I_2(m)}=\mathds{1}+\varepsilon_s+\sum_j\chi_j$ and $\mathds{1}{\uparrow}_{\left<t\right>}^{I_2(m)}=\mathds{1}+\varepsilon_t+\sum_j\chi_j$. We finally get
\[H_1(\mathbf{T}(I_2(m)),\Q(\theta_m))=\varepsilon-\mathrm{Reg}_{\Q(\theta_m)}+\mathds{1}{\uparrow}_{\left<s\right>}^{I_2(m)}+2\cdot\mathds{1}{\uparrow}_{\left<t\right>}^{I_2(m)}-\mathds{1}{\uparrow}_{\left<t,r\right>}^{I_2(m)}-\mathds{1}=\sum_{j~\text{odd}}\chi_j.\]
\item If $m=4k+2$, then $r=a^{2k}s$, we find $\mathds{1}{\uparrow}_{\left<s\right>}^{I_2(m)}$ and $\mathds{1}{\uparrow}_{\left<t\right>}^{I_2(m)}$ as above and compute
\[\left(\mathds{1}{\uparrow}_{\left<s,r\right>}^{I_2(m)},\varepsilon_s\right)_{I_2(m)}=\left(\mathds{1},\varepsilon_s{\downarrow}_{\left<s,r\right>}^{I_2(m)}\right)_{\left<s,r\right>}=1=\left(\mathds{1}{\uparrow}_{\left<s,r\right>}^{I_2(m)},\mathds{1}\right)_{I_2(m)}\]
but since $\deg(\mathds{1}{\uparrow}_{\left<s,r\right>}^{I_2(m)})=[I_2(m):\left<s,r\right>]=2$ this gives $\mathds{1}{\uparrow}_{\left<s,r\right>}^{I_2(m)}=\mathds{1}+\varepsilon_s$. Now, we have $\left<t,r\right>=\{1,t,r,a^{2k+1}\}\simeq C_2\times C_2$ and using again the Frobenius reciprocity,
\[\left(\mathds{1}{\uparrow}_{\left<t,r\right>}^{I_2(m)},\chi_j\right)_{I_2(m)}=\left(\mathds{1},\chi_j{\downarrow}_{\left<t,r\right>}^{I_2(m)}\right)_{\left<t,r\right>}=\frac{\chi_j(1)+\chi_j(t)+\chi_j(r)+\chi_j(a^{2k+1})}{4}=\frac{1+\cos(\pi j)}{2}.\]
Since ${\varepsilon_s}{\downarrow}_{\left<t,r\right>}^{I_2(m)}\ne{\varepsilon_t}\ne\mathds{1}{\downarrow}_{\left<t,r\right>}^{I_2(m)}$ we also get
\[\left(\mathds{1}{\uparrow}_{\left<t,r\right>}^{I_2(m)},\varepsilon_s\right)_{I_2(m)}=\left(\mathds{1}{\uparrow}_{\left<t,r\right>}^{I_2(m)},\varepsilon_t\right)_{I_2(m)}=\left(\mathds{1}{\uparrow}_{\left<t,r\right>}^{I_2(m)},\varepsilon\right)_{I_2(m)}=0,\]
so $\mathds{1}{\uparrow}_{\left<t,r\right>}^{I_2(m)}=\mathds{1}+\sum_{j~\text{even}}\chi_j$ and we conclude that
\[H_1(\mathbf{T}(I_2(m));\Q(\theta_m))=\varepsilon-\mathrm{Reg}_{\Q(\theta_m)}+2\cdot\mathds{1}{\uparrow}_{\left<s\right>}^{I_2(m)}+\mathds{1}{\uparrow}_{\left<t\right>}^{I_2(m)}-\mathds{1}{\uparrow}_{\left<s,r\right>}^{I_2(m)}-\mathds{1}{\uparrow}_{\left<t,r\right>}^{I_2(m)}=\sum_{j~\text{odd}}\chi_j.\]
\end{itemize}
\end{proof}

\begin{theo}\label{homologyH3}
With the notation of \cite[Appendix C, Table C.1]{geck-pfeiffer}, we have
\[\forall 0\le i\le 3,~H_i(\mathbf{T}(H_3);\Q(\sqrt{5}))=\left\{\begin{array}{ccc}
\mathds{1} & \text{if} & i=0, \\[.5em]
3_s'\oplus\overline{3'_s}\oplus5_r & \text{if} & i=1, \\[.5em]
3_s\oplus\overline{3_s}\oplus5'_r & \text{if} & i=2, \\[.5em]
\varepsilon & \text{if} & i=3. \end{array}\right.\]
\end{theo}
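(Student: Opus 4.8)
The plan is to follow the same strategy used for $I_2(m)$ in Theorem~\ref{homologyI2}: pin down $H_0$ and $H_n$ from Proposition~\ref{H0andHn}, use Poincaré duality to reduce the computation to the single group $H_1$, and then extract the remaining character from the Hopf trace formula. First I would record that Proposition~\ref{H0andHn} gives $H_0(\mathbf{T}(H_3),\Q(\sqrt5))=\mathds{1}$ and $H_3(\mathbf{T}(H_3),\Q(\sqrt5))=\varepsilon$. Since $\Q(\sqrt5)\subset\R$ is a splitting field for $H_3$, every irreducible has real-valued character and is therefore self-dual, so Proposition~\ref{PD} yields the isomorphism of $\Q(\sqrt5)[H_3]$-modules
\[H_2(\mathbf{T}(H_3),\Q(\sqrt5))\simeq H_1(\mathbf{T}(H_3),\Q(\sqrt5))\otimes\varepsilon.\]
Thus it suffices to determine $H_1$, after which $H_2$ is obtained by twisting with the sign character. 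From the character table \cite[Appendix C, Table C.1]{geck-pfeiffer} one checks that tensoring by $\varepsilon$ fixes no irreducible and pairs them as $\{\mathds1,\varepsilon\}$, $\{4,4'\}$, $\{3_s,3'_s\}$, $\{\overline{3_s},\overline{3'_s}\}$ and $\{5_r,5'_r\}$; this is exactly what turns the answer for $H_1$ into the stated answer for $H_2$.

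The core computation is the left-hand side of Lemma~\ref{hopfforW} for $\widehat{W}=\widehat{H_3}$, whose diagram (Table~\ref{hypextdiags}) is the $4$-cycle on nodes $1,2,3,0$ with bonds $m_{12}=m_{30}=5$, $m_{23}=m_{01}=3$ and $m_{13}=m_{20}=2$. I would enumerate the $15$ proper subsets $I\subsetneq\widehat S$, grouped by $|I|$, and in each case identify the image $\pi(\widehat W_I)\le H_3$ and decompose the permutation character $\mathds1{\uparrow}_{\pi(\widehat W_I)}^{H_3}$ via Frobenius reciprocity. For $|I|=0$ this is the regular representation; for $|I|=1$ all four rank-one parabolics map to copies of $C_2$ generated by reflections, which are all conjugate in $H_3$, giving a single induced character; for $|I|=3$ the proof of Lemma~\ref{Qinterpara=1} already shows that each maximal parabolic maps isomorphically onto $H_3$, contributing $\mathds1$ each. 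The genuinely new work is the six rank-two parabolics: each lies in a maximal parabolic, so $\pi$ is injective on it, and one finds two copies of $I_2(5)$ (namely $\left<s_1,s_2\right>$ and $\left<s_3,r_W\right>$), two of $A_2$ ($\left<s_2,s_3\right>$ and $\left<r_W,s_1\right>$) and two of $A_1\times A_1$ ($\left<s_1,s_3\right>$ and $\left<s_2,r_W\right>$), the nonstandard reflection subgroups being identified through their Dyer generators as in \cite[Theorem~3.3]{dyer90}. Forming the alternating sum and using that it equals $-H_0+H_1-H_2+H_3$ then produces the virtual character $H_1-H_2=(\text{parabolic sum})+\mathds1-\varepsilon$. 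I expect this bookkeeping — in particular the decomposition of the rank-two permutation characters of $H_3$ over $\Q(\sqrt5)$ — to be the main obstacle, though it is entirely mechanical.

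Finally I would solve for $H_1$. Being a difference $H_1-H_1\otimes\varepsilon$, the virtual character just computed is $\varepsilon$-anti-invariant, so it records precisely the differences $a_\chi-a_{\chi\otimes\varepsilon}$ of the multiplicities of $H_1$ across each of the five $\varepsilon$-pairs. These differences, together with the requirements that every multiplicity be a nonnegative integer and that $\dim_{\Q(\sqrt5)}H_1=11$, determine $H_1$ uniquely. The dimension constraint comes from Corollary~\ref{abelianizationofQ}, which gives $H_1\simeq Q^{\mathrm{ab}}\simeq\Z^{11}$; it is genuinely needed, as it cannot be recovered from the Euler characteristic alone ($\chi(\mathbf{T}(H_3))=0$ is forced by $b_0=b_3=1$ and $b_1=b_2$ regardless of $b_1$). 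Writing the general nonnegative solution of the difference equations, the total dimension equals $11+2x_0+6x_1+6x_2+8x_3+10x_4$ for nonnegative integers $x_0,\dots,x_4$ indexing the five pairs, so $\dim H_1=11$ forces all $x_i=0$ and hence $H_1=3'_s\oplus\overline{3'_s}\oplus5_r$. Twisting by $\varepsilon$ then gives $H_2=3_s\oplus\overline{3_s}\oplus5'_r$, which completes the proof.
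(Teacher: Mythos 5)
Your proposal is correct and follows essentially the same route as the paper's proof: both apply the Hopf trace formula (Lemma \ref{hopfforW}) to the alternating sum of permutation characters $\mathds{1}{\uparrow}_{\pi(\widehat{H_3}_I)}^{H_3}$ over proper parabolics, and then pin down $H_1$ using the rank-$11$ fact from Corollary \ref{abelianizationofQ} together with the pairing of irreducibles under tensoring by $\varepsilon$. Your explicit identification of the parabolic images and the non-negativity/dimension bookkeeping $11+2x_0+6x_1+6x_2+8x_3+10x_4$ is just a more carefully spelled-out version of the paper's concluding dimension count, so no substantive difference remains.
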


\begin{proof}
Consider the virtual character $\chi_H:=\sum_{I\subsetneq\widehat{S}}(-1)^{|I|+1}\mathds{1}{\uparrow}_{\pi(\widehat{H_3}_I)}^{H_3}$. For $\chi\in\mathrm{Irr}(H_3)$, we compute $(\chi_H,\chi)_{H_3}$ using Frobenius reciprocity and we obtain
\[\chi_H=\varepsilon-\mathds{1}-3_s-\overline{3_s}+3_s'+\overline{3'_s}+5_r-5_r'.\]
Therefore, using the Lemma \ref{hopfforW}
\[H_2(\mathbf{T}(H_3))-H_1(\mathbf{T}(H_3))=\chi_H+\mathds{1}-\varepsilon=-3_s-\overline{3_s}+3_s'+\overline{3_s'}+5_r-5'_r.\]
By the Lemma \ref{abelianizationofQ}, we have $\dim(H_1(\mathbf{T}(H_3)))=\dim(H_2(\mathbf{T}(H_3)))=11$.
\end{proof}

\begin{theo}\label{homologyH4}
With the notation of \cite[Appendix C, Table C.2]{geck-pfeiffer}, we have
\[\forall 0\le i \le 4,~H_i(\mathbf{T}(H_4);\Q(\sqrt{5}))=\left\{\begin{array}{ccc}
\mathds{1} & \text{if} & i=0, \\[.5em]
4_t\oplus\overline{4_t}\oplus16_r' & \text{if} & i=1, \\[.5em]
6_s\oplus\overline{6_s}\oplus30_s\oplus\overline{30_s} & \text{if} & i=2, \\[.5em]
4_t'\oplus\overline{4_t'}\oplus16_r & \text{if} & i=3, \\[.5em]
\varepsilon & \text{if} & i=4. \end{array}\right.\]
\end{theo}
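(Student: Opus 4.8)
The plan is to reproduce, one homological degree higher, the argument used for Theorem \ref{homologyH3}. By Proposition \ref{H0andHn} the extreme groups are already known, $H_0(\mathbf{T}(H_4),\Q(\sqrt5))=\mathds1$ and $H_4(\mathbf{T}(H_4),\Q(\sqrt5))=\varepsilon$; and by Proposition \ref{PD}, Poincaré duality together with the self-duality of the (rational over $\Q(\sqrt5)$) representations of $H_4$ yields $\Z[H_4]$-isomorphisms $H_3\simeq H_1\otimes\varepsilon$ and $H_2\simeq H_2\otimes\varepsilon$. Thus the whole problem reduces to pinning down $H_1$ and the $\varepsilon$-self-dual module $H_2$.

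The engine is Lemma \ref{hopfforW}. With $n=4$ it gives the equality of virtual characters of $H_4$
\[L:=\sum_{I\subsetneq\widehat{S}}(-1)^{|I|}\mathds1{\uparrow}_{\pi(\widehat{H_4}_I)}^{H_4}=H_0-H_1+H_2-H_3+H_4,\]
hence, after substituting the pieces above,
\[H_2-H_1-H_1\otimes\varepsilon=L-\mathds1-\varepsilon.\]
First I would compute $L$ completely. For each of the proper subsets $I\subsetneq\widehat{S}$ one determines the image $\pi(\widehat{H_4}_I)\le H_4$ and decomposes the induced character $\mathds1{\uparrow}_{\pi(\widehat{H_4}_I)}^{H_4}$ into irreducibles by Frobenius reciprocity against \cite[Appendix C, Table C.2]{geck-pfeiffer}. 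When $\widehat{s}_0\notin I$ the subgroup $\widehat{H_4}_I$ sits inside the standard copy of $W=H_4$ and $\pi$ restricts to the identity, so $\pi(\widehat{H_4}_I)=W_I$ is an ordinary standard parabolic; when $\widehat{s}_0\in I$ the image is found exactly as in the proof of Lemma \ref{Qinterpara=1}, by writing the relevant reflections as words in $r_W$ and the $s_i$ and reading off the Dyer generators (\cite[Theorem 3.3]{dyer90}) of the reflection subgroup. This bookkeeping is purely mechanical and is best delegated to \cite{GAP4}.

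With $L$ in hand the decomposition is essentially forced. I expect the positive and negative parts of $H_2-H_1-H_1\otimes\varepsilon=L-\mathds1-\varepsilon$ to have disjoint support: the positive part should be $H_2=6_s\oplus\overline{6_s}\oplus30_s\oplus\overline{30_s}$, which is visibly stable under $\otimes\varepsilon$ (consistent with $H_2\simeq H_2\otimes\varepsilon$) and of dimension $72$, in agreement with $\chi(\mathbf{T}(H_4))=1-24+b_2-24+1=26$; and the negative part should be $H_1\oplus(H_1\otimes\varepsilon)$, supported on $\{4_t,\overline{4_t},16_r',4_t',\overline{4_t'},16_r\}$. The one remaining indeterminacy is the $\varepsilon$-twist in $H_1$: the two candidate halves $4_t\oplus\overline{4_t}\oplus16_r'$ and its twist $4_t'\oplus\overline{4_t'}\oplus16_r$ both have dimension $24$, matching $H_1\simeq Q^{\mathrm{ab}}\otimes\Q(\sqrt5)\simeq\Q(\sqrt5)^{24}$ from Corollary \ref{abelianizationofQ}, so neither dimension nor the Hopf identity separates them.

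To break this symmetry I would invoke that the geometric (reflection) representation of $H_4$ is a direct summand of $H_1$: its character takes the value $\dim-2=2$ on reflections, which (being $+2$ rather than the $-2$ of its $\varepsilon$-twist) identifies it as $4_t$ in the Geck--Pfeiffer labelling. Hence $4_t$ occurs in $H_1$, forcing $H_1=4_t\oplus\overline{4_t}\oplus16_r'$ and, by duality, $H_3=H_1\otimes\varepsilon=4_t'\oplus\overline{4_t'}\oplus16_r$; alternatively one may cross-check this by reading the $W$-module structure of $Q^{\mathrm{ab}}$ directly off the conjugation action of $W\le\widehat{W}=Q\rtimes W$ on the presentation of Corollary \ref{abelianizationofQ} with \cite{GAP4}. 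The main obstacle is the first step: the tabulation of the images $\pi(\widehat{H_4}_I)$ over all proper subsets and the ensuing character inductions are lengthy and error-prone, which is exactly why the computation is carried out on the machine; the only conceptual subtlety is the twist ambiguity, settled by the geometric-summand input rather than by the Hopf formula alone.
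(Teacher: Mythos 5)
Your reduction steps are exactly the paper's: Proposition \ref{H0andHn} for the extreme degrees, Proposition \ref{PD} for the duality $H_3\simeq H_1\otimes\varepsilon$, Lemma \ref{hopfforW} plus Frobenius reciprocity to get the virtual character, and the dimension count $\dim H_1=\dim H_3=24$ from Corollary \ref{abelianizationofQ} to split off $H_2=6_s\oplus\overline{6_s}\oplus30_s\oplus\overline{30_s}$ and $H_1\oplus H_3=4_t\oplus\overline{4_t}\oplus4_t'\oplus\overline{4_t'}\oplus16_r\oplus16_r'$. All of that is sound. The gap is in your symmetry-breaking step: the statement that the geometric representation of $H_4$ is a direct summand of $H_1(\mathbf{T}(H_4),\Q(\sqrt5))$ is precisely (part of) Corollary \ref{euler+poincare=love}, which the paper \emph{deduces from} Theorems \ref{homologyI2}, \ref{homologyH3} and \ref{homologyH4} by inspecting the computed answers. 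No independent proof of it exists at this point in the paper, and none is offered in your proposal (there is no a priori map relating $Q^{\mathrm{ab}}$ to the reflection representation in the hyperbolic case, unlike the affine case where $Q\simeq Q^\vee\subset V^*$). So your primary argument for selecting $4_t\oplus\overline{4_t}\oplus16_r'$ over its $\varepsilon$-twist is circular.

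The paper breaks the tie differently, and more cheaply: $H_1(\mathbf{T}(H_4),\Q)$ is a sub-quotient of
\[C_1^\mathrm{cell}(\mathbf{T}(H_4),H_4;\Q)=\bigoplus_{\substack{I\subsetneq\widehat{S} \\ |I|=3}}\mathds{1}{\uparrow}_{\pi(\widehat{H_4}_I)}^{H_4},\]
and the inner products $\left(C_1^\mathrm{cell},16_r\right)_{H_4}=\left(C_1^\mathrm{cell},4_t'\right)_{H_4}=0$ (computed from the same induced characters you already tabulated for the Hopf formula) force $\left(H_1,16_r\right)=\left(H_1,4_t'\right)=0$, leaving only the stated column. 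Your fallback --- computing the conjugation action of $W$ on $Q^{\mathrm{ab}}$ from the explicit presentation of Corollary \ref{abelianizationofQ} with GAP --- would be a legitimate, non-circular repair, but as written it is only flagged as a ``cross-check'' and never carried out, and it is considerably heavier than the sub-quotient argument. To make your proof complete, either promote that computation to the main argument, or replace the geometric-summand claim by the paper's observation that the wrong candidates simply do not occur in $C_1^\mathrm{cell}$.
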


\begin{proof}
As for the previous proof, we let $\chi_H:=\sum_{I\subsetneq\widehat{S}}(-1)^{|I|}\mathds{1}{\uparrow}_{\pi(\widehat{H_4}_I)}^{H_4}$ and show that
\[\chi_H=\mathds{1}+\varepsilon-4_t-\overline{4_t}-4_t'-\overline{4_t'}+6_s+\overline{6_s}-16_r-16'_r+30_s+\overline{30_s}.\]
Since $\dim(H_1(\mathbf{T}(H_4)))=\dim(H_3(\mathbf{T}(H_4)))=24$ we obtain
\[H_2(\mathbf{T}(H_4))=30_s+\overline{30_s}+6_s+\overline{6_s}~~\text{and}~~H_1(\mathbf{T}(H_4))+H_3(\mathbf{T}(H_4))=4_t+4_t'+\overline{4_t}+\overline{4_t'}+16_r+16_r'.\]
Since the $\Q[H_4]$-module $H_1(\mathbf{T}(H_4),\Q)$ is a sub-quotient of the module
\[C_1^\mathrm{cell}(\mathbf{T}(H_4),H_4;\Q)=\sum_{I\subset\widehat{S}~;~|I|=3}\mathds{1}{\uparrow}_{\pi(\widehat{H_4}_I)}^{H_4}\]
and since
\[\left(C_1^\mathrm{cell}(\mathbf{T}(H_4)),16_r\right)_{H_4}=\left(C_1^\mathrm{cell}(\mathbf{T}(H_4)),4_t'\right)_{H_4}=0,\]
we get $(H_1(\mathbf{T}(H_4)),16_r)_{H_4}=(H_1(\mathbf{T}(H_4)),4_t')_{H_4}=0$. Because $H_1(\mathbf{T}(H_4))$ is rational, this implies that $H_1(\mathbf{T}(H_4))=4_t+\overline{4_t}+16_r'$.
\end{proof}

\begin{rem}
In \cite[\S 3]{ratcliffe-tschantz} and \cite[\S 2.2]{martelli_hyperbolic}, the homology of $\mathbf{T}(H_4)$ is also described, but only as a $\Z$-module. 
\end{rem}

Finally, we exhibit another algebraic meaning of the Euler characteristic of $\mathbf{T}(W)$. The Poincar\'e series of $\widehat{I_2(m)}$, $\widehat{H_3}$ and $\widehat{H_4}$ can be found in \cite[\S 3.1, Table 7.4 and Table 7.5]{chapovalov}. Using these expressions, we immediately obtain the following corollary:

\begin{cor}\label{euler+poincare=love}
Let $W$ be a finite irreducible Coxeter group. If $W(q)$ (resp. $\widehat{W}(q)$) denotes the Poincar\'e series of $W$ (resp. of its extension $\widehat{W}$), then we have
\[\chi(\mathbf{T}(W))=\left.\frac{W(q)}{\widehat{W}(q)}\right|_{q=1}\]

Moreover, $H_1(\mathbf{T}(W),\Q(W))$ contains the geometric representation as a direct summand and is irreducible if and only if $W$ is crystallographic.
\end{cor}

\begin{rem}
The irreducibility of $H_1(\mathbf{T}(W);\Q(W))$ when $W$ is crystallographic just comes from the fact that in this case, if $T$ denotes a maximal torus in the simply-connected group of the same type as $W$ and if $Q^\vee$ is the associated coroot lattice, then 
\[H_1(\mathbf{T}(W);\Q(W))=H_1(T;\Q)\simeq Q^\vee\otimes_\Z\Q=\Q\Phi^\vee.\]
This is the (dual of the) geometric representation of the Weyl group $W$, hence is irreducible.

With \cite{chapovalov} it can be seen that the quotient $W(q)/\widehat{W}(q)$ is a polynomial in $q$, but we cannot hope for a generalization of the Bott factorization theorem \cite[Theorem 6.3]{hiller_coxeter}, as $H_4(q)/\widehat{H_4}(q)$ is irreducible of degree 60.
\end{rem}

\subsection*{Acknowledgments}
We are much grateful to the referees for their careful reading and very useful and accurate suggestions, which improved the paper by a lot. We also warmly thank Owen Garnier, Daniel Juteau and David Chataur for their very appreciated help and the exciting discussions.

\printbibliography

\end{document}